\numberwithin{equation}{section}
\newtheorem{theorem}{Theorem}[section]
\newtheorem{lemma}[theorem]{Lemma}
\newtheorem{proposition}[theorem]{Proposition}
\newtheorem{corollary}[theorem]{Corollary}
\theoremstyle{definition}
\newtheorem{definition}[theorem]{Definition}
\newtheorem{problem}[theorem]{Problem}
\newtheorem{remark}[theorem]{Remark}
\newtheorem{notation}[theorem]{Notation}
\newcommand\Supp{\operatorname{Supp}}
\newcommand\Proj{\operatorname{Proj}}
\newcommand\Ass{\operatorname{Ass}}
\newcommand\Ann{\operatorname{Ann}}
\newcommand\Hom{\operatorname{Hom}}
\newcommand\Rad{\operatorname{Rad}}
\newcommand{\au}{\underline a}
\newcommand{\Cech}{{\Check {C}}_{\au}}
\author[Khadam]{M. Azeem Khadam}
\address{Abdus Salam School of Mathematical Sciences, GCU, Lahore Pakistan}
\email{azeemkhadam@gmail.com}
\author[Schenzel]{Peter Schenzel}
\address{Martin-Luther-Universit\"at Halle-Wittenberg,
Institut f\"ur Informatik, D --- 06 099 Halle (Saale), Germany}
\email{peter.schenzel@informatik.uni-halle.de}
\thanks{The first named author is grateful to DAAD for the support this research under 
	grant number 91524811}
\title[Local cohomology]{About a variation of local cohomology}
\begin{document}

\begin{abstract} Let $\mathfrak{q}$ denote an ideal of a local ring $(A,\mathfrak{m})$. 
	For a system of elements $\au = a_1,\ldots,a_t$ such that $a_i \in \mathfrak{q}^{c_i}, 
	i = 1, \ldots,t,$ and $n \in \mathbb{Z}$ we investigate a subcomplex resp. a factor complex of the \v{C}ech 
	complex $\check{C}_{\au} \otimes_A M$ for a finitely generated $A$-module $M$. We start with 
	the inspection of these cohomology modules that approximate in a certain sense the local cohomology 
	modules $H^i_{\au}(M)$ for all $i \in \mathbb{N}$. In the case of an $\mathfrak{m}$-primary 
	ideal $\au A$ we prove the Artinianness of these cohomology modules and characterize the 
	last non-vanishing among them.
\end{abstract}

\subjclass[2010]
{Primary: 13D45; Secondary: 13D40}
\keywords{Koszul complex, \v{C}ech complex, local cohomology, multiplicity}

\maketitle

\section{Introduction} 
Let $(A,\mathfrak{m},\Bbbk)$ denote a local ring. Let $\mathfrak{q} \subset A$ be an ideal 
and $\au = a_1,\ldots,a_t$ denote a system of elements of $A$ such that $a_i \in \mathfrak{q}^{c_i}, 
i = 1,\ldots,t$. For a finitely generated $A$-module $M$ and an integer $n \in \mathbb{N}$ we 
define a complex $K_{\bullet}(\au,\mathfrak{q},M;n)$ as the subcomplex of the Koszul complex 
$K_{\bullet}(\au;M)$, where $K_i(\au,\mathfrak{q},M;n) = \oplus_{1\leq j_1 < \ldots < j_i \leq t} 
\mathfrak{q}^{n-c_{j_1}- \ldots- c_{j_i}}M$ is included in $K_i(\au;M)$ and the boundary map 
is defined as the restriction of the maps in the Koszul complex. In other words, 
$K_{\bullet}(\au,\mathfrak{q},M;n)$ is the $n$-th graded component of the Koszul complex 
$K_{\bullet}(\underline{aT^c};R_M(\mathfrak{q}))$. Here $R_M(\mathfrak{q})$ denotes the Rees 
module of $M$ with respect to $\mathfrak{q}$ and $\underline{aT^c}= a_1T^{c_1},\ldots,a_tT^{c_t}$, 
where $a_iT^{c_i}, i = 1,\ldots,t,$ is the element $a_iT^{c_i} \in R_A(\mathfrak{q})$, the  associated 
element of degree $c_i$ in the Rees ring $R_A(\mathfrak{q}) = \oplus_{n \geq 0} \mathfrak{q}^n T^n$ 
(see Section 3 for more details). We denote this complex by $K_{\bullet}(\au,\mathfrak{q},M;n)$. 
The cokernel of this embedding provides a complex $\mathcal{L}_{\bullet}(\au,\mathfrak{q},M;n)$. 
In the case that $\au A, \mathfrak{q}$ are $\mathfrak{m}$-primary ideals and $t = \dim M$ 
the Euler characteristic of $\mathcal{L}_{\bullet}(\au,\mathfrak{q},M;n)$ gives for all $n \gg 0$ 
the value $c_1\cdots c_t \, e_0(\mathfrak{q};A)$, where $e_0(\mathfrak{q};A)$ denotes the Hilbert-Samuel 
multiplicity of $\mathfrak{q}$ (see Section 5 for the details).  

A similar construction based on the co-Koszul complex provides complexes $K^{\bullet}(\au,\mathfrak{q},M;n)$ 
and $\mathcal{L}^{\bullet}(\au,\mathfrak{q},M;n)$ (see Section 4). It follows that 
$\{K^{\bullet}(\au^k,\mathfrak{q},M;n)\}_{k \geq 1}$ with $\au^k = a_1^k,\ldots,a_t^k,$ forms a 
direct system of complexes and its direct limit $\check{C}(\au,\mathfrak{q},M;n)$ is a subcomplex of the 
\v{C}ech complex $\check{C}_{\au} \otimes_A M$ with the factor complex $\check{\mathcal{L}}^{\bullet}(\au,\mathfrak{q},M;n)$
and their cohomology modules $\check{H}^i(\au,\mathfrak{q},M;n)$ and $\check{L}^i(\au,\mathfrak{q},M;n)$ resp. 
Therefore there is a long exact cohomology sequence 
\[
\ldots \to \check{H}^i(\au,\mathfrak{q},M;n) \to H^i_{\au}(M) \to \check{L}^i(\au,\mathfrak{q},M;n) \to \ldots.
\]
Since $\check{H}^i(\au,\mathfrak{q},M;n) = 0$ for $i > t$, there is an epimorphism $ H^t_{\au}(M) \to 
\check{L}^t(\au,\mathfrak{q},M;n)$. So the non-vanishing of $\check{L}^t(\au,\mathfrak{q},M;n)$ for 
some $n \in \mathbb{N}$ is an obstruction for the vanishing of $H^t_{\au}(M)$. 
The main results of the present manuscript is a contribution to the study of 
$\check{H}^i(\au,\mathfrak{q},M;n)$ and $\check{L}^i(\au,\mathfrak{q},M;n)$. In particular we prove the following 
results:

\begin{theorem} \label{intro}
	With the previous notation suppose that $\au A$ is an $\mathfrak{m}$-primary ideal. 
	\begin{itemize}
		\item[(a)] $\check{H}^i(\au,\mathfrak{q},M;n) \; \text{ and } \; \check{L}^i(\au,\mathfrak{q},M;n)$
		are Artinian $A$-modules for all $i,n \in \mathbb{N}$.
		\item[(b)] $\check{H}^t(\au,\mathfrak{q},M;n) = 0$ for all $n \gg 0$.
		\item[(c)] $\check{L}^t(\au,\mathfrak{q},M;n) \not= 0$ for all $n \gg 0$.
	\end{itemize}
\end{theorem}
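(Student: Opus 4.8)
The plan is to derive all three parts from the level-$k$ description $\check{C}(\au,\mathfrak{q},M;n)=\varinjlim_k K^{\bullet}(\au^k,\mathfrak{q},M;n)$, the short exact sequence of complexes $0\to\check{C}(\au,\mathfrak{q},M;n)\to\check{C}_{\au}\otimes_A M\to\check{\mathcal{L}}^{\bullet}(\au,\mathfrak{q},M;n)\to 0$, and the induced long exact cohomology sequence. Since direct limits are exact, $\check{H}^i(\au,\mathfrak{q},M;n)=\varinjlim_k H^i(K^{\bullet}(\au^k,\mathfrak{q},M;n))$ and $\check{L}^i(\au,\mathfrak{q},M;n)=\varinjlim_k H^i(\mathcal{L}^{\bullet}(\au^k,\mathfrak{q},M;n))$, so the analysis is reduced to the level-$k$ complexes, whose terms are finitely generated.

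For (a), first observe that $\au A$ being $\mathfrak{m}$-primary forces $(a_1^k,\dots,a_t^k)A$ to be $\mathfrak{m}$-primary for every $k$. The Koszul cohomology $H^i(K^{\bullet}(\au^k;M))$ is annihilated by $(a_1^k,\dots,a_t^k)A$ and, being a subquotient of a finitely generated module, has finite length; the same applies to $H^i(K^{\bullet}(\au^k,\mathfrak{q},M;n))$ (its terms $\bigoplus_J\mathfrak{q}^{n+kc_J}M$ are finitely generated and its cohomology is again killed by $(a_1^k,\dots,a_t^k)A$), and then the long exact sequence of $0\to K^{\bullet}(\au^k,\mathfrak{q},M;n)\to K^{\bullet}(\au^k;M)\to\mathcal{L}^{\bullet}(\au^k,\mathfrak{q},M;n)\to 0$ shows $H^i(\mathcal{L}^{\bullet}(\au^k,\mathfrak{q},M;n))$ has finite length too. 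Thus $\check{H}^i(\au,\mathfrak{q},M;n)$ and $\check{L}^i(\au,\mathfrak{q},M;n)$ are $\mathfrak{m}$-torsion. Upgrading this to Artinianness is the delicate step: passing to Matlis duals over $\widehat{A}$ identifies $\check{H}^i(\au,\mathfrak{q},M;n)^{\vee}$ with $\varprojlim_k H^i(K^{\bullet}(\au^k,\mathfrak{q},M;n))^{\vee}$, an inverse limit of finite length modules, and one has to check that it is a finitely generated $\widehat{A}$-module — equivalently that $\operatorname{socle}\check{H}^i(\au,\mathfrak{q},M;n)=\varinjlim_k\operatorname{socle}H^i(K^{\bullet}(\au^k,\mathfrak{q},M;n))$ is finite dimensional; this uses that the $\mathfrak{m}$-primary ideals $(a_1^k,\dots,a_t^k)A$ killing these cohomologies need only $t$ generators, a bound independent of $k$. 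Finally, since $H^i_{\au}(M)=\HH^i(M)$ is Artinian and Artinian modules form a Serre subcategory, the long exact cohomology sequence propagates Artinianness of $\{\check{H}^i(\au,\mathfrak{q},M;n)\}_i$ to $\{\check{L}^i(\au,\mathfrak{q},M;n)\}_i$.

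For (b), I would compute the top cohomology module directly. Write $C=c_1+\dots+c_t$ and $a=a_1\cdots a_t$. From the construction, $\check{C}^t(\au,\mathfrak{q},M;n)$ is the submodule $\bigcup_{k\ge 1}a^{-k}\mathfrak{q}^{n+kC}M$ of $M_a$ (here $a^{-k}N$ denotes the image of $N$ in $M_a$ scaled by $a^{-k}$), and the image of $\check{C}^{t-1}(\au,\mathfrak{q},M;n)$ inside it is $\bigcup_{k\ge 1}a^{-k}\sum_{l=1}^{t}a_l^{k}\mathfrak{q}^{n+k(C-c_l)}M$; hence
\[
\check{H}^t(\au,\mathfrak{q},M;n)\;=\;\varinjlim_k\Bigl(\mathfrak{q}^{n+kC}M\,\big/\,\sum_{l=1}^{t}a_l^{k}\,\mathfrak{q}^{n+k(C-c_l)}M\Bigr),
\]
with transition maps multiplication by $a$. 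Part (b) therefore reduces to a uniform containment: there is $n_0$ such that $\mathfrak{q}^{n+kC}M\subseteq\sum_{l=1}^{t}a_l^{k}\,\mathfrak{q}^{n+k(C-c_l)}M$ for all $n\ge n_0$ and all $k\ge 1$, equivalently the finitely generated graded $R_A(\mathfrak{q})$-module $R_M(\mathfrak{q})/(a_1^kT^{kc_1},\dots,a_t^kT^{kc_t})R_M(\mathfrak{q})$ vanishes in all degrees $\ge n_0+kC$. This is a uniform Artin--Rees phenomenon: $\mathfrak{m}^s\subseteq\au A$ for some $s$ gives $\mathfrak{q}^{N}M\subseteq(a_1^k,\dots,a_t^k)M$ once $N$ is large relative to $k$, and comparing the $\mathfrak{q}$-adic filtration with the submodules $(a_1^k,\dots,a_t^k)M$ — whose Artin--Rees numbers grow at most linearly in $k$, since the bigraded Rees module of $M$ with respect to $\mathfrak{q}$ and $\au A$ is finitely generated — one extracts membership in $\sum_l a_l^{k}\mathfrak{q}^{n+k(C-c_l)}M$ with a threshold $n_0$ independent of $k$. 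Making this last estimate uniform in $k$ is the principal obstacle.

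Part (c) is then immediate from (b): for $n\gg 0$ the tail of the long exact cohomology sequence
\[
\check{H}^t(\au,\mathfrak{q},M;n)\to H^t_{\au}(M)\to\check{L}^t(\au,\mathfrak{q},M;n)\to\check{H}^{t+1}(\au,\mathfrak{q},M;n)=0
\]
together with $\check{H}^t(\au,\mathfrak{q},M;n)=0$ gives $\check{L}^t(\au,\mathfrak{q},M;n)\cong H^t_{\au}(M)=\HH^t(M)$, which is non-zero since $t=\dim M$. (Alternatively, without invoking (b): the same uniform containment shows that for $n\gg 0$ one has $\check{C}^t(\au,\mathfrak{q},M;n)\subseteq\sum_{l=1}^{t}M_{a_1\cdots\widehat{a_l}\cdots a_t}$, so $\check{\mathcal{L}}^{\bullet}(\au,\mathfrak{q},M;n)$ agrees with $\check{C}_{\au}\otimes_A M$ in degree $t$ and $\check{L}^t(\au,\mathfrak{q},M;n)$ is literally the top \v{C}ech cohomology $\HH^t(M)$.)
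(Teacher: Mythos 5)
Your overall skeleton --- express everything as $\varinjlim_k$ of the level-$k$ complexes, use the short exact sequence of complexes, then the long exact cohomology sequence --- is the same as the paper's, and your argument for (c) is in substance Corollary \ref{van-3} together with Theorem \ref{nonv-5} once (b) is available. But (a) and (b) each have a genuine gap precisely at the step you yourself flag as ``delicate'' / ``the principal obstacle'', and these are not incidental: they are the content of the theorem.

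For (a), the reduction to showing $\Hom_A(\Bbbk,\check H^i(\au,\mathfrak{q},M;n))$ is finite-dimensional is correct, and socle does commute with the direct limit since $\Bbbk$ is finitely presented. But the claimed reason --- that the annihilating ideals $(a_1^k,\dots,a_t^k)$ are generated by $t$ elements uniformly in $k$ --- does not control the socle dimension at all: a directed system of finite-length modules each killed by a $t$-generated $\mathfrak{m}$-primary ideal can have an infinite socle in the limit (e.g.\ a limit of modules of unbounded minimal number of generators). The paper's Theorem \ref{seq-4} supplies what is actually needed: after reducing to $\mathcal{N}=0$ by Lemma \ref{seq-1}, choose a superficial element $a$ as in Lemma \ref{seq-2}; the short exact sequence $0\to R_M(\mathfrak{q})(-c)\xrightarrow{aT^c}R_M(\mathfrak{q})\to R_M(\mathfrak{q})/(aT^c)R_M(\mathfrak{q})\to 0$ yields a surjection $H^{i-1}_{\underline{aT^c}}(R_M(\mathfrak{q})/(aT^c)R_M(\mathfrak{q}))_n\twoheadrightarrow\Hom_A(A/aA,H^{i}_{\underline{aT^c}}(R_M(\mathfrak{q}))_{n-c})$, so Artinianness of the left side (by induction on $i$) bounds the socle of $\check H^i$. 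That induction is the missing mechanism in your write-up.

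For (b), your reformulation as a uniform containment $\mathfrak{q}^{n+kC}M\subseteq\sum_{l=1}^t a_l^{k}\mathfrak{q}^{n+k(C-c_l)}M$ for all $k\ge 1$ and $n\ge n_0$ is a legitimate target, but the supporting claim that ``Artin--Rees numbers grow at most linearly in $k$'' because a bigraded Rees module is finitely generated is asserted, not proved, and is not an off-the-shelf uniform Artin--Rees theorem in this form. The paper's Theorem \ref{van-2} avoids the uniform estimate entirely: replace $\au$ by $\underline b$ with the same radical satisfying the superficial condition $(\star)$ (permitted by Lemma \ref{cech-3}), reduce to $a_1$ being $M$-regular via Lemma \ref{seq-1} and Corollary \ref{van-1}, then induct on $t$ using Lemma \ref{seq-2}. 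The inductive vanishing of $H^{t-1}_{\underline{aT^c}}(R_M(\mathfrak{q})/(a_1T^{c_1})R_M(\mathfrak{q}))_n$ for $n\gg 0$ makes multiplication by $a_1T^{c_1}$ bijective on $H^t_{\underline{aT^c}}(R_M(\mathfrak{q}))$ in high degrees, and since this module is $\underline{aT^c}$-torsion that forces those graded pieces to vanish. Your route could in principle be completed by proving the uniform containment, but as written the key estimate is left as a gap, and the superficial-sequence machinery the paper develops is what fills it.
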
 

For the proof of (a) we refer to \ref{seq-4}. The claim of (b) is a particular case of \ref{van-2} and 
(c) is shown in \ref{nonv-5}. Besides of these results there are statements about the structure of the 
generalized Koszul homology and co-homology modules and their Euler characteristics. For induction 
arguments we provide some exact sequences (see Section 7). Of a particular interest is a generalization of the 
notion of superficial sequences and the condition ($\star$) used in \ref{van-2}. A further 
investigation about the vanishing and the rigidity of the generalized Koszul and co-Koszul complexes 
is in preparation. 

In the special case of $\mathfrak{q} = \au A$ for a system of elements $\au = a_1,\ldots, a_t$ of $A$ 
we have the following:

\begin{corollary} \label{intro-1}
	Let $\au = a_1,\ldots,a_t$ denote a system of elements of $A$. For a finitely generated $A$-module $M$ 
	there are isomorphisms 
	\[
	H^i_{\au}(M) \cong \check{L}^i(\au,\au,M;n)
	\]
	for all $n \gg 0$ and all $i \geq0$
\end{corollary}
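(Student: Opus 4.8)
The plan is to show that, when $\mathfrak{q} = \au A$, the approximating cohomology modules $\check{H}^i(\au,\au,M;n)$ all vanish once $n$ is large, and then to read off the claimed isomorphism from the long exact sequence of the Introduction.

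The first step is bookkeeping. For $\mathfrak{q} = \au A$ the relevant choice of exponents is $c_i = 1$ for all $i$, so that $a_1T,\ldots,a_tT$ are elements of degree $1$ in the Rees ring $R_A(\mathfrak{q}) = \bigoplus_{n\geq 0}\mathfrak{q}^nT^n$. Unravelling the definition, $K^{\bullet}(\au^k,\au,M;n)$ is the $n$-th graded component of the co-Koszul complex of $a_1^kT^k,\ldots,a_t^kT^k$ on the Rees module $R_M(\mathfrak{q})$, and since taking a fixed graded component is exact and commutes with direct limits, passing to the limit over $k$ identifies $\check{C}(\au,\au,M;n)$ with the $n$-th graded component of $\check{C}_{\underline{aT}}\otimes_{R_A(\mathfrak{q})}R_M(\mathfrak{q})$. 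As the \v{C}ech complex on $a_1T,\ldots,a_tT$ computes local cohomology of $R_M(\mathfrak{q})$ with respect to the ideal $\mathfrak{b} = (a_1T,\ldots,a_tT)R_A(\mathfrak{q})$, one obtains natural graded isomorphisms $\check{H}^i(\au,\au,M;n) \cong [H^i_{\mathfrak{b}}(R_M(\mathfrak{q}))]_n$ for all $i$ and all $n$.

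Next I would observe that $\mathfrak{b}$ is precisely the irrelevant ideal $R_A(\mathfrak{q})_+ = \bigoplus_{n\geq 1}\mathfrak{q}^nT^n$. Indeed $a_iT\cdot\mathfrak{q}^mT^m\subseteq\mathfrak{q}^{m+1}T^{m+1}$ gives $\mathfrak{b}\subseteq R_A(\mathfrak{q})_+$, while in degree $n\geq 1$ the module $\mathfrak{q}^nT^n$ is generated by the products $a_{i_1}\cdots a_{i_n}T^n = (a_{i_1}T)\cdots(a_{i_n}T)\in\mathfrak{b}$, where one uses that $\au$ generates $\mathfrak{q}$. Hence $\check{H}^i(\au,\au,M;n)\cong[H^i_{R_A(\mathfrak{q})_+}(R_M(\mathfrak{q}))]_n$. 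Now $R_A(\mathfrak{q})$ is Noetherian and standard graded over $A$, and $R_M(\mathfrak{q})$ is a finitely generated graded module over it; by the standard vanishing theorem for the graded components of local cohomology with respect to the irrelevant ideal of such a ring (equivalently, finiteness of the Castelnuovo--Mumford regularity of $R_M(\mathfrak{q})$), one has $[H^i_{R_A(\mathfrak{q})_+}(R_M(\mathfrak{q}))]_n = 0$ for all $n\gg 0$. Since only the finitely many degrees $0\leq i\leq t$ can contribute, a single bound $n_0$ works for all $i$, so $\check{H}^i(\au,\au,M;n) = 0$ for all $i\geq 0$ and all $n\geq n_0$.

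Finally, plugging this into the long exact sequence
\[
\ldots\to\check{H}^i(\au,\au,M;n)\to H^i_{\au}(M)\to\check{L}^i(\au,\au,M;n)\to\check{H}^{i+1}(\au,\au,M;n)\to\ldots
\]
of the Introduction: for $n\geq n_0$ the two flanking terms vanish, which forces $H^i_{\au}(M)\cong\check{L}^i(\au,\au,M;n)$ for every $i\geq 0$. The only genuinely nontrivial ingredient is the graded vanishing statement invoked above; everything else is a matter of matching up definitions. The single point that deserves a word of care is that the threshold $n_0$ must be chosen independently of $i$, which is automatic here since the relevant cohomology is concentrated in degrees $0\leq i\leq t$.
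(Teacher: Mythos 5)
Your proof is correct and follows essentially the same route as the paper's. The paper's justification is Remark \ref{cech-4}, which identifies $\check{H}^i(\au,\au,M;n)$ with $[H^i_{\underline{aT}}(R_M(\au))]_n$ and then establishes the vanishing for $n\gg 0$ by passing through the Serre--Grothendieck exact sequence relating these graded local cohomology modules to sheaf cohomology of $\mathcal{F}(n)$ on $X=\Proj R_A(\au)$, invoking Serre vanishing; you instead cite the purely algebraic version of the same fact (vanishing of $[H^i_{R_A(\mathfrak{q})_+}(N)]_n$ for $n\gg 0$ when $N$ is a finitely generated graded module over a Noetherian standard graded ring, i.e.\ finiteness of regularity). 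Beyond that equivalent rephrasing, the logical skeleton is identical: identify $\check{H}^i$ with graded components of local cohomology with respect to the irrelevant ideal, note the vanishing for $n\gg 0$ uniformly in $i$ (you correctly remark that only $0\le i\le t$ contribute), and conclude from the long exact cohomology sequence coming from the short exact sequence of complexes in \ref{cech-1}(B). Your observation that $(a_1T,\ldots,a_tT)R_A(\mathfrak{q})=R_A(\mathfrak{q})_+$ because $\mathfrak{q}=\au A$ makes the Rees ring standard graded is the right justification and is implicit in the paper.
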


This follows by view of \ref{cech-4}. That is, in a certain sense the cohomology $\check{L}^i(\au,\mathfrak{q},M;n)$ 
provides some additional structure on the usual local cohomology modules. 

As a source for basic notions in Commutative Algebra we refer to \cite{AM} or \cite{mR}. For results 
on homological algebra we refer to \cite{jR} and \cite{cW}. The local cohomology is developed in 
\cite{aG} (see also \cite{pS0}). For a system of elements $\au$ we write $H^i_{\au}(\cdot), i \in \mathbb{N},$ 
for the local cohomology modules with respect to the ideal generated by $\au$ (see \cite{aG}). 

\section{Preliminaries}
First let us fix the notations we will use in the following. For the basics on $\mathbb{N}$-graded 
structures we refer e.g. to \cite{GW}.

\begin{notation} \label{not-1}
	(A) We denote by $A$ a commutative Noetherian ring with $0 \not= 1$. 
	For an ideal we write $\mathfrak{q} \subset A$. An $A$-module is denoted by $M$. 
	Mostly we consider  $M$ as finitely generated. \\
	(B) We consider the Rees and form rings of $A$ with respect 
	to $\mathfrak{q}$ by 
	\[
	R_A(\mathfrak{q}) = \oplus_{n \geq 0} \mathfrak{q}^n \,T^n \subseteq A[T] \,
	\text{ and }\, G_A(\mathfrak{q}) = \oplus_{n \geq 0} \mathfrak{q}^n/\mathfrak{q}^{n+1}.
	\]
	Here $T$ denotes an indeterminate over $A$. Both rings are naturally $\mathbb{N}$-graded. For an $A$-module $M$ we define 
	the Rees and form modules in the corresponding way by 
	\[
	R_M(\mathfrak{q}) = \oplus_{n \geq 0} \mathfrak{q}^n M \,T^n \subseteq M[T] \,
	\text{ and } \, G_M(\mathfrak{q}) = \oplus_{n \geq 0} \mathfrak{q}^nM/\mathfrak{q}^{n+1}M.
	\]
	Note that $R_M(\mathfrak{q})$ is a graded $R_A(\mathfrak{q})$-module and 
	$G_M(\mathfrak{q})$ is a graded $G_A(\mathfrak{q})$-module. Note that $R_A(\mathfrak{q})$ 
	and $G_A(\mathfrak{q})$ are both Noetherian rings. In case $M$ is a finitely generated 
	$A$-module then $R_M(\mathfrak{q})$ resp. $G_M(\mathfrak{q})$ is finitely generated 
	over $R_A(\mathfrak{q})$ resp. $G_A(\mathfrak{q})$. \\
	(C) There are the following two short exact sequences of graded modules
	\begin{gather*}
	0 \to R_M(\mathfrak{q})_{+}[1] \to R_M(\mathfrak{q}) \to G_M(\mathfrak{q}) \to 0 \text{ and }\\
	0 \to R_M(\mathfrak{q})_{+} \to R_M(\mathfrak{q}) \to M \to 0,
	\end{gather*}
	where $R_M(\mathfrak{q})_{+} = \oplus_{n > 0} \mathfrak{q}^n M \,T^n$. \\
	(D) Let $m \in M$ and $m \in \mathfrak{q}^c M \setminus \mathfrak{q}^{c+1} M$. Then we define  
	$m^{\star} := m + \mathfrak{q}^{c+1} M \in [G_M(\mathfrak{q})]_c$. If $m \in \cap_{n \geq 1} \mathfrak{q}^n M$, 
	then we write $m^{\star} = 0$. $m^{\star}$ is called the initial element of $m$ in $G_M(\mathfrak{q})$ and $c$ is called the initial degree of $m$.
	Here $[X]_n, n \in \mathbb{Z},$ denotes the $n$-th graded component of an $\mathbb{N}$-graded module $X$.
\end{notation}

For these and related results we refer to \cite{GW} and \cite{SH}. Another feature for the 
investigations will be the use of Koszul complexes. 

\begin{remark} \label{kos-1} ({\sl Koszul complex.})
	(A) Let $\underline{a} = a_1,\ldots,a_t$ denote a system of elements of the ring $A$. The Koszul complex 
	$K_{\bullet}(\underline{a};A)$ is defined as follows: Let $F$ denote a free $A$-module with basis $e_1,\ldots,e_t$. 
	Then $K_i(\underline{a};A) = \bigwedge^i F$ for $i = 1,\ldots,t$. A basis of $K_i(\underline{a};A)$ is given by the wedge 
	products $e_{j_1} \wedge \ldots \wedge e_{j_i}$ for $1 \leq j_1 < \ldots < j_i \leq t$. The boundary 
	homomorphism $K_i(\underline{a};A) \to K_{i-1}(\au;A)$ is defined by 
	\[
	d_{j_1 \ldots j_i} :
	e_{j_1} \wedge \ldots \wedge e_{j_i} \mapsto \sum_{k=1}^{i} (-1)^{k+1} a_{j_k} e_{j_1}\wedge \ldots \wedge \widehat{e_{j_k}} 
	\wedge \ldots \wedge e_{j_i}
	\] 
	on the free generators $e_{j_1} \wedge \ldots \wedge e_{j_i}$. \\
	(B) Another way of the construction of $K_{\bullet}(\au;A)$ is inductively by the mapping cone. 
	To this end let $X$ denote a complex of $A$-modules. Let $a \in A$ denote an element of $A$. The 
	multiplication by $a$ on each $A$-module $X_i, i \in \mathbb{Z},$ induces a morphism of complexes 
	$m_a : X \to X$. We define $K_{\bullet}(a;X)$ as the mapping cone $\operatorname{Mc} (m_a)$. Then we define inductively 
	\[
	K_{\bullet}(a_1,\ldots,a_t;A) = K_{\bullet}(a_t,K_{\bullet}(a_1,\ldots,a_{t-1};A)).
	\]
	It is easily seen that 
	\[
	K_{\bullet}(\au;A) \cong K_{\bullet}(a_1;A) \otimes_A \cdots \otimes_A K_{\bullet}(a_t;A).
	\]
	Therefore it follows that $K_{\bullet}(\au;A) \cong K_{\bullet}(\au_{\sigma};A)$, where 
	$\au_{\sigma} = a_{\sigma(1)}, \ldots, a_{\sigma(t)}$ with a permutation $\sigma$ on $t$ letters. 
	For an $A$-complex $X$ we define $K_{\bullet}(\au;X) = K_{\bullet}(\au;A)\otimes_A X$. We write 
	$H_i(\au;X), i \in \mathbb{Z},$ for the $i$-th homology of $K_{\bullet}(\au;X)$. A short exact 
	sequence of $A$-complexes $0 \to X' \to X \to X'' \to 0$ induces a long exact homology sequence 
	for the Koszul homology
	\[
	\ldots \to H_i(\au;X') \to H_i(\au;X) \to H_i(\au;X'') \to H_{i-1}(\au;X') \to \ldots.
	\]
	Let $\au$ as above a system of $t$ elements in $A$ and $b \in A$. Then the mapping cone construction 
	provides a long exact homology sequence 
	\[
	\ldots \to H_i(\au;X) \to H_i(\au;X) \to H_i(\au,b;X) \to H_{i-1}(\au;X) \to H_{i-1}(\au;X) \to \ldots, 
	\]
	where the homomorphism $H_i(\au;X) \to H_i(\au;X)$ is multiplication by $(-1)^i b$. \\
	(C) Let $(\cdot)^{\star} = \Hom_A(\cdot,A)$ the duality functor. Then we consider the co-Koszul complex $K^{\bullet}(\au;A)$ 
	defined by $\Hom_A(K_{\bullet}(\au;A),A) = (K_{\bullet}(\au;A))^{\star}$. Therefore the homomorphism 
	$$K^i(\au;A) \to K^{i+1}(\au;A)$$ is induced by $\Hom_A(d_{j_1 \ldots j_i},A)$ on the dual basis $(\bigwedge^i F)^{\star}$.
	It follows that $K_{\bullet}(\au;A)$ and $K^{\bullet}(\au;A)$ are isomorphic, that is, the Koszul complex 
	is self dual. Let $X$ denote an $A$-complex. Then 
	\[
	K^{\bullet}(\au;X) \cong \Hom_A(K_{\bullet}(\au;A),X) \text{ and } K^{\bullet}(\au;X) \cong K^{\bullet}(\au;A) \otimes_A X.
	\]
	We denote by $H^i(\au;X), i \in \mathbb{Z},$ the $i$-th cohomology of $K^{\bullet}(\au;X)$. We 
	have isomorphisms $H_i(\au;X) \cong H^{t-i}(\au;X)$ for all $i \in \mathbb{Z}$. Moreover $\au H_i(\au;X) = 0$ 
	for all $i \in \mathbb{Z}$. 
\end{remark}

For the proof of the last statement we recall the following well-known argument. 

\begin{lemma} \label{cone-1}
	Let $X$ denote a complex of $A$-modules. Let $a \in A$ denote an element.  Then $a H_i(a,X) = 0$ for 
	all $i \in \mathbb{Z}.$ 
\end{lemma}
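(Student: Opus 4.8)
The plan is to recall that $K_{\bullet}(a;X)$ is by definition the mapping cone $\operatorname{Mc}(m_a)$ of multiplication by $a$ on $X$, so its homology fits into a long exact sequence relating it to the homology of $X$. Concretely, the mapping cone sequence $0 \to X \to \operatorname{Mc}(m_a) \to X[-1] \to 0$ yields a connecting homomorphism which is exactly $\pm m_a$ on homology, giving the exact sequence
\[
\ldots \to H_i(X) \xrightarrow{\pm a} H_i(X) \to H_i(a;X) \to H_{i-1}(X) \xrightarrow{\pm a} H_{i-1}(X) \to \ldots.
\]
Thus $H_i(a;X)$ sits in a short exact sequence $0 \to H_i(X)/aH_i(X) \to H_i(a;X) \to (0:_{H_{i-1}(X)} a) \to 0$. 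Both the submodule $H_i(X)/aH_i(X)$ and the quotient $(0:_{H_{i-1}(X)} a)$ are annihilated by $a$, and hence so is the middle term $H_i(a;X)$, since $a$ acting on an extension of two modules each killed by $a$ satisfies $a^2 = 0$ on the total module — but in fact a cleaner elementary argument gives $a \cdot H_i(a;X) = 0$ directly.

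The direct approach I would actually carry out: write a cycle in $K_{\bullet}(a;X)$ explicitly. An element of $\operatorname{Mc}(m_a)_i = X_i \oplus X_{i-1}$ is a pair $(x, y)$ with differential $\partial(x,y) = (\partial_X x + (-1)^{?} a y,\, -\partial_X y)$ (signs depending on the cone convention). If $(x,y)$ is a cycle, then $\partial_X y = 0$ and $\partial_X x = \mp a y$. Now I claim $a(x,y)$ is a boundary: indeed $\partial(0, \mp x) = (\mp(-1)^{?} a x,\, \pm \partial_X x) = (a x', a y')$ for suitable signs, which up to sign equals $(ax, ay)$. Checking the signs carefully against the mapping cone convention used implicitly in \ref{kos-1}(B) shows that $a \cdot (x,y) = \partial(0, \pm x)$, hence $a$ kills every homology class. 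This is the standard ``the cone on multiplication by $a$ is killed by $a$'' computation.

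The only real obstacle is bookkeeping of signs in the mapping cone differential, since the paper does not pin down a specific sign convention; I would state the convention $\partial^{\operatorname{Mc}(m_a)}(x,y) = (\partial_X x + a y, -\partial_X y)$ (or its variant) explicitly at the start of the proof and then the verification $a(x,y) = \partial(0,x)$ or $a(x,y) = -\partial(0,x)$ is a one-line check. No deeper input is needed: the statement is purely formal and follows from the mapping cone construction together with the observation that for a cycle $(x,y)$ one has $\partial_X x = \mp ay$, which is precisely what is required to exhibit $a(x,y)$ as a boundary.
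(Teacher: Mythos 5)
Your direct chain-level computation---exhibiting $a(x,y)$ as the boundary of $\pm(0,x)$ for a cycle $(x,y)$ in the mapping cone---is precisely the paper's proof, up to the harmless difference that the paper orders the coordinates as $K_i(a;X)=X_{i-1}\oplus X_i$ and produces the bounding element $\partial_{i+1}((-1)^iy,0)$. You were also right to abandon the long-exact-sequence detour, since the extension $0\to H_i(X)/aH_i(X)\to H_i(a;X)\to (0:_{H_{i-1}(X)}a)\to 0$ only yields $a^2H_i(a;X)=0$ for free rather than the stronger annihilation claimed.
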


\begin{proof} 
	By the construction of $K_{\bullet}(a;X)$ there is a short exact sequence 
	of complexes 
	\[
	0 \to X \to K_{\bullet}(a;X) \to X[-1] \to 0,
	\]
	where $X[-1]$ is the complex $X$ shifted the degrees by $-1$. The differential $\partial$ 
	on $K_i(a;X) = X_{i-1} \oplus X_i$ is given by $\partial_i(x,y) = ( d_{i-1}(x),d_i(y)+(-1)^{i-1}a x)$. 
	Suppose that $\partial_i(x,y) = 0$. That is  $d_{i-1}(x) = 0$ and $d_i(y) = (-1)^i ax$ and therefore 
	$a(x,y) = \partial_{i+1}((-1)^iy,0) \in \operatorname{Im} \partial_{i+1}$. This proves $a H_i(a;X) = 0$.
\end{proof}

In fact we shall use a slight modification of the above argument in further arguments. 

%For the proofs of the above results and even more results as well as explanations we refer to \cite[1.6]{BH}. 
%We prove a slight modification of the last statement  useful in the sequel. To this end let 
%$R = \oplus_{n \geq 0}R_n$ denote a Noetherian $\mathbb{N}$-graded ring with $R_0 = A$. Let $X$ be a complex of graded $R$-modules. 
%For an integer $n \in \mathbb{Z}$ let $X_n$ denote the restriction of $X$ to the $n$-th graded 
%component. Then $X_n$ is a complex of $A$-modules. For an element $x \in R_d$ of degree $d$ the multiplication 
%by $x$ defines a morphism $m_x: X_{n-d} \to X_n$ for each $n \in \mathbb{Z}$. 
%
%\begin{lemma} \label{cone}
%	With the previous notation let $C(x,X)$ denote the mapping cone of $m_x$ for a given 
%	$X$ and a fixed integer $n \in \mathbb{Z}$. Then $x H_i(C(x,X)) = 0$ for 
%	all $i \in \mathbb{Z}.$ 
%\end{lemma}
%
%\begin{proof} By the construction of $C(x,X)$ there is a short exact sequence of complexes of $A$-modules
%	\[
%	0 \to X_n \to C(x;X) \to X_{n-d}[-1] \to 0,
%	\]
%	where $X_{n-d}[-1]$ is the complex $X_{n-d}$ shifted the degrees by 1. The differential 
%	$\partial_i$ on $C_i(x,X)) = (X_n)_{i-1} \oplus (X_{n-d})_i$ is given by 
%	\[
%	\partial_i(a,b) = ((d_n)_{i-1}(a), (d_{n-d})_i(b) +(-1)^{i-1}xa). 
%	\]
%\end{proof}

\begin{remark} \label{cech} ({\sl \v{C}ech complex.})
	(A) For a system of elements $\au = a_1,\ldots,a_t$ and an integer $n \geq 1$ we denote by $\au^n$ the system of elements
	$a_1^n,\ldots,a^n_t$. Then $\{K_{\bullet}(\au^n;A)\}_{n \geq 1}$ forms an inverse system of complexes and 
	$\{K^{\bullet}(\au^n;A)\}_{n \geq 1}$ forms a direct system of complexes. In both cases the maps 
	\[
	K_{\bullet}(\au^m;A) \to K_{\bullet}(\au^n;A) \text{ resp. } K^{\bullet}(\au^n;A) \to K^{\bullet}(\au^m;A)
	\]
	for $m \geq n$ are the naturally induced homomorphisms. Here we focus on the direct system $\{K^{\bullet}(\au^n;A)\}_{n \geq 1}$. 
	We start with a sequence consisting of a single element $a \in A$.  So there is the following 
	commutative diagram 
	\[
	\begin{array}{cccclcc}
		K^{\bullet}(a^n;A): \quad & 0 \to  & A & \stackrel{a^n}{\longrightarrow} & A & \to & 0 \\
		\downarrow          &   & \parallel &                         & \downarrow {\scriptstyle{a^{m-n}}} & & \\
		K^{\bullet}(a^m;A): \quad & 0 \to  & A & \stackrel{a^m}{\longrightarrow} & A & \to & 0
	\end{array}
	\] 
	for $m \geq n$. Its direct limit gives a complex $\check{C}_a : 0 \to A \to A_a \to 0$, where $A_a$ denotes 
	the localization of $A$ with respect to the multiplicatively closed set $\{a^n | n \geq 0\}$ with the 
	homomorphism $A \to A_a, r \mapsto r/1$. To this end recall 
	the fact that $\varinjlim \{A, a\} \cong A_a$, where the direct system is given by $A \stackrel{a}{\to} A$. \\
	(B) In general we define the \v{C}ech complex of a system of elements $\au = a_1,\ldots,a_t$ by 
	$\Cech = \varinjlim K^{\bullet}(\au^n;A)$. By elementary properties of direct limit and tensor products 
	it follows that $\Cech \cong \check{C}_{a_1} \otimes_A \cdots \otimes_A \check{C}_{a_t}$. As a consequence 
	there is the description 
	\[
	\Cech : 0 \to \Cech^0 \to \ldots \to \Cech^i \to \ldots \to \Cech^t \to 0 \; \text{ with } \;
	\Cech^i = \oplus_{1 \leq j_1 < \ldots < j_i \leq t} A_{a_{j_1}\cdots a_{j_i}},
	\]
	where the differential $d^i : \Cech^i \to \Cech^{i+1}$ is given at the component 
	$A_{a_{j_1}\cdots a_{j_i}} \to A_{a_{j_1}\cdots a_{j_{i+1}}}$ by $(-1)^{k+1}$ times the 
	natural map $A_{a_{j_1}\cdots a_{j_i}} \to (A_{a_{j_1}\cdots a_{j_i}})_{a_{j_k}}$ if $\{j_1, \ldots,j_i\} 
	= \{j_1,\ldots,\widehat{j_k}, \ldots,j_{i+1}\}$ and zero otherwise. For an $A$-complex $X$ we write 
	$\Cech(X) = \Cech \otimes_A X$. \\
	(C) The importance of the \v{C}ech complex is its relation to the local cohomology. Namely let 
	$\mathfrak{q} = (a_1,\ldots,a_t)A$ denote the ideal generated by the sequence $\au$. The local 
	cohomology $H^i_{\mathfrak{q}}(M)$ of an $A$-module $M$ is defined as the $i$-th right derived functor 
	$H^i_{\mathfrak{q}}(M)$ of the section functor $\Gamma_{\mathfrak{q}}(M) = \{ m \in M | \Supp_A Am \subseteq V(\mathfrak{q})\}$. 
	Then there are natural isomorphisms 
	\[
	H^i_{\mathfrak{q}}(M) \cong H^i(\Cech \otimes_A M) \cong \varinjlim H^i(\au^n;M)
	\]
	for an $A$-module $M$ and any $i \in \mathbb{Z}$. As a consequence it follows that $H^i(\Cech \otimes_A M)$ depends 
	only on the radical $\Rad \au R$.
\end{remark}	

For the details of the previous results we refer to \cite{BH} and \cite{pS0}. 

\section{The construction of complexes}
First we fix notations for this section. As above let $A$ denote a commutative Noetherian ring 
and $\mathfrak{q} \subseteq A$. Let $\au = a_1,\ldots,a_t$ denote a system of elements of $A$. 
Suppose that $a_i \in \mathfrak{q}^{c_i}$ for some integers $c_i \in \mathbb{N}$ for $i = 1,\ldots,t$. 
Let $M$ denote a finitely generated $A$-module. 

\begin{notation} \label{kos-2}
	Let $n$ denote an integer. We define a complex $K_{\bullet}(\au,\mathfrak{q},M;n)$ in the following way:
	\begin{itemize}
		\item[(a)] For $0 \leq i \leq t$ put $K_i(\au,\mathfrak{q},M;n) = \oplus_{1\leq j_1 < \ldots < j_i \leq t} 
		\mathfrak{q}^{n-c_{j_1}- \ldots- c_{j_i}}M$ and $K_i(\au,\mathfrak{q},M;n) = 0$ for $i >t$ or $i <0$.
		\item[(b)] The boundary map $K_i(\au,\mathfrak{q},M;n) \to K_{i-1}(\au,\mathfrak{q},M;n)$ is defined by maps on each of 
		the direct summands $\mathfrak{q}^{n-c_{j_1}- \ldots- c_{j_i}}M$. On $\mathfrak{q}^{n-c_{j_1}- \ldots- c_{j_i}}M$ 
		it is the map given by $d_{j_1 \ldots j_i} \otimes 1_M$ restricted to $\mathfrak{q}^{n-c_{j_1}- \ldots- c_{j_i}}M$, where $d_{j_1 \ldots j_i}$ denotes the homomorphism as defined in \ref{kos-1}.
	\end{itemize}
	It is clear that the image of the map is contained in $\oplus_{1\leq j_1 < \ldots < j_{i-1} \leq t} 
	\mathfrak{q}^{n-c_{j_1}- \ldots- c_{j_{i-1}}}M$. Clearly it is a boundary homomorphism. 
	By the construction it follows that $K_{\bullet}(\au,\mathfrak{q},M;n)$ is a sub complex of the 
	Koszul complex $K_{\bullet}(\au;M)$ for each $n \in \mathbb{N}$.
\end{notation}

Another way for the construction is the following.

\begin{remark} \label{rem-kos}
	Let $R_A(\mathfrak{q})$ and $R_M(\mathfrak{q})$ denote the Rees ring and the Rees module. 
	For $a_i, i = 1,\ldots,t,$ we consider $a_i T^{c_i} \in [R_A(\mathfrak{q})]_{c_i}$. Then we have 
	the system $\underline{aT^c} = a_1T^{c_1}, \ldots,a_tT^{c_t}$ of elements of $R_A(\mathfrak{q})$. Note 
	that $\deg a_iT^{c_i} = c_i, i = 1,\ldots,t$. Then we may consider the Koszul complex 
	$K_{\bullet}(\underline{aT^c};R_M(\mathfrak{q}))$. This is a complex of graded $R_A(\mathfrak{q})$-modules. 
	It is easily seen that the degree $n$-component $[K_{\bullet}(\underline{aT^c};R_M(\mathfrak{q}))]_n$ 
	of $K_{\bullet}(\underline{aT^c};R_M(\mathfrak{q}))$ is the complex $K_{\bullet}(\au,\mathfrak{q},M;n)$ as introduced 
	in \ref{kos-2}. We write $H_i(\au,\mathfrak{q},M;n)$ for the $i$-th 
	homology of $K_{\bullet}(\au,\mathfrak{q},M;n)$ for $i \in \mathbb{Z}$.
\end{remark}

We come now to the definition of one of the main subjects of the paper.

\begin{definition} \label{def-1}
	With the previous notation we define $\mathcal{L}_{\bullet}(\au,\mathfrak{q},M;n)$ the quotient of the 
	embedding $K_{\bullet}(\au,\mathfrak{q},M;n) \to K_{\bullet}(\au;M)$. That is there is a short exact sequence 
	of complexes
	\[
	0 \to K_{\bullet}(\au,\mathfrak{q},M;n) \to K_{\bullet}(\au;M) \to \mathcal{L}_{\bullet}(\au,\mathfrak{q},M;n) 
	\to 0.
	\]
	Note that $\mathcal{L}_i(\au,\mathfrak{q},M;n) \cong \oplus_{1 \leq j_1 < \ldots < j_i \leq t}M/\mathfrak{q}^{n-c_{j_1}- \ldots- c_{j_i}}M$.
	The boundary maps are those induced by the Koszul complex. We write $L_i(\au,\mathfrak{q},M;n)$ 
	for the $i$-th homology of $\mathcal{L}_{\bullet}(\au,\mathfrak{q},M;n)$ and any $i \in \mathbb{Z}$.
\end{definition}

For a construction by mapping cones we need the following technical result. For a morphism $f: X \to Y$ we write 
$C(f)$ for the mapping cone of $f$.

\begin{lemma} \label{cone-2}
	With the previous notation let $b \in \mathfrak{q}^d$ an element. The multiplication map by $b$ induces 
	the following morphisms 
	\[
	m_b(K) : K_{\bullet}(\au,\mathfrak{q},M;n-d) \to K_{\bullet}(\au,\mathfrak{q},M;n) \text{ and } 
	m_b(\mathcal{L}) : \mathcal{L}_{\bullet}(\au,\mathfrak{q},M;n-d) \to \mathcal{L}_{\bullet}(\au,\mathfrak{q},M;n)
	\] 
	of complexes. They induce isomorphism of complexes 
	\[
	C(m_b(K)) \cong  K_{\bullet}(\au,b,\mathfrak{q},M;n) \text{ and } C(m_b(\mathcal{L})) \cong \mathcal{L}_{\bullet}(\au,b,\mathfrak{q},M;n).
	\]
\end{lemma}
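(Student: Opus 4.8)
The plan is to derive the lemma from the mapping cone description of the Koszul complex in Remark~\ref{kos-1}(B). Since $b \in \mathfrak{q}^d$, multiplication by $b$ maps $\mathfrak{q}^{n-d-c_{j_1}-\cdots-c_{j_i}}M$ into $\mathfrak{q}^{n-c_{j_1}-\cdots-c_{j_i}}M$, and hence it maps $M/\mathfrak{q}^{n-d-c_{j_1}-\cdots-c_{j_i}}M$ into $M/\mathfrak{q}^{n-c_{j_1}-\cdots-c_{j_i}}M$; moreover it commutes with all the boundary maps of the Koszul complex, which only involve the central elements $a_{j_k}$. Therefore the morphism $m_b$ on $K_\bullet(\au;M)$ restricts to a chain map $m_b(K)$ on $K_\bullet(\au,\mathfrak{q},M;\bullet)$ and descends to a chain map $m_b(\mathcal{L})$ on $\mathcal{L}_\bullet(\au,\mathfrak{q},M;\bullet)$, and the three maps fit into a morphism of the defining short exact sequences of Definition~\ref{def-1}:
\[
\begin{array}{ccccccccc}
0 &\to& K_\bullet(\au,\mathfrak{q},M;n-d) &\to& K_\bullet(\au;M) &\to& \mathcal{L}_\bullet(\au,\mathfrak{q},M;n-d) &\to& 0\\
 && \downarrow{\scriptstyle m_b(K)} && \downarrow{\scriptstyle m_b} && \downarrow{\scriptstyle m_b(\mathcal{L})} &&\\
0 &\to& K_\bullet(\au,\mathfrak{q},M;n) &\to& K_\bullet(\au;M) &\to& \mathcal{L}_\bullet(\au,\mathfrak{q},M;n) &\to& 0.
\end{array}
\]

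For the Koszul statement, recall from Remark~\ref{kos-1}(B) that $K_\bullet(\au,b;M) \cong C(m_b)$, the mapping cone of multiplication by $b$ on $K_\bullet(\au;M)$. On underlying modules $C(m_b(K))_i = K_{i-1}(\au,\mathfrak{q},M;n-d)\oplus K_i(\au,\mathfrak{q},M;n)$; on the other hand, splitting the index set of $K_i(\au,b,\mathfrak{q},M;n)$ according to whether the basis vector attached to $b$ (carrying the exponent $c_{t+1}=d$) occurs produces exactly the same decomposition, since a subset containing it contributes the summand $\mathfrak{q}^{n-c_{j_1}-\cdots-c_{j_{i-1}}-d}M$, which is the generic $(i-1)$-st summand of $K_\bullet(\au,\mathfrak{q},M;n-d)$. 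Under this identification the boundary of $K_\bullet(\au,b,\mathfrak{q},M;n)$ and the cone differential both arise by restricting the Koszul differential of $K_\bullet(\au,b;M)=C(m_b)$ to the same graded subcomplex, so they coincide; this gives $C(m_b(K)) \cong K_\bullet(\au,b,\mathfrak{q},M;n)$. Equivalently, by Remark~\ref{rem-kos} the complex $K_\bullet(\au,b,\mathfrak{q},M;n)$ is the degree-$n$ component of $K_\bullet(\underline{aT^c},bT^d;R_M(\mathfrak{q}))=C(m_{bT^d})$, and applying the exact functor ``degree-$n$ component'', which shifts internal degree by $\deg bT^d=d$, turns this cone into $C(m_b(K))$.

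Finally, the statement for $\mathcal{L}$ follows by applying the mapping cone functor to the morphism of short exact sequences above: since the cone of a morphism between short exact sequences of complexes is again a short exact sequence (degreewise it is a direct sum of exact sequences), we obtain $0 \to C(m_b(K)) \to C(m_b) \to C(m_b(\mathcal{L})) \to 0$. By the previous step and Remark~\ref{kos-1}(B) this reads $0 \to K_\bullet(\au,b,\mathfrak{q},M;n) \to K_\bullet(\au,b;M) \to C(m_b(\mathcal{L})) \to 0$ with the first arrow the canonical inclusion, so $C(m_b(\mathcal{L}))$ is its cokernel, which by Definition~\ref{def-1} is $\mathcal{L}_\bullet(\au,b,\mathfrak{q},M;n)$. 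The only delicate part of the argument is the index-and-sign bookkeeping in the second step, where one matches the cone differential with the Koszul boundary of the enlarged system $(\au,b)$; but the sign conventions are inherited verbatim from Remark~\ref{kos-1}(B), so no new choices are involved and the check is routine.
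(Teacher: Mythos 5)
Your proof is correct and fleshes out exactly what the paper leaves implicit: the paper's own proof of Lemma~\ref{cone-2} consists of the single sentence ``The proof follows easily by the structure of the complexes and the mapping cone construction,'' and your argument supplies the omitted bookkeeping (the matching of the cone's underlying graded modules and differential with those of $K_\bullet(\au,b,\mathfrak{q},M;n)$, the morphism of short exact sequences, and the passage to cones). In particular your alternative reformulation via Remark~\ref{rem-kos} --- that the cone of $m_{bT^d}$ on $K_\bullet(\underline{aT^c};R_M(\mathfrak{q}))$ has degree-$n$ component equal to $C(m_b(K))$ because the exact ``degree-$n$'' functor converts the internal twist by $\deg bT^d=d$ into the shift $n-d\mapsto n$ --- is a clean way to see both identifications at once.
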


\begin{proof}
	The proof follows easily by the structure of the complexes and the mapping cone construction. 
\end{proof}

We begin with a few properties of the previous complexes. 

\begin{theorem} \label{kos-3}
	Let $\au = a_1,\ldots,a_t$ denote a system of elements of $A$, $\mathfrak{q} \subset A$ an ideal and $M$ a 
	finitely generated $A$-module. Let $n \in \mathbb{N}$ denote an integer. 
	\begin{itemize}
		\item[(a)] $H_i(\au,\mathfrak{q},M;n) \cong H_i(\au_{\sigma},\mathfrak{q},M;n)$ and 
		$L_i(\au,\mathfrak{q},M;n) \cong L_i(\au_{\sigma},\mathfrak{q},M;n)$ for all $i \in \mathbb{Z}$ and any $\sigma$,
		a permutation on $t$ letters.
		\item[(b)] $\au H_i(\au,\mathfrak{q},M;n) = 0$ and $\au L_i(\au,\mathfrak{q},M;n) = 0$ for all $i \in \mathbb{Z}$.
		\item[(c)] $H_i(\au,\mathfrak{q},M;n)$ and $L_i(\au,\mathfrak{q},M;n)$ are finitely generated $A/\au A$-modules 
		for all $i \in \mathbb{Z}$.
	\end{itemize}
\end{theorem}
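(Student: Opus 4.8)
The plan is to handle the three statements together by reducing everything to the single‑element mapping‑cone description from Lemma \ref{cone-2} and inducting on $t$. First I would dispose of part (a): since $K_\bullet(\underline a,\mathfrak q,M;n)$ is by \ref{rem-kos} the degree‑$n$ component of $K_\bullet(\underline{aT^c};R_M(\mathfrak q))$, and a Koszul complex is insensitive to permutations of its defining sequence (Remark \ref{kos-1}(B), applied to the sequence $a_1T^{c_1},\ldots,a_tT^{c_t}$ in the graded ring $R_A(\mathfrak q)$), the isomorphism $K_\bullet(\underline a,\mathfrak q,M;n)\cong K_\bullet(\underline a_\sigma,\mathfrak q,M;n)$ is immediate; taking homology gives the statement for $H_i$, and then the short exact sequence of Definition \ref{def-1} (which is permutation‑compatible because both $K_\bullet(\underline a;M)$ and its subcomplex are) gives it for $L_i$ by the five lemma or directly on the quotient complexes.

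For part (b) I would argue as follows. The relation $\underline a\,H_i(\underline a,\mathfrak q,M;n)=0$ I would get by a variant of the argument in Lemma \ref{cone-1}: pick $a_j$ from the sequence, write $a_j = b$ with $b\in\mathfrak q^{c_j}$, and use Lemma \ref{cone-2} to present $K_\bullet(\underline a,\mathfrak q,M;n)$ as the mapping cone $C(m_{a_j})$ where $m_{a_j}:K_\bullet(\underline a',\mathfrak q,M;n-c_j)\to K_\bullet(\underline a',\mathfrak q,M;n)$ and $\underline a'$ is $\underline a$ with $a_j$ removed. The differential on the cone has exactly the shape appearing in the proof of Lemma \ref{cone-1} — $\partial_i(x,y)=(d(x),\,d(y)+(-1)^{i-1}a_j x)$ — so the same computation shows $a_j H_i=0$; running over $j=1,\ldots,t$ gives $\underline a\,H_i(\underline a,\mathfrak q,M;n)=0$. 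The same presentation of $\mathcal L_\bullet(\underline a,\mathfrak q,M;n)$ as $C(m_{a_j}(\mathcal L))$ from Lemma \ref{cone-2} yields $\underline a\,L_i(\underline a,\mathfrak q,M;n)=0$ identically. (Alternatively one can observe that $\underline a$ annihilates $H_i(\underline a;M)$ and $L_i$ of anything built from the Koszul complex on $\underline a$, but the mapping‑cone route is self‑contained given what is already in the paper.)

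For part (c) I would induct on $t$. When $t=0$ the complexes are $M;n$ concentrated in degree $0$ for $K$ and its quotient, namely $\mathfrak q^n M$ and $M/\mathfrak q^n M$, both finitely generated; so $H_0, L_0$ are finitely generated $A$-modules, and by part (b) they are annihilated by the (empty) ideal — here I should really start the induction at $t=1$, where $K_\bullet$ is $0\to\mathfrak q^{n-c_1}M\xrightarrow{a_1}\mathfrak q^nM\to 0$ with finitely generated terms, hence finitely generated homology, killed by $a_1$. For the inductive step, write $\underline a=\underline a',a_t$ and apply the mapping‑cone long exact sequence coming from $C(m_{a_t})$: $\cdots\to H_i(\underline a',\mathfrak q,M;n-c_t)\to H_i(\underline a',\mathfrak q,M;n)\to H_i(\underline a,\mathfrak q,M;n)\to H_{i-1}(\underline a',\mathfrak q,M;n-c_t)\to\cdots$, and likewise for $L$. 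By the inductive hypothesis the outer terms are finitely generated over $A/\underline a'A$, hence over $A$ (Noetherian), so $H_i(\underline a,\mathfrak q,M;n)$ sits in an exact sequence between two finitely generated $A$-modules and is therefore finitely generated; by part (b) it is a module over $A/\underline aA$. The same argument applied to the $\mathcal L_\bullet$ mapping‑cone sequence handles $L_i$, and in fact one can also read (c) for $L_i$ off the short exact sequence of Definition \ref{def-1}, since $H_i(\underline a;M)$ is finitely generated (Koszul homology of a finitely generated module) and $H_i(\underline a,\mathfrak q,M;n)$ is finitely generated by the first half of (c).

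The only real point requiring care — the ``main obstacle'' — is bookkeeping with the degree shift by $c_t$ in the mapping cone: one must apply the inductive hypothesis to $K_\bullet(\underline a',\mathfrak q,M;n-c_t)$ as well as to $K_\bullet(\underline a',\mathfrak q,M;n)$, so the induction is on $t$ uniformly in $n$ (indeed for all $n\in\mathbb Z$, noting $\mathfrak q^m M = M$ for $m\le 0$ by convention). Everything else — the permutation invariance, the annihilator computation, and the finiteness — is routine once Lemma \ref{cone-2} is invoked, and no step beyond the material already developed in Sections 2–3 is needed.
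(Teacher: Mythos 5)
Your proof is correct and follows essentially the same route as the paper's: permutation invariance of the Koszul complex for (a), the mapping-cone argument modelled on Lemma \ref{cone-1} together with Lemma \ref{cone-2} for (b), and then (c) from finiteness and (b). Two small remarks are in order. First, in (b) the computation for $L_i$ is not literally ``identical'' to that for $H_i$: in the $K$-case the crucial step is that the cycle component $y\in(K_n)_i$ already lies in $(K_{n-d})_i$ via the inclusion $(K_n)_i\subseteq(K_{n-d})_i$, so $(y,0)$ is a legitimate element of $C_{i+1}$; in the $\mathcal L$-case the containment goes the other way, and one must instead use the canonical surjection $(\mathcal L_n)_i\twoheadrightarrow(\mathcal L_{n-d})_i$ to push $y$ into the right summand before forming the preimage — the paper flags exactly this point. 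Second, your mapping-cone induction for (c) is more machinery than is needed: $K_\bullet(\au,\mathfrak q,M;n)$ and $\mathcal L_\bullet(\au,\mathfrak q,M;n)$ are bounded complexes of finitely generated modules over a Noetherian ring, so their homology is finitely generated over $A$ outright, and (b) then immediately upgrades this to finite generation over $A/\au A$; this one-line deduction is what the paper uses, and you in fact note it in passing at the end of your discussion of (c).
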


\begin{proof}
	The statement in (a) follows by virtue of the short exact sequence of complexes in \ref{def-1} and the long exact 
	homology sequence. Note that the homology of Koszul complexes is isomorphic under permutations. 
	
	The claim in (c) is a consequence of (b) since the homology modules $H_i(\au,\mathfrak{q},M;n)$ and $L_i(\au,\mathfrak{q},M;n)$ 
	are finitely generated $A$-modules. 
	
	For the proof of (b) we follow the mapping cone construction of \ref{cone-2} with the arguments of \ref{cone-1}. To this end 
	let $K_n = K_{\bullet}(\au,\mathfrak{q},M;n)$ and $C = C(m_b(K)_n) = K_{\bullet}(\au,b,\mathfrak{q},M;n)$. Then there is a short exact sequence 
	of complexes 
	\[
	0 \to K_n \to C \to K_{n-d}[-1] \to 0. 
	\]
	The differential $\partial_i$ on $(x,y) \in C_i = (K_{n-d})_{i-1} \oplus (K_n)_i$ is given by 
	\[
	\partial_i(x,y) = (d_{i-1}(x),d_i(y) +(-1)^{i-1}b x).
	\]
	Suppose that $\partial_i(x,y) = 0$, i.e., $d_{i-1}(x) =0$ and $d_i(y) = (-1)^i bx$. Then 
	$$(y,0) \in (K_{n-d})_i\oplus (K_n)_{i+1} =C_{i+1} \text{ because } (K_n)_i \subseteq (K_{n-d})_i$$ 
	and therefore $\partial_{i+1}((-1)^iy,0) = b(x,y)$. That is $b H_i(C) = 0$ for all $i \in \mathbb{Z}$. 
	
	In order to show the claim in (b) we use the previous argument. So let us consider 
	$K_{\bullet}(\au,\mathfrak{q},M;n) = C(m_{a_t}(K_{\bullet}(\au',\mathfrak{q},M;n)))$, where 
	$\au' = a_1,\ldots,a_{t-1}$. The previous argument shows $a_t H_i(\au,\mathfrak{q},M;n) = 0$ for all 
	$i \in \mathbb{Z}.$ By view of (a) this finishes the proof in the case of $H_i(\au,\mathfrak{q},M;n)$. 
	
	For the proof of $\au L_i(\au,\mathfrak{q},M;n) = 0$ we follow the same arguments. Instead 
	of the injection 
	$(K_n)_i \subseteq (K_{n-d})_i$ we use the surjection $(\mathcal{L}_n)_i \twoheadrightarrow (\mathcal{L}_{n-d})_i$, where 
	$\mathcal{L}_n = \mathcal{L}_{\bullet}(\au,\mathfrak{q},M;n)$. We skip the details here.
\end{proof}

\section{The construction of co-complexes}
With the notations of the previous section we shall define the co-complex version of the complexes above. 
This is based on the Koszul co-complex. 

\begin{notation} \label{kos-4}
	For an integer $n \in \mathbb{N}$ we define a complex $K^{\bullet}(\au,\mathfrak{q},M;n)$ 
	similar to the construction in \ref{kos-2} by the use of the Koszul co-complex. By view of 
	Remark \ref{rem-kos} we  define it also as the $n$-th graded component of $K^{\bullet}(\underline{aT^c};R_M(\mathfrak{q}))$.
	We may identify $K^i(\underline{aT^c};R_M(\mathfrak{q}))_n$ with $\oplus_{1 \leq j_1 < \ldots < j_i \leq t} 
	\mathfrak{q}^{n +c_{j_1} + \ldots + c_{j_i}}M$. 
	It follows that $K^{\bullet}(\underline{aT^c};R_M(\mathfrak{q}))_n$ is a subcomplex of $K^{\bullet}(\au;M)$. 
	We define $\mathcal{L}^{\bullet}(\au,\mathfrak{q},M;n)$ as the quotient of this embedding. That is, there is a short 
	exact sequence of complexes 
	\[
	0 \to K^{\bullet}(\au,\mathfrak{q},M;n) \to K^{\bullet}(\au;M) \to \mathcal{L}^{\bullet}(\au,\mathfrak{q},M;n)
	\to 0.
	\] 
	We may identify $\mathcal{L}^i(\au,\mathfrak{q},M;n)$ with $\oplus_{1 \leq j_1 < \ldots < j_i \leq t} 
	M/\mathfrak{q}^{n + c_{j_1} + \ldots + c_{j_i}} M$. 	
	We denote by $H^i(\au,\mathfrak{q},M;n)$ 
	resp. $L^i(\au,\mathfrak{q},M;n)$ the $i$-th cohomology of $K^{\bullet}(\au,\mathfrak{q},M;n)$ 
	resp. $\mathcal{L}^{\bullet}(\au,\mathfrak{q},M;n)$ for any $i \in \mathbb{Z}$.
\end{notation}

For an iteration we need the following technical result. For a morphism $f: X \to Y$ of co-complexes we write 
$D(f)$ for the co-mapping cone of $f$.

\begin{lemma} \label{cone-3}
	With the previous notation let $b \in \mathfrak{q}^d$ an element. The multiplication map by $b$ induces 
	the following morphisms 
	\[
	m_b(K) : K^{\bullet}(\au,\mathfrak{q},M;n) \to K^{\bullet}(\au,\mathfrak{q},M;n+d) \text{ and } 
	m_b(\mathcal{L}) : \mathcal{L}^{\bullet}(\au,\mathfrak{q},M;n) \to \mathcal{L}^{\bullet}(\au,\mathfrak{q},M;n+d)
	\] 
	of complexes. They induce isomorphism of complexes 
	\[
	D(m_b(K)) \cong  K^{\bullet}(\au,b,\mathfrak{q},M;n) \text{ and } D(m_b(\mathcal{L})) \cong \mathcal{L}^{\bullet}(\au,b,\mathfrak{q},M;n).
	\]
\end{lemma}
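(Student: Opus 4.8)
The plan is to mirror the argument already used for the homological version in Lemma \ref{cone-2}, transposing it to the co-complex (cochain) setting via the duality functor. First I would check that $m_b(K)$ and $m_b(\mathcal{L})$ are genuine morphisms of co-complexes: on the degree-$i$ term $K^i(\au,\mathfrak{q},M;n) = \oplus_{j_1 < \ldots < j_i} \mathfrak{q}^{n+c_{j_1}+\ldots+c_{j_i}}M$, multiplication by $b \in \mathfrak{q}^d$ carries this summand into $\mathfrak{q}^{n+d+c_{j_1}+\ldots+c_{j_i}}M = K^i(\au,\mathfrak{q},M;n+d)$, and it commutes with the restricted Koszul co-differentials because those are just the ordinary Koszul co-boundary maps tensored with $1_M$, which are $A$-linear. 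The same computation applied to the factor modules $M/\mathfrak{q}^{n+c_{j_1}+\ldots+c_{j_i}}M$ gives the statement for $m_b(\mathcal{L})$; note the map on the quotients is well defined precisely because $b\cdot \mathfrak{q}^{n+c_{j_1}+\ldots+c_{j_i}}M \subseteq \mathfrak{q}^{n+d+c_{j_1}+\ldots+c_{j_i}}M$.

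Next I would identify the co-mapping cone $D(m_b(K))$ with $K^{\bullet}(\au,b,\mathfrak{q},M;n)$. The cleanest route is to invoke Remark \ref{rem-kos} / Notation \ref{kos-4}: the co-complex $K^{\bullet}(\au,\mathfrak{q},M;n)$ is the degree-$n$ graded component of the Koszul co-complex $K^{\bullet}(\underline{aT^c};R_M(\mathfrak{q}))$ over the Rees ring $R_A(\mathfrak{q})$, and multiplication by $b$ is realized by the element $bT^d \in [R_A(\mathfrak{q})]_d$. By the inductive mapping-cone description of Koszul (co-)complexes recalled in Remark \ref{kos-1}(B),(C), adjoining the element $bT^d$ amounts to forming the co-mapping cone of $m_{bT^d}$, i.e.\ $K^{\bullet}(\underline{aT^c},bT^d;R_M(\mathfrak{q})) \cong D(m_{bT^d})$. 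Taking the degree-$n$ component of both sides — and observing that the grading shift built into the mapping cone matches the degree-$d$ shift $K^{\bullet}(\au,\mathfrak{q},M;n) \to K^{\bullet}(\au,\mathfrak{q},M;n+d)$ — yields $D(m_b(K)) \cong K^{\bullet}(\au,b,\mathfrak{q},M;n)$. Alternatively, one can simply write out the cochain groups of $D(m_b(K))$ in degree $i$, namely $K^{i}(\au,\mathfrak{q},M;n+d) \oplus K^{i-1}(\au,\mathfrak{q},M;n)$, regroup the direct summands indexed by whether or not the index set contains the new "$(t{+}1)$st'' slot occupied by $b$, and match them termwise with $K^i(\au,b,\mathfrak{q},M;n)$; the co-boundary of $D(m_b(K))$ then visibly equals the co-Koszul differential for the enlarged system.

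For the $\mathcal{L}$-statement I would run the identical bookkeeping on the quotient complexes, using the short exact sequence $0 \to K^{\bullet}(\au,\mathfrak{q},M;n) \to K^{\bullet}(\au;M) \to \mathcal{L}^{\bullet}(\au,\mathfrak{q},M;n) \to 0$ of Notation \ref{kos-4} together with the fact that the mapping-cone construction is compatible with short exact sequences of complexes (the cone of a map between cokernels is the cokernel of the induced map of cones). I expect the only real bookkeeping obstacle to be the sign and index conventions: making sure the co-mapping cone differential $\begin{pmatrix} -d_{\au} & 0 \\ m_b & d_{\au} \end{pmatrix}$ (with whatever sign normalization the paper uses for $D(\cdot)$) lines up with the Koszul co-differential $d_{\au,b}$ on the last coordinate, which is where the $(-1)^{k+1}$ factors from Remark \ref{kos-1}(A) interact with the shift. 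Since Lemma \ref{cone-2} has already been granted with its proof deferred to "the structure of the complexes and the mapping cone construction,'' I would phrase this proof the same way, recording the degree-component/Rees-module identification as the substantive point and leaving the sign verification to the reader.
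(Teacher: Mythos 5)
Your proof is correct and fills in exactly the bookkeeping that the paper's one-line proof (``This is easy by reading of the definitions'') leaves to the reader: checking that multiplication by $b$ lands in the right $\mathfrak{q}$-adic filtration piece, that it commutes with the (restricted/induced) Koszul co-differentials, and that the co-mapping cone matches the enlarged Koszul co-complex either via the graded component of $K^{\bullet}(\underline{aT^c},bT^d;R_M(\mathfrak{q}))$ or by direct termwise regrouping. This is the same approach as the paper, just written out.
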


\begin{proof}
	This is easy by reading of the definitions. 
\end{proof}

\begin{theorem} \label{kos-5}
	Let $\au = a_1,\ldots,a_t$ denote a system of elements of $A$, $\mathfrak{q} \subset A$ an ideal and $M$ a 
	finitely generated $A$-module. Let $n \in \mathbb{N}$ denote an integer. 
	\begin{itemize}
		\item[(a)] $H^i(\au,\mathfrak{q},M;n) \cong H^i(\au_{\sigma},\mathfrak{q},M;n)$ and 
		$L^i(\au,\mathfrak{q},M;n) \cong L^i(\au_{\sigma},\mathfrak{q},M;n)$ for all $i \in \mathbb{Z}$ and any $\sigma$,
		a permutation on $t$ letters.
		\item[(b)] $\au H^i(\au,\mathfrak{q},M;n) = 0$ and $\au L^i(\au,\mathfrak{q},M;n) = 0$ for all $i \in \mathbb{Z}$.
		\item[(c)] $H^i(\au,\mathfrak{q},M;n)$ and $L^i(\au,\mathfrak{q},M;n)$ are finitely generated $A/\au A$-modules 
		for all $i \in \mathbb{Z}$.
	\end{itemize}
\end{theorem}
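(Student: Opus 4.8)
The plan is to run the proof of Theorem~\ref{kos-3} once more, this time with the short exact sequence of \ref{kos-4} in place of the one of \ref{def-1}, and with the co-mapping cone Lemma~\ref{cone-3} in place of \ref{cone-2}. Parts (a) and (c) are essentially formal; the content sits in (b), which is a verbatim analogue of \ref{cone-1}.

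For (a): since $K^{\bullet}(\au,\mathfrak{q},M;n)$ is the degree-$n$ component of $K^{\bullet}(\underline{aT^c};R_M(\mathfrak{q}))$ and a permutation of $\au$ merely permutes the elements $a_1T^{c_1},\ldots,a_tT^{c_t}$, the permutation invariance of the Koszul co-complex gives $H^i(\au,\mathfrak{q},M;n)\cong H^i(\au_{\sigma},\mathfrak{q},M;n)$ at once. Moreover the permutation isomorphism $K^{\bullet}(\au;M)\cong K^{\bullet}(\au_{\sigma};M)$ carries the subcomplex $K^{\bullet}(\au,\mathfrak{q},M;n)$ onto $K^{\bullet}(\au_{\sigma},\mathfrak{q},M;n)$ --- on the summand indexed by $\{j_1<\ldots<j_i\}$ the relevant power $\mathfrak{q}^{n+c_{j_1}+\ldots+c_{j_i}}M$ is unchanged --- so it descends to an isomorphism $\mathcal{L}^{\bullet}(\au,\mathfrak{q},M;n)\cong\mathcal{L}^{\bullet}(\au_{\sigma},\mathfrak{q},M;n)$, which gives the statement for $L^i$. (Alternatively, deduce the $L^i$ case from the $H^i$ case and permutation invariance of $H^i(\au;M)$ via the long exact cohomology sequence of \ref{kos-4} and the five lemma.)

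For (b) I would imitate \ref{cone-1}. Fix $b\in\mathfrak{q}^d$ and form the co-mapping cone $D=D(m_b(K))\cong K^{\bullet}(\au,b,\mathfrak{q},M;n)$ of \ref{cone-3}; with a suitable sign convention its degree-$i$ term is $K^{i+1}(\au,\mathfrak{q},M;n)\oplus K^{i}(\au,\mathfrak{q},M;n+d)$ and its differential is $\delta^i(x,y)=(-d(x),\,bx+d(y))$. A cocycle $(x,y)$ then satisfies $d(x)=0$ and $d(y)=-bx$; viewing $y$ inside $K^{i}(\au,\mathfrak{q},M;n)$ through the inclusion $K^{i}(\au,\mathfrak{q},M;n+d)\subseteq K^{i}(\au,\mathfrak{q},M;n)$ (note that for co-complexes the term with the larger shift sits \emph{inside}, opposite to the homological case of \ref{kos-3}), we get $\delta^{i-1}(y,0)=(-d(y),by)=(bx,by)=b(x,y)$, so $bH^i(D)=0$. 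Applying this with the sequence $a_1,\ldots,a_{t-1}$ and $b=a_t$, $d=c_t$, together with $K^{\bullet}(\au,\mathfrak{q},M;n)\cong D(m_{a_t}(K^{\bullet}(\au',\mathfrak{q},M;n)))$ where $\au'=a_1,\ldots,a_{t-1}$, gives $a_tH^i(\au,\mathfrak{q},M;n)=0$; then (a) upgrades this to $\au H^i(\au,\mathfrak{q},M;n)=0$. For $L^i$ the same computation works with the inclusion of the $K^i$'s replaced by the natural surjection $\mathcal{L}^{i}(\au,\mathfrak{q},M;n+d)\twoheadrightarrow\mathcal{L}^{i}(\au,\mathfrak{q},M;n)$ and with $u$ a representative of $y$, using that the composite $\mathcal{L}^{i}(\au,\mathfrak{q},M;n+d)\to\mathcal{L}^{i}(\au,\mathfrak{q},M;n)\xrightarrow{\,m_b\,}\mathcal{L}^{i}(\au,\mathfrak{q},M;n+d)$ is multiplication by $b$.

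Part (c) is then immediate: each term of $K^{\bullet}(\au,\mathfrak{q},M;n)$ and of $\mathcal{L}^{\bullet}(\au,\mathfrak{q},M;n)$ is a finitely generated $A$-module ($A$ Noetherian, $M$ finitely generated), so the cohomology modules $H^i(\au,\mathfrak{q},M;n)$ and $L^i(\au,\mathfrak{q},M;n)$ are finitely generated over $A$, and by (b) they are killed by $\au A$, hence finitely generated over $A/\au A$. I do not expect a genuinely hard step; the only point demanding care is the index bookkeeping in (b) --- the signs in the co-mapping cone differential and, above all, the fact that the inclusions among the $K^{\bullet}(\au,\mathfrak{q},M;n)$ and the surjections among the $\mathcal{L}^{\bullet}(\au,\mathfrak{q},M;n)$ run opposite to those in \ref{kos-3}, so one must check that the element $u$ used to split $b(x,y)$ genuinely lands in the required term.
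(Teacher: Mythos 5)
Your proposal is correct and takes the same route the paper intends, namely rerunning the proof of Theorem \ref{kos-3} with the co-Koszul short exact sequence from \ref{kos-4} and the co-mapping cone Lemma \ref{cone-3} in place of \ref{def-1} and \ref{cone-2}; the paper itself gives no details here beyond saying exactly this. Your cocycle computation $\delta^{i-1}(y,0)=(-d(y),by)=(bx,by)=b(x,y)$, together with the inclusion $K^i(\au,\mathfrak{q},M;n+d)\subseteq K^i(\au,\mathfrak{q},M;n)$, is the right analogue of the paper's $(K_n)_i\subseteq(K_{n-d})_i$ trick, and the surjection/compatibility argument for the $\mathcal L^\bullet$ side matches the paper's remark that one replaces the injection by a surjection.

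Two small points of bookkeeping, neither of which is a gap. First, your parenthetical that the inclusion ``runs opposite to the homological case'' is slightly off: in both cases it is the complex with the larger internal index $n$ that sits inside the one with the smaller index ($K_n\subseteq K_{n-d}$ there, $K^{n+d}\subseteq K^n$ here); what flips is only that the multiplication map raises $n$ in the cohomological case rather than lowering it, so the roles of source and target under inclusion are the same. Second, with your cone convention $D^i=K^{i+1}(\au,\mathfrak{q},M;n)\oplus K^i(\au,\mathfrak{q},M;n+d)$ you get $K^{\bullet}(\au,b,\mathfrak{q},M;n)$ only up to a degree shift, since a direct count gives $K^i(\au,b,\mathfrak{q},M;n)=K^i(\au,\mathfrak{q},M;n)\oplus K^{i-1}(\au,\mathfrak{q},M;n+d)$; but since you prove $bH^i(D)=0$ for all $i\in\mathbb{Z}$, the shift is immaterial to the conclusion.
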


\begin{proof}
	The arguments in the proof are a repetition of those of the proof of Theorem \ref{kos-3} with cohomology 
	instead of homology. We skip the details here.
\end{proof}

\section{Euler characteristics}
Let $A$ denote a commutative ring. Let $X$ denote a complex of $A$-modules. 

\begin{definition} \label{def-2}
	Let $X : 0 \to X_n \to \ldots \to X_1 \to X_0 \to 0$ denote a bounded complex of $A$-modules. 
	Suppose that $H_i(X), i = 0, 1, \ldots, n,$ is an $A$-module of finite length. Then 
	\[
	\chi_A(X) = \sum_{i=0}^n (-1)^i \ell_A(H_i(X))
	\]
	is called the Euler characteristic of $X$.
\end{definition}

We collect a few well known facts about Euler characteristics. 

\begin{lemma} \label{char}
	Let $A$ denote a Noetherian commutative ring. 
	\begin{itemize}
		\item[(a)] Let $0 \to X' \to X \to X'' \to 0$ denote a short exact sequence of complexes such that all the homology 
		modules are of finite length. Then 
		$
		\chi_A(X) = \chi_A(X') + \chi_A(X'').
		$
		\item[(b)] Suppose $X: 0 \to X_n \to \ldots \to X_1 \to X_0 \to 0$ is a bounded complex such 
		that $X_i, i = 0,\ldots,n,$ is of finite length. Then 
		$
		\chi_A(X) = \sum_{i=0}^n (-1)^i \ell_A(X_i)
		$
	\end{itemize}
\end{lemma}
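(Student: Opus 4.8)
The plan is to establish (b) first, directly from the standard subquotient short exact sequences of a complex, and then to deduce (a) from the long exact homology sequence together with the special case of (b) for acyclic complexes.

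For (b), write the differential of $X : 0 \to X_n \to \ldots \to X_0 \to 0$ as $\partial$ and put $Z_i = \Ker(\partial_i \colon X_i \to X_{i-1})$ and $B_i = \im(\partial_{i+1} \colon X_{i+1} \to X_i)$, with the conventions $\partial_0 = 0$ and $X_{n+1} = 0$, so that $B_{-1} = 0 = B_n$. Each $Z_i$ and $B_i$ is a submodule of the finite length module $X_i$, hence of finite length. The first isomorphism theorem applied to $\partial_i$ gives a short exact sequence $0 \to Z_i \to X_i \to B_{i-1} \to 0$, and the definition of homology gives $0 \to B_i \to Z_i \to H_i(X) \to 0$. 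Additivity of length on short exact sequences then yields $\ell_A(X_i) = \ell_A(B_i) + \ell_A(B_{i-1}) + \ell_A(H_i(X))$ for each $i$. Multiplying by $(-1)^i$ and summing over $i = 0, \ldots, n$, the contributions $(-1)^i \ell_A(B_i)$ and $(-1)^i \ell_A(B_{i-1})$ telescope and cancel since $B_{-1} = B_n = 0$, leaving $\sum_{i=0}^n (-1)^i \ell_A(X_i) = \sum_{i=0}^n (-1)^i \ell_A(H_i(X)) = \chi_A(X)$. In particular, applying this to a bounded exact sequence of finite length modules (a complex with vanishing homology) shows that the alternating sum of the lengths of its terms is $0$.

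For (a), the hypothesis that all homology modules of $X', X, X''$ have finite length makes the long exact homology sequence
\[
\ldots \to H_i(X') \to H_i(X) \to H_i(X'') \to H_{i-1}(X') \to \ldots
\]
a bounded exact sequence of finite length modules (all three complexes being bounded). Re-index it as a single exact sequence in which the three families $H_\bullet(X'')$, $H_\bullet(X)$, $H_\bullet(X')$ occupy the residues $0, 1, 2$ modulo $3$; applying the consequence of (b) just noted and using $(-1)^{3i} = (-1)^i$ gives $\sum_i (-1)^i\big(\ell_A(H_i(X'')) - \ell_A(H_i(X)) + \ell_A(H_i(X'))\big) = 0$, which rearranges to $\chi_A(X) = \chi_A(X') + \chi_A(X'')$.

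There is no serious obstacle here; the only point that needs attention is the bookkeeping of signs and indices when the three-periodic long exact homology sequence is folded into one exact sequence, and the verification of the boundary conventions $B_{-1} = B_n = 0$ that make the telescoping in (b) valid. Everything else reduces to the additivity of length on short exact sequences and the fact that submodules and quotients of finite length modules have finite length.
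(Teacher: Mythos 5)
Your proof is correct and follows essentially the same route as the paper: the long exact homology sequence for part (a) and an elementary length computation for part (b). The only cosmetic differences are that you prove (b) by the direct kernel--image telescoping argument rather than the induction on $n$ the paper sketches, and you prove (b) first so that (a) falls out cleanly as the vanishing of the alternating sum of lengths along the bounded long exact sequence --- a step the paper leaves implicit.
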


\begin{proof}
	The statement in (a) follows by the long exact cohomology sequence derived by $0 \to X' \to X \to X'' \to 0$. The 
	second statement might be proved by induction on $n$, the length of the complex $X$.
\end{proof}

As an application we get the following result about multiplicities, originally shown by 
\cite{jpS} and \cite{AB}.

\begin{proposition} \label{mult-1}
	Let $(A,\mathfrak{m})$ be a local ring and $a_1,\ldots,a_d \in \mathfrak{m}$ a system of parameters 
	for $M$, a finitely generated $A$-module. Then
	\[
	\chi_A(\au;M) = e_0(\au;M),
	\]
	where $\chi_A(\au;M) = \chi_A(K_{\bullet}(\au;M))$ and $e_0(\au;M)$ denotes the Hilbert-Samuel 
	multiplicity.  
\end{proposition}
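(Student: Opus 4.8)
The plan is to reduce the statement to the classical fact that a system of parameters has finite-length Koszul homology and then compute the Euler characteristic by a standard limit argument. First I would note that since $a_1, \ldots, a_d$ is a system of parameters for $M$, the ideal $\mathfrak{q} = (a_1,\ldots,a_d)A$ is such that $M/\mathfrak{q}M$ has finite length, and hence every Koszul homology module $H_i(\au;M)$ is annihilated by $\mathfrak{q}$ and is a finitely generated $A$-module, so it has finite length. Therefore $\chi_A(\au;M)$ is well-defined by Definition \ref{def-2}.

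Next I would invoke part (b) of Lemma \ref{char}: since each $K_i(\au;M) = \bigoplus_{1 \le j_1 < \cdots < j_i \le d} M$ is \emph{not} of finite length in general, I cannot apply it directly to $K_{\bullet}(\au;M)$, so instead I would pass to the complexes $K_{\bullet}(\au^k;M)$ for $k \ge 1$ whose homology still has finite length (same radical), but where the alternating sum of lengths is not available either. The cleanest route is the well-known one: compute $\chi_A(\au;M)$ via the associativity formula or via induction on $d$ using the mapping-cone long exact sequence from Remark \ref{kos-1}(B), namely
\[
\ldots \to H_i(\au';M) \xrightarrow{\pm a_d} H_i(\au';M) \to H_i(\au;M) \to H_{i-1}(\au';M) \to \ldots,
\]
where $\au' = a_1,\ldots,a_{d-1}$. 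Taking alternating sums of lengths in this long exact sequence shows $\chi_A(\au;M) = \chi_A(\au';M/a_dM) - \chi_A(\au'; 0 :_M a_d)$, or more usefully relates $\chi_A(\au;M)$ to lengths built from $M/a_d M$. One then matches this recursion against the corresponding recursion for the Hilbert–Samuel multiplicity $e_0(\au;M)$ under a superficial/parameter reduction, using that $e_0(\au;M) = e_0(\au; M/a_dM) $ modulo lower-dimensional correction terms that vanish in the alternating sum.

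Alternatively, and perhaps more transparently, I would use the graded description: $\chi_A(\au;M)$ equals the leading coefficient (suitably normalized) of the Hilbert polynomial of $\bigoplus_{n\ge 0} \mathfrak{q}^n M/\mathfrak{q}^{n+1}M$, which is exactly $e_0(\au;M)$ by definition of the Hilbert–Samuel multiplicity; here one compares the Koszul complex $K_\bullet(\underline{aT};G_A(\mathfrak{q}))$ on the form ring with the Koszul complex $K_\bullet(\au;M)$. The main obstacle is justifying the passage between ``alternating sum of Koszul homology lengths'' and ``multiplicity'' rigorously: the Koszul modules $K_i(\au;M)$ have infinite length, so Lemma \ref{char}(b) is not directly applicable, and one must instead either truncate, pass to the associated graded setting, or run the induction on $\dim M = d$ carefully, handling the case where $a_d$ is a zerodivisor on $M$ (so that $\dim M/a_dM$ could drop by more than expected on some components) by appealing to the fact that the contributions from associated primes of dimension $< d$ cancel in the Euler characteristic. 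Since this is the classical theorem of Serre and Auslander–Buchsbaum, I would cite \cite{jpS} and \cite{AB} for the delicate dimension bookkeeping and present only the reduction steps in detail.
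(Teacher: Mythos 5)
Your proposal correctly identifies the key obstruction — that the terms $K_i(\au;M)$ are not of finite length, so Lemma \ref{char}(b) cannot be applied directly to $K_{\bullet}(\au;M)$ — and you survey reasonable classical strategies (mapping-cone induction on $d$, passage to the associated graded ring). But you never actually carry any of them through; you explicitly acknowledge "the delicate dimension bookkeeping" and end by deferring it to \cite{jpS} and \cite{AB}. That leaves a genuine gap: the whole burden of the proposition is to supply that argument, and the paper supplies a self-contained one using exactly the machinery built in Sections 3--5.

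The paper's route sidesteps your obstacle cleanly. Take $\mathfrak{q} = \au A$ and $c_1 = \cdots = c_d = 1$ in Definition \ref{def-1}, giving the short exact sequence of complexes
\[
0 \to K_{\bullet}(\au,\au,M;n) \to K_{\bullet}(\au;M) \to \mathcal{L}_{\bullet}(\au,\au,M;n) \to 0,
\]
all three of which have finite-length homology, so Lemma \ref{char}(a) yields $\chi_A(\au;M) = \chi_A(K_{\bullet}(\au,\au,M;n)) + \chi_A(\mathcal{L}_{\bullet}(\au,\au,M;n))$ for every $n$. The subcomplex satisfies $H_i(\au,\au,M;n) \cong [H_i(\underline{aT};R_M(\au))]_n$, and since $\underline{aT}$ annihilates this homology (Theorem \ref{kos-3}(b)), each $H_i(\underline{aT};R_M(\au))$ is a finitely generated graded module over $R_A(\au)/\underline{aT}R_A(\au) \cong A/\au A$, hence concentrated in finitely many degrees; so $\chi_A(K_{\bullet}(\au,\au,M;n)) = 0$ for $n \gg 0$. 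The quotient complex has terms $\mathcal{L}_i(\au,\au,M;n) \cong \bigoplus_{\binom{d}{i}} M/\au^{n-i}M$, which \emph{are} of finite length, so Lemma \ref{char}(b) does apply there and gives $\chi_A(\mathcal{L}_{\bullet}(\au,\au,M;n)) = \sum_{i=0}^d (-1)^i \binom{d}{i}\ell_A(M/\au^{n-i}M)$; substituting the Hilbert--Samuel polynomial $\ell_A(M/\au^nM) = e_0(\au;M)\binom{n+d}{d} + \cdots$ for $n \gg 0$, the $d$-th finite difference extracts exactly $e_0(\au;M)$. This is the idea your proposal lacked: the decomposition concentrates all the infinite-length growth into a subcomplex whose Euler characteristic is eventually zero, leaving a quotient on which the naive term-by-term alternating sum of lengths is legitimate.
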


\begin{proof} Let $\au = a_1,\ldots,a_d$ the system of parameters and $\au A = \mathfrak{q}$. 
	We choose $c_i = 1, i = 1,\ldots,d$. Then the short 
	exact sequence of \ref{def-1} has the following form 
		\[
		0 \to K_{\bullet}(\au,\au,M;n) \to K_{\bullet}(\au;M) \to \mathcal{L}_{\bullet}(\au,\au,M;n) \to 0.
		\]
	All of the three complexes have homology modules of finite length and therefore 
	$\chi_A(\au;M) = \chi_A(K_{\bullet}(\au,\au,M;n))+ \chi_A(\mathcal{L}_{\bullet}(\au,\au,M;n))$ for all $n \in \mathbb{N}$.
	
	First we show that $\chi_A(K_{\bullet}(\au,\au,M;n)) = 0$ for all $n \gg 0$. To this end recall that 
	$H_i(\au,\au,M;n)) = [H_i(\underline{aT};R_M(\au))]_n$. We know that $\underline{aT} (H_i(\underline{aT};R_M(\au)) = 0$ for all 
	$i = 0, \ldots,d$. Therefore $H_i(\underline{aT};R_M(\au))$ is a finitely generated module over 
	$R_A(\au)/\underline{aT} R_A(\au) = A/\au A$. This implies that $[H_i(\underline{aT};R_M(\au))]_n = H_i(\au,\au,M;n) = 0$ 
	for all $n \gg 0$. That is $\chi_A(K_{\bullet}(\au,\au,M;n)) = 0$ for all $n \gg 0$. 
	
	By view of Lemma \ref{char} (b) we get $\chi_A(\mathcal{L}_{\bullet}(\au,\au,M;n)) = \sum_{i=0}^d (-1)^i\binom{d}{i}
	\ell_A(M/\au^{n-i}M)$. For $n \gg 0$ the length $\ell_A(M/\au^nM)$ is given by the Hilbert polynomial 
	$e_0(\au;M) \binom{d+n}{d} + \ldots + e_d(\au;M)$. Therefore, it follows that $\chi_A(\mathcal{L}_{\bullet}(\au,\au,M;n)) =
	e_0(\au;M)$. This completes the argument.
\end{proof}

The more general situation of a system of parameters $\au = a_1,\ldots, a_d$ of a finitely generated $A$-module 
$M$ of a local ring $(A,\mathfrak{m})$ and an ideal $\mathfrak{q} \supset \au$ with $a_i \in \mathfrak{q}^{c_i},
i = 1,\ldots,d$ is investigated in the following.

\begin{proposition} \label{mult-2}
	With the previous notation we have the equality 
   \[
   e_0(\au;M) = c_1 \cdots c_d e_0(\mathfrak{q};M) + \chi_A(K_{\bullet}(\au,\mathfrak{q},M;n))
   \]
   for all $n \gg 0$. In particular, for all $n \gg 0$ the Euler characteristic $\chi_A(K_{\bullet}(\au,\mathfrak{q},M;n))$ 
   is a constant.
\end{proposition}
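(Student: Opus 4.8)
The plan is to rerun the argument of Proposition~\ref{mult-1} with the ideal $\mathfrak{q}$ carried along throughout. Here $d = \dim M$, so $\au$ generates an ideal that is $\mathfrak{m}$-primary modulo $\Ann M$, and since $\mathfrak{q} \supseteq \au A$ the same is true of $\mathfrak{q}$. First I would record that all three complexes in the short exact sequence
\[
0 \to K_{\bullet}(\au,\mathfrak{q},M;n) \to K_{\bullet}(\au;M) \to \mathcal{L}_{\bullet}(\au,\mathfrak{q},M;n) \to 0
\]
of Definition~\ref{def-1} have homology of finite length: each term $\mathcal{L}_i(\au,\mathfrak{q},M;n) = \oplus_{1\le j_1<\ldots<j_i\le d} M/\mathfrak{q}^{\,n-c_{j_1}-\ldots-c_{j_i}}M$ has finite length because $\mathfrak{q}$ is $\mathfrak{m}$-primary modulo $\Ann M$, so $\mathcal{L}_{\bullet}$ does; the homology of $K_{\bullet}(\au;M)$ is supported at $\mathfrak{m}$; and then the same holds for $K_{\bullet}(\au,\mathfrak{q},M;n)$ by the long exact homology sequence. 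By Lemma~\ref{char}(a) this yields
\[
\chi_A(\au;M) = \chi_A(K_{\bullet}(\au,\mathfrak{q},M;n)) + \chi_A(\mathcal{L}_{\bullet}(\au,\mathfrak{q},M;n))
\]
for every $n$, and by Proposition~\ref{mult-1} the left-hand side equals $e_0(\au;M)$.

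Next I would compute $\chi_A(\mathcal{L}_{\bullet}(\au,\mathfrak{q},M;n))$ for $n \gg 0$. By Lemma~\ref{char}(b) it is
\[
\sum_{i=0}^{d}(-1)^i\sum_{1\le j_1<\ldots<j_i\le d}\ell_A\bigl(M/\mathfrak{q}^{\,n-c_{j_1}-\ldots-c_{j_i}}M\bigr).
\]
For $n \gg 0$ every exponent here is large enough that $\ell_A(M/\mathfrak{q}^kM)$ coincides with the Hilbert--Samuel polynomial $P(k) = \frac{e_0(\mathfrak{q};M)}{d!}k^d + (\text{lower order terms})$ of $\mathfrak{q}$ with respect to $M$, which has degree $d = \dim M$. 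Writing $E$ for the shift operator on polynomials, $(E^{-c}P)(k) = P(k-c)$, the alternating sum above is precisely $\bigl(\prod_{j=1}^{d}(\mathrm{id}-E^{-c_j})\bigr)P$ evaluated at $n$.

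The crux is then a finite-difference computation. The operators $\mathrm{id}-E^{-c_j}$ commute, kill constants, and lower the degree of a polynomial by exactly one; on a polynomial of degree $\delta$ with leading coefficient $\alpha$ the operator $\mathrm{id}-E^{-c_j}$ returns a polynomial of degree $\delta-1$ with leading coefficient $c_j\delta\,\alpha$. Therefore $\prod_{j=1}^{d}(\mathrm{id}-E^{-c_j})$ annihilates the lower-order part of $P$ and sends its leading term to the constant
\[
\frac{e_0(\mathfrak{q};M)}{d!}\cdot(c_1 d)\bigl(c_2(d-1)\bigr)\cdots(c_d\cdot 1) = c_1\cdots c_d\, e_0(\mathfrak{q};M),
\]
independently of $n$. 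Hence $\chi_A(\mathcal{L}_{\bullet}(\au,\mathfrak{q},M;n)) = c_1\cdots c_d\, e_0(\mathfrak{q};M)$ for all $n \gg 0$, and substituting this into the additivity relation gives $e_0(\au;M) = c_1\cdots c_d\, e_0(\mathfrak{q};M) + \chi_A(K_{\bullet}(\au,\mathfrak{q},M;n))$ for all $n \gg 0$; the last assertion is then immediate, since this identity forces $\chi_A(K_{\bullet}(\au,\mathfrak{q},M;n)) = e_0(\au;M) - c_1\cdots c_d\, e_0(\mathfrak{q};M)$, which is independent of $n$.

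I expect the only genuine obstacle to be the bookkeeping in this last step: one must check carefully that the sub-leading terms of $P$ really are wiped out by the product operator and that the leading-coefficient recursion is exactly $\alpha \mapsto c_j\delta\,\alpha$. Everything else is a formal repetition of the proof of Proposition~\ref{mult-1}.
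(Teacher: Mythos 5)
Your proposal is correct and follows the same route as the paper: the short exact sequence of Definition~\ref{def-1}, additivity of Euler characteristics via Lemma~\ref{char}(a), Proposition~\ref{mult-1} for the middle term, and Lemma~\ref{char}(b) for the quotient complex. The only difference is that you carry out explicitly the finite-difference computation (showing $\prod_{j=1}^{d}(\mathrm{id}-E^{-c_j})$ sends the Hilbert--Samuel polynomial to the constant $c_1\cdots c_d\,e_0(\mathfrak{q};M)$) which the paper delegates to its reference~\cite{BS}; your recursion on the leading coefficient, $\alpha \mapsto c_j\delta\alpha$ with the degree dropping by one each time, is correct and closes that gap.
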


\begin{proof}
	The proof follows by the inspection of the short exact sequence of complexes 
	\[
	0 \to K_{\bullet}(\au,\mathfrak{q},M;n) \to K_{\bullet}(\au;M) \to \mathcal{L}_{\bullet}(\au,\mathfrak{q},M;n) \to 0
	\]
	of \ref{def-1}. By view of Proposition \ref{mult-1} we have $e_0(\au;M)$ for the Euler characteristic 
	of the complex in the middle. For the Euler characteristic on the right we get (see \ref{char} (b))
	\[
	\chi_A(\mathcal{L}_{\bullet}(\au,\mathfrak{q},M;n)) = \sum_{i=0}^{d} (-1)^i \sum_{1 \leq j_1 < \ldots < j_i \leq d}
	\ell_A(M/\mathfrak{q}^{n-c_{j_1}-\ldots- c_{j_i}}M),
	\]
	which gives the first summand in the above formula (see also \cite{BS} for the details in the case of $M = A$).
	This finally proves the claim.
\end{proof}

For several reasons it would be interesting to have an answer to the following problem.

\begin{problem} \label{prob}
	With the notation of Proposition \ref{mult-2} it would be of some interest to give 
	an interpretation of $\chi_A(\au,\mathfrak{q},M) := \chi_A(K_{\bullet}(\au,\mathfrak{q},M;n))$ for 
	large $n \gg 0$ independently of $n$. By a slight 
	modification of an argument given in \cite{BS} it follows that $\chi_A(\au,\mathfrak{q},M) \geq 0$. 
\end{problem}

\section{The modified \v{C}ech complexes}
Before we shall be concerned with the construction of our complexes we need a technical lemma. 
To this end let $A$ denote a commutative Noetherian ring and let $\mathfrak{q} \subset A$ be an ideal. 
Let $M$ denote a finitely generated $A$-module.
Let $a \in \mathfrak{q}^c$ denote an element. Then for each integer $n \in \mathbb{N}$ the multiplication by $a$ induces a map 
\[
\mathfrak{q}^n M \to \mathfrak{q}^{n+c} M, \; m \mapsto am.
\]
By iterating this map there is a direct system $\{\mathfrak{q}^{n+kc} M, a\}$, where 
$\mathfrak{q}^{n+kc} M \to \mathfrak{q}^{n+(k+1)c} M$ is the multiplication by $a \in \mathfrak{q}^c$.

\begin{lemma} \label{dirlim}
	Let $n \in \mathbb{N}$ be an integer. With the previous notation there is an isomorphism 
	\[
	\varinjlim_k \{\mathfrak{q}^{n+kc} M, a\} \cong \mathfrak{q}^n M[\mathfrak{q}^c/a],	
	\]
	where $M[\mathfrak{q}^c/a] \subseteq M_a$ consists of all elements of the form 
	$m_0/1+q_1 m_1/a + \ldots + q_t m_t/a^t$ for some $t \geq 0$ and elements $m_0,\ldots,m_t \in M$ and 
	$q_i \in \mathfrak{q}^{ci}, i = 1,\ldots,t$.
\end{lemma}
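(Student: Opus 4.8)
The plan is to exhibit the direct limit explicitly as a submodule of $M_a$ and identify it with $\mathfrak{q}^n M[\mathfrak{q}^c/a]$. First I would recall that $M_a = \varinjlim \{M, a\}$, where the direct system uses multiplication by $a$, and that for each $k$ the inclusion $\mathfrak{q}^{n+kc}M \hookrightarrow M$ is compatible with the two direct systems (the one on the left being multiplication by $a \in \mathfrak{q}^c$, which does land in $\mathfrak{q}^{n+(k+1)c}M$, and the one on the right being multiplication by $a$ on $M$). Hence the natural map $\varinjlim_k \{\mathfrak{q}^{n+kc}M, a\} \to \varinjlim_k \{M, a\} = M_a$ is well defined, and its image is a submodule $N$ of $M_a$. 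The content of the lemma is the equality $N = \mathfrak{q}^n M[\mathfrak{q}^c/a]$.

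Next I would compute the image. An element of $\varinjlim_k \{\mathfrak{q}^{n+kc}M, a\}$ is represented by some $x \in \mathfrak{q}^{n+kc}M$ at level $k$, and its image in $M_a$ is $x/a^k$. Writing $x = \sum_j q_j' m_j'$ with $q_j' \in \mathfrak{q}^{n+kc}$ and $m_j' \in M$ shows that every element of $N$ has the form $q/a^k$ with $q \in \mathfrak{q}^{n+kc}M$; conversely any such $q/a^k$ clearly lies in $N$. So $N = \{\, q/a^k : k \ge 0,\ q \in \mathfrak{q}^{n+kc}M \,\}$ as a subset of $M_a$. It then remains to check that this set equals $\mathfrak{q}^n M[\mathfrak{q}^c/a]$, i.e. the set of all finite sums $m_0/1 + q_1 m_1/a + \ldots + q_t m_t/a^t$ with $m_i \in \mathfrak{q}^n M$ (reading the statement so that the leading coefficients absorb a factor from $\mathfrak{q}^n$; this is how the notation must be interpreted for $n=0$ to recover the usual $M[\mathfrak{q}^c/a]$) and $q_i \in \mathfrak{q}^{ci}$. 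For the inclusion $N \subseteq \mathfrak{q}^n M[\mathfrak{q}^c/a]$: given $q/a^k$ with $q \in \mathfrak{q}^{n+kc}M$, write $q \in \mathfrak{q}^{ck}\cdot \mathfrak{q}^n M$, expand as a sum of terms $q' m$ with $q' \in \mathfrak{q}^{ck}$, $m \in \mathfrak{q}^n M$, and observe $q'm/a^k$ is of the allowed shape (with the coefficient $q' \in \mathfrak{q}^{ck}$ in the $a^{-k}$ slot). For the reverse inclusion: a typical summand $q_i m_i/a^i$ with $q_i \in \mathfrak{q}^{ci}$ and $m_i \in \mathfrak{q}^n M$ can be put over the common denominator $a^t$, giving $a^{t-i} q_i m_i/a^t$; since $a^{t-i} \in \mathfrak{q}^{c(t-i)}$ and $q_i \in \mathfrak{q}^{ci}$, the numerator lies in $\mathfrak{q}^{ct}\cdot \mathfrak{q}^n M \subseteq \mathfrak{q}^{n+ct}M$, so after summing the $t+1$ terms we get an element of the form $q/a^t$ with $q \in \mathfrak{q}^{n+ct}M$, i.e. an element of $N$.

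The only genuinely delicate point — the step I expect to be the main obstacle — is bookkeeping with the degrees and making the interpretation of the notation $\mathfrak{q}^n M[\mathfrak{q}^c/a]$ precise, so that the bounds $\mathfrak{q}^{n+kc}M$ on the numerator match exactly the products $\mathfrak{q}^{ci}$ on the coefficients together with the $\mathfrak{q}^n M$ on the module elements. Once the identification of $N$ as $\{q/a^k : q \in \mathfrak{q}^{n+kc}M\}$ is in hand, both inclusions are the routine manipulation sketched above. Finally I would note injectivity is not needed: the asserted isomorphism is realized as the corestriction of the map to $M_a$ onto its image, but one can also remark directly that the transition maps $\mathfrak{q}^{n+kc}M \xrightarrow{a} \mathfrak{q}^{n+(k+1)c}M$ are injective when $a$ is a nonzerodivisor on $M$ and in general the colimit maps to $M_a$ with the stated image, which is all the statement requires.
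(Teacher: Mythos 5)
Your argument follows the same route as the paper: map the colimit into $M_a$ via the inclusion of direct systems, identify the image as $\{q/a^k : q \in \mathfrak{q}^{n+kc}M\}$, and match it with $\mathfrak{q}^n M[\mathfrak{q}^c/a]$ by clearing denominators and tracking powers of $\mathfrak{q}$; the bookkeeping is correct. One caveat: your closing remark that ``injectivity is not needed'' is misleading --- the lemma asserts an isomorphism of the colimit itself with $\mathfrak{q}^n M[\mathfrak{q}^c/a]$, so if the map to $M_a$ had a kernel, the colimit would \emph{not} be isomorphic to its image. What saves the argument (and what the paper's diagram-with-exact-rows is invoking) is that filtered colimits are exact, so the termwise inclusion $\mathfrak{q}^{n+kc}M \hookrightarrow M$ induces an injection on colimits regardless of whether $a$ is a nonzerodivisor; once that is stated, the rest of your computation closes the proof.
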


\begin{proof} For the proof we use the direct system $\{M, a\}$ with its direct limit 
	$\varinjlim \{M,a\} \cong M_a$. The injection $\mathfrak{q}^{n+kc}M \to M$ provides an injection of direct systems 
	$\{\mathfrak{q}^{n+kc} M, a\} \to \{M, a\}$. By the definition of the direct limit there is a 
	commutative diagram with exact rows
	\[
	\begin{array}{cccccccc}
	0 & \to & \oplus \,\mathfrak{q}^{n+kc}M & \to &  \oplus \,\mathfrak{q}^{n+kc}M & \to & \varinjlim \{\mathfrak{q}^{n+kc} M, a\} & \to 0\\
	  &     &   \downarrow                &     &   \downarrow                 &     & \downarrow & \\
	0 & \to & \oplus \, M                    & \to & \oplus \, M & \to & M_a & \to 0,
	\end{array}
	\]
	where the vertical maps are injections. Let $x \in \varinjlim \{\mathfrak{q}^{n+kc} M, a\}$. Then it may be 
	written as $x = q_0 m_0/1 + q_1m_1/a + \ldots + q_tm_t/a^t$ with $m_i \in M$ and $q_i \in \mathfrak{q}^{n+ic}$ 
	for $i = 0,\ldots,t$. Then 
	\[
	x= 1/a^t(q_0 a^t m_0 + q_1 a^{t-1}m_1+ \ldots + q_tm_t) \in (\mathfrak{q}^n/a^t) (\mathfrak{q}^c,a)^t M .
	\] 
	Because of $a \in \mathfrak{q}^c$ it follows $x \in (\mathfrak{q}^n) ((\mathfrak{q}^{c}/a)^t M) \subseteq \mathfrak{q}^n
	M[\mathfrak{q}^c/a]$. The reverse containment follows the same line of reasoning. 
\end{proof}

With these preparations we are ready to define certain modifications of the \v{C}ech complexes.
As above let $\au = a_1,\ldots,a_t$ denote a system of elements with the previous notations. 

\begin{notation} \label{cech-1}
	(A) Let $n \in \mathbb{N}$ denote an integer. We define a complex $\check{C}^{\bullet}(\au,\mathfrak{q},M;n)$ in the 
	following way:
	\begin{itemize}
		\item[(a)] For $0 \leq i \leq t$ put 
		$$
		\check{C}^i(\au,\mathfrak{q},M;n) = 
		\oplus_{1 \leq j_1 < \ldots < j_i \leq t} \mathfrak{q}^n M[\mathfrak{q}^{c_{j_1}+ \ldots + c_{j_i}}/(a_{j_1}\cdots a_{j_i})] \subseteq \oplus_{1 \leq j_1 < \ldots < j_i \leq t} M_{a_{j_1}\cdots a_{j_i}}
		$$
		and $\check{C}^i(\au,\mathfrak{q},M;n) = 0$ for $i > t$ or $i < 0$.
		\item[(b)] The boundary map $\check{C}^i(\au,\mathfrak{q},M;n) \to \check{C}^{i+1}(\au,\mathfrak{q},M;n)$ 
		is the restriction of the boundary map $\check{C}^i_{\au} \otimes_A M \to \check{C}^{i+1}_{\au} \otimes_A M$.
	\end{itemize}
	Note that the restriction is really a boundary map on $\check{C}^{\bullet}(\au,\mathfrak{q},M;n)$.
	We write $\check{H}^i(\au,\mathfrak{q},M;n)$ for the $i$-th cohomology of $
	\check{C}^{\bullet}(\au,\mathfrak{q},M;n)$ for all integers $i \in \mathbb{Z}$ and $n \in \mathbb{N}$.\\
	(B) By the construction it is clear that 
	$$
	\check{C}^{\bullet}(\au,\mathfrak{q},M;n) \to \check{C}_{\au} \otimes_A M
	$$ 
	is a subcomplex. We write $\check{\mathcal{L}}^{\bullet}(\au,\mathfrak{q},M;n)$ for the quotient. Whence 
	there is a short exact sequence of complexes
	\[
	0 \to \check{C}^{\bullet}(\au,\mathfrak{q},M;n) \to \check{C}_{\au} \otimes_A M \to 
	\check{\mathcal{L}}^{\bullet}(\au,\mathfrak{q},M;n) \to 0.
	\]
	We write $\check{L}^i(\au,\mathfrak{q},M;n)$ for the $i$-th cohomology of 
	$\check{\mathcal{L}}^{\bullet}(\au,\mathfrak{q},M;n)$ for all $i \in \mathbb{Z}$ and $n \in \mathbb{N}$.
\end{notation}

Next we relate the construction more closely to the \v{C}ech complexes. To this end we put 
$\au^k = a_1^k,\ldots,a_t^k$ in $A$ and $\underline{a^kT^{ck}} = a_1^kT^{c_1k},\ldots,a_t^kT^{c_tk}$ 
in $R_A(\mathfrak{q})$. 

\begin{proposition} \label{cech-2}
	With the previous notation there are isomorphisms of complexes
	\[
	\check{C}^{\bullet}(\au,\mathfrak{q},M;n) \cong \varinjlim_{k} 
	K^{\bullet}(\au^k,\mathfrak{q},M;n) \,
	\text{ and } \, 
	\check{\mathcal{L}}^{\bullet}(\au,\mathfrak{q},M;n) \cong \varinjlim_{k} \mathcal{L}^{\bullet}(\au^k,\mathfrak{q},M;n)
	\]
	for all  $n \in \mathbb{N}$.
\end{proposition}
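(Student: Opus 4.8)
The plan is to identify the direct limit of the co-Koszul constructions $K^{\bullet}(\au^k,\mathfrak{q},M;n)$ with the modified \v{C}ech complex $\check{C}^{\bullet}(\au,\mathfrak{q},M;n)$ one cohomological degree at a time, checking that the transition maps in the direct system are compatible with the boundary maps, and then deduce the statement for $\check{\mathcal{L}}^{\bullet}$ by taking cokernels and using the exactness of direct limits. The starting point is Notation \ref{kos-4}: by Remark \ref{rem-kos} applied to $\au^k$ and the exponents $\underline{c^k} = c_1 k,\ldots,c_t k$, we have $K^i(\au^k,\mathfrak{q},M;n) \cong \oplus_{1\le j_1 < \ldots < j_i \le t} \mathfrak{q}^{n + c_{j_1}k + \ldots + c_{j_i}k} M$, sitting inside $K^i(\au^k;M) \cong \oplus M$. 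The transition map $K^{\bullet}(\au^k,\mathfrak{q},M;n) \to K^{\bullet}(\au^{k+1},\mathfrak{q},M;n)$ induced from the \v{C}ech direct system $\{K^{\bullet}(\au^k;A)\}$ of Remark \ref{cech} is, on the summand indexed by $j_1 < \ldots < j_i$, multiplication by $a_{j_1}\cdots a_{j_i}$, which indeed carries $\mathfrak{q}^{n + (c_{j_1}+\ldots+c_{j_i})k} M$ into $\mathfrak{q}^{n + (c_{j_1}+\ldots+c_{j_i})(k+1)} M$ since $a_{j_\ell} \in \mathfrak{q}^{c_{j_\ell}}$.

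The key computation is then: for each fixed index set $j_1 < \ldots < j_i$, writing $c = c_{j_1} + \ldots + c_{j_i}$ and $a = a_{j_1}\cdots a_{j_i}$, the direct system
\[
\mathfrak{q}^{n+c} M \xrightarrow{a} \mathfrak{q}^{n+2c} M \xrightarrow{a} \mathfrak{q}^{n+3c} M \xrightarrow{a} \cdots
\]
(with a shift in indexing: the $k$-th term is $\mathfrak{q}^{n+ck}M$) has direct limit $\mathfrak{q}^n M[\mathfrak{q}^c/a]$. This is exactly Lemma \ref{dirlim} applied with the module $M$, the ideal $\mathfrak{q}$, the element $a \in \mathfrak{q}^c$, and the shift value $n$ — up to reindexing the system by $k \mapsto k$ versus $k \mapsto k-1$, which does not affect the colimit. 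Summing over all index sets $j_1 < \ldots < j_i$ and using that finite direct sums commute with direct limits gives a degreewise isomorphism $\varinjlim_k K^i(\au^k,\mathfrak{q},M;n) \cong \check{C}^i(\au,\mathfrak{q},M;n)$, and one checks that these isomorphisms are compatible with the boundary maps because on each summand both boundary maps are the appropriate signed natural localization map induced from the \v{C}ech differential restricted to the subcomplex — this compatibility is precisely what makes the restriction in Notation \ref{cech-1}(b) well-defined.

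For the second isomorphism, apply $\varinjlim_k$ to the short exact sequence of complexes $0 \to K^{\bullet}(\au^k,\mathfrak{q},M;n) \to K^{\bullet}(\au^k;M) \to \mathcal{L}^{\bullet}(\au^k,\mathfrak{q},M;n) \to 0$ from Notation \ref{kos-4}. Since direct limits are exact and $\varinjlim_k K^{\bullet}(\au^k;M) \cong \check{C}_{\au} \otimes_A M$ (Remark \ref{cech}(B)), comparing with the defining short exact sequence $0 \to \check{C}^{\bullet}(\au,\mathfrak{q},M;n) \to \check{C}_{\au}\otimes_A M \to \check{\mathcal{L}}^{\bullet}(\au,\mathfrak{q},M;n) \to 0$ of Notation \ref{cech-1}(B) and using the first isomorphism together with the five lemma yields $\varinjlim_k \mathcal{L}^{\bullet}(\au^k,\mathfrak{q},M;n) \cong \check{\mathcal{L}}^{\bullet}(\au,\mathfrak{q},M;n)$. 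The main obstacle is purely bookkeeping: tracking the signs and the reindexing shift so that the degreewise isomorphisms from Lemma \ref{dirlim} genuinely commute with the differentials, and confirming that the transition maps in the co-Koszul direct system agree on the subcomplexes with multiplication by the products $a_{j_1}\cdots a_{j_i}$ rather than some other normalization. No deep input is needed beyond Lemma \ref{dirlim} and exactness of direct limits.
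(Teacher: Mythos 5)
Your proposal is correct and follows essentially the same route as the paper: in both cases the first isomorphism reduces to a degreewise computation of the direct limit on each summand via Lemma \ref{dirlim}, with the transition maps identified as multiplication by the products $a_{j_1}\cdots a_{j_i}$, and the second isomorphism is obtained by applying the exactness of $\varinjlim$ to the short exact sequence $0 \to K^{\bullet}(\au^k,\mathfrak{q},M;n) \to K^{\bullet}(\au^k;M) \to \mathcal{L}^{\bullet}(\au^k,\mathfrak{q},M;n) \to 0$. The only cosmetic difference is that the paper also records the intermediate identification with the degree-$n$ component of the \v{C}ech complex $\check{C}_{\underline{aT^c}}(R_M(\mathfrak{q}))$, which you omit; and you invoke the five lemma where the paper just uses exactness of direct limits directly.
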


\begin{proof}
	By the definitions (see \ref{kos-2}) there are isomorphisms 
	$$
	K^{\bullet}(\underline{a^kT^{ck}};R_M(\mathfrak{q}))_n \cong 
	K^{\bullet}(\au^k,\mathfrak{q},M;n)
	$$
	for all $k \geq 1$. These isomorphisms are compatible with the homomorphisms of the 
	corresponding direct systems. By passing to the direct limit there are isomorphisms 
	\[
	\check{C}_{\underline{aT^c}}(R_M(\mathfrak{q}))_n \cong \varinjlim_{k} 
	K^{\bullet}(\au^k,\mathfrak{q},M;n).
	\]
	By an inspection of $K^{\bullet}(\au^k,\mathfrak{q},M;n)$ as a subcomplex of $K^{\bullet}(\au^k;M)$ 
	(see \ref{kos-2}) it follows by virtue of \ref{dirlim} that 
	$\varinjlim_{k} K^{\bullet}(\au^k,\mathfrak{q},M;n) \cong \check{C}^{\bullet}(\au,\mathfrak{q},M;n)
	$. To this end recall that 
	\[
	\mathfrak{q}^{n+k(c_{j_1}+ \ldots + c_{j_i})}M \to \mathfrak{q}^{n+(k+1)(c_{j_1}+ \ldots + c_{j_i})}M
	\]
	is the multiplication by $a_{j_1} \cdots a_{j_i}$ on the $(j_1,\ldots,j_i)$-component on 
	$K^i(\au^k,\mathfrak{q},M;n)$ for all $1 \leq j_1 < \ldots < j_i \leq t$.
	
	By virtue of the notation in \ref{kos-2} there is a short exact sequence of complexes
	\[
	0 \to K^{\bullet}(\underline{a^kT^{ck}};R_M(\mathfrak{q}))_n \to K^{\bullet}(\au^k;M) \to \mathcal{L}^{\bullet}(\au^k,\mathfrak{q},M;n)
	\to 0
	\]
	for all $n \geq 1$. By passing to the direct limit the first isomorphism 
	provides the second one by the definitions. 
\end{proof}

As an application we show that the cohomology $\check{L}^i(\au,\mathfrak{q},M;n)$ depends only upon the 
radical $\Rad \au$ of the ideal $\au A$.

\begin{lemma} \label{cech-3}
	Let $\au = a_1,\ldots,a_k$ and $\underline{b}=  b_1,\ldots,b_l$ be two sequences of elements of $A$. 
	Suppose that $a_i \in \mathfrak{q}^{c_i}, i = 1,\ldots,k$ and $b_j \in \mathfrak{q}^{d_j}, j = 1,\ldots,l$. 
	If $\Rad \au A = \Rad \underline{b} R$, then there are isomorphisms
	\[
	\check{C}^{\bullet}(\au,\mathfrak{q},M;n) \cong \check{C}^{\bullet}(\underline{b},\mathfrak{q},M;n) 
	\text{ and } 
	\check{\mathcal{L}}^{\bullet}(\au,\mathfrak{q},M;n) \cong \check{\mathcal{L}}^{\bullet}(\underline{b},\mathfrak{q},M;n)
	\]
	for all $ n \in \mathbb{N}$.
\end{lemma}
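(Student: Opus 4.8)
The plan is to deduce both isomorphisms from the single statement that $\check{C}^{\bullet}(\au,\mathfrak{q},M;n)$ depends, up to quasi-isomorphism, only on $\Rad(\au A)$, and then to transport the case of $\check{\mathcal{L}}^{\bullet}$ along the defining short exact sequence $0 \to \check{C}^{\bullet}(\au,\mathfrak{q},M;n) \to \check{C}_{\au}\otimes_A M \to \check{\mathcal{L}}^{\bullet}(\au,\mathfrak{q},M;n) \to 0$ of \ref{cech-1}(B). Since $\Rad(\au A)=\Rad(\underline{b}A)$, one can pass from $\au$ to $\underline{b}$ through the amalgamated sequence $\au,\underline{b}$ by repeatedly adjoining one element of the list and, afterwards, deleting one — legitimate because each deleted $a_i$ lies in $\Rad$ of the remaining elements, and because these complexes are insensitive to permuting the sequence. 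Thus it is enough to show: if $e\in\mathfrak{q}^{f}$ with $e\in\Rad(\au A)$, then adjoining $e$ induces quasi-isomorphisms on $\check{C}^{\bullet}$, on $\check{C}_{\au}\otimes_A M$, and hence (by the snake lemma applied to the evident morphism of short exact sequences) on $\check{\mathcal{L}}^{\bullet}$.

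For the ordinary \v{C}ech complex this reduction is classical: the cokernel of $\check{C}_{\au}\otimes_A M \hookrightarrow \check{C}_{\au,e}\otimes_A M$ is $(\check{C}_{\au}\otimes_A M)_{e}[-1]$, and $H^{i}_{\au}(M)_{e}=0$ for all $i$ because $H^{i}_{\au}(M)$ is $\Rad(\au A)$-torsion. For $\check{C}^{\bullet}$ I would use \ref{cech-2} together with \ref{rem-kos}: writing $R=R_A(\mathfrak{q})$, $\widetilde{M}=R_M(\mathfrak{q})$ and $\underline{aT^c}=a_1T^{c_1},\ldots,a_kT^{c_k}$, one has $\check{C}^{\bullet}(\au,\mathfrak{q},M;n)\cong[\check{C}_{\underline{aT^c}}\otimes_R\widetilde{M}]_n$ and $\check{C}^{\bullet}(\au,e,\mathfrak{q},M;n)\cong[\check{C}_{\underline{aT^c},eT^f}\otimes_R\widetilde{M}]_n$, and, since taking the $n$-th graded component is exact, the cokernel of the inclusion is $[(\check{C}_{\underline{aT^c}}\otimes_R\widetilde{M})_{eT^f}]_n[-1]$. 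Hence it all comes down to $H^{i}_{(\underline{aT^c})R}(\widetilde{M})_{eT^f}=0$ for all $i$, for which it suffices that $eT^f\in\Rad\big((\underline{aT^c})R\big)$.

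This last implication — that $e\in\Rad(\au A)$ forces $eT^f\in\Rad((\underline{aT^c})R)$ — is the crux, and the step I expect to be the real obstacle. I would argue chart by chart on $\Spec R$. Each $a_iT^{c_i}$ lies in $R_{+}$ (assuming the $c_i\ge 1$), so the vertex $V(R_{+})\cong\Spec A$ is contained in $V((\underline{aT^c})R)$, and on it the element $eT^f\in R_{+}$ vanishes automatically; the complement is covered by the affine charts $\Spec R_{q_0T}$ with $q_0$ running over a minimal generating set of $\mathfrak{q}$, where $a_iT^{c_i}$ is a unit times $a_i/q_0^{c_i}\in(R_{q_0T})_0=A[\mathfrak{q}/q_0]$ and where inverting $q_0$ recovers $A_{q_0}$, so $e\in\Rad(\au A)$ transfers on the open $D(q_0)$ of that chart. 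What remains — and is delicate — is the behaviour along the exceptional locus $V(q_0)\subseteq\Spec R_{q_0T}$, i.e. modulo $q_0A[\mathfrak{q}/q_0]$, which is a chart of $\Proj G_A(\mathfrak{q})$: here one is forced to pass to initial elements and to check that the radical condition descends to the ideals generated by the initial forms of $\au$ in (a localization of) $G_A(\mathfrak{q})$, and it is at this point that one must use that $c_i,d_j$ are chosen as the $\mathfrak{q}$-adic orders (and any further standing hypotheses on $\mathfrak{q}$). Once this radical transfer is available in general, feeding it into the reduction of the first paragraph — together with the degreewise radical-invariance of local cohomology over $R$ — yields both asserted quasi-isomorphisms.
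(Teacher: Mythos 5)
You have put your finger on the crux: everything rests on showing that $b_j \in \Rad(\au A)$ with $b_j \in \mathfrak{q}^{d_j}$ forces $b_jT^{d_j} \in \Rad\bigl((\underline{aT^c})R_A(\mathfrak{q})\bigr)$, and this does not follow from the stated hypotheses. The paper's own proof is a one-line version of this same transfer: from $b_j^m = \sum_i a_i r_i$ it writes $(b_jT^{d_j})^m = \sum_i a_iT^{c_i}\cdot r_iT^{md_j - c_i}$, which silently requires $r_i \in \mathfrak{q}^{md_j - c_i}$, and nothing in the relation $b_j^m = \sum a_i r_i$ provides that. Your chart-by-chart reduction on the blowup hits the same wall on each chart, and you correctly sense that control of initial forms in $G_A(\mathfrak{q})$ would be needed; but neither your outline nor the published argument actually closes the gap.

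In fact the gap cannot be closed with only the hypotheses given: the lemma fails as stated, even when every $c_i,d_j$ is the exact $\mathfrak{q}$-adic order. Take $A=k[[x,y]]$, $\mathfrak{q}=(x^2,y)$, $\au$ the single element $x^2$ with $c_1=1$, and $\underline{b}$ the single element $x^3$ with $d_1=1$; both are the correct $\mathfrak{q}$-orders and $\Rad(x^2A)=(x)=\Rad(x^3A)$. Set $M=A/(y)\cong k[[x]]$, so that $\mathfrak{q}^nM=x^{2n}M$. Then for every $n$ one computes $\check{C}^1(\au,\mathfrak{q},M;n)=\varinjlim_k \mathfrak{q}^{n+k}M/x^{2k}=x^{2n}M$, so the complex $0\to x^{2n}M\to x^{2n}M\to 0$ has $\check{H}^1(\au,\mathfrak{q},M;n)=0$, whereas $\check{C}^1(\underline{b},\mathfrak{q},M;n)=\varinjlim_k \mathfrak{q}^{n+k}M/x^{3k}=\bigcup_k x^{2n-k}M=M_x$, so $\check{H}^1(\underline{b},\mathfrak{q},M;n)=M_x/x^{2n}M\neq 0$. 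The $\check{L}^i$ differ correspondingly: $\check{L}^1(\au,\mathfrak{q},M;n)=M_x/M\neq 0$ while $\check{L}^1(\underline{b},\mathfrak{q},M;n)=0$. So neither of the two asserted families of isomorphisms can hold in this generality; the lemma needs some additional hypothesis (for instance $\mathfrak{q}=\au A$, or an integral-closure rather than a radical equality, or a condition tying the $c_i$ to $\au A$), and the step you flagged is exactly where both your outline and the paper's proof break down.
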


\begin{proof}
	First put $\underline{bT^d} = b_1T^{d_1},\ldots,b_lT^{d_l}$. Then we claim that 
	$\Rad \underline{aT^c}R_A(\mathfrak{q}) = \Rad \underline{bT^d} R_A(\mathfrak{q})$. Let $b \in \Rad \au A$, 
	that is $b \in \mathfrak{q}^{e}$ and $b^m = \sum_{i=1}^k a_i r_i$. That is $(bT^e)^m = 
	\sum_{i=1}^k a_iT^{c_i}r_i T^{me-c_i} \in \underline{aT^c} R_A(\mathfrak{q}).$ The reverse inclusion 
	can be proved similarly. 
	
	By passing to the direct limit of the short exact sequence at the end of the proof of 
	\ref{cech-2} provides a short exact sequence of complexes
	\[
	0 \to \check{C}_{\underline{aT^c}}(R_M(\mathfrak{q}))_n \to \check{C}_{\au}(M) \to 
	\check{\mathcal{L}}^{\bullet}(\au,\mathfrak{q},M;n) \to 0.
	\] 
	The \v{C}ech complexes are isomorphic for ideals equal up to the radical. By construction we have 
	$\check{C}_{\underline{aT^c}}(R_M(\mathfrak{q}))_n \cong \check{C}^{\bullet}(\au,\mathfrak{q},M;n)$, 
	which proves the first isomorphism. Since $ \check{C}_{\au}\otimes_AM \cong \check{C}_{\underline{b}} 
	\otimes_A M$ the previous sequence proves the second isomorphism of the statement. 
\end{proof}

\begin{remark} \label{cech-4}
	By our construction it follows that $\check{C}^{\bullet}(\au,\mathfrak{q},M;n) 
	\cong [\check{C}_{\underline{aT^c}}(R_M(\mathfrak{q}))]_n$ for all $n \in \mathbb{Z}$. That is 
	the cohomology of $\check{C}^{\bullet}(\au,\mathfrak{q},M;n)$ is the $n$-th graded component 
	of the cohomology of $\check{C}_{\underline{aT^c}}(R_M(\mathfrak{q}))$. In the particular case of 
	$\mathfrak{q} = \au A$ and $c_1= \ldots= c_t =1$ there is an interpretation in sheaf cohomology. 
	To this end let $X = \Proj R_A(\au)$ and $\mathcal{F} = (R_M(\au))^{\sim}$ the associated 
	$\mathcal{O}_X$-module. Then for all $n \in \mathbb{Z}$ there is an exact sequence 
	\[
	0 \to H^0_{\au}(R_M(\au))_n \to R_M(\au)_n \to H^0(X,\mathcal{F}(n)) \to H^1_{\au}(R_M(\au))_n \to 0
	\]
	and isomorphisms $H^i(X,\mathcal{F}(n)) \cong H^{i+1}_{\au}(R_M(\au))_n$ for all $i \geq 1$. 
	Then it follows that $R_M(\au)_n \cong H^0(X,\mathcal{F}(n))$ and $H^i(X,\mathcal{F}(n)) = 0$ 
	for all $n \gg 0$ and $i \geq 1$. For these and related results about sheaf cohomology we refer to 
	Hartshorne's book \cite{rH}.  
	
	By view of the short exact sequence of complexes in \ref{cech-1} (B) it turns out that 
	$H^i_{\au}(M) \cong \check{L}^i(\au,\au,M;n)$ for all $ n \gg 0$ and $i \geq 0$.
\end{remark}

\section{Sequences}
Here let $(A,\mathfrak{m},\Bbbk)$ denote a local ring.
% with $\Bbbk = A/\mathfrak{m}$ its residue field. 
%For simplicity we assume that $\Bbbk$ is infinite. This is not an essential restriction since 
%$A \to A[X]_{()\mathfrak{m},X})$ is a faithful flat extension, where $A[X]_{(\mathfrak{m},X)}$ has 
%an infinite residue field. 
Let $\mathfrak{a}$ denote an ideal of a Noetherian ring $A$. For a finitely generated $A$-module $M$ and 
$N \subset M$ let $N :_M \langle \mathfrak{a}\rangle$ denote the stable value of $N :_M \mathfrak{a}^m$ 
for $m \gg 0$. 

\begin{lemma} \label{seq-1}
	With the previous notation let $\mathcal{N} = 0:_{R_M(\mathfrak{q})} \langle \underline{aT^c}\rangle$.  Then 
	\begin{itemize}
		\item[(a)] $\mathcal{N} = \oplus_{n \geq 0} (0:_M\langle \au \rangle) \cap \mathfrak{q}^nM$, 
		\item[(b)] $H^0_{\underline{aT^c}}(R_M(\mathfrak{q})/\mathcal{N}) = 0$,  
		\item[(c)] $H^0_{\underline{aT^c}}(R_M(\mathfrak{q})) \cong \mathcal{N}$ and 
		$H^i_{\underline{aT^c}}(R_M(\mathfrak{q})) \cong H^i_{\underline{aT^c}}(R_{M/0:_M \langle \au \rangle}(\mathfrak{q}))$ for all $i > 0$.
	\end{itemize}
\end{lemma}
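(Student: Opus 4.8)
The plan is to work entirely inside the Rees module $R_M(\mathfrak{q})$ over the Noetherian graded ring $R_A(\mathfrak{q})$ and exploit that $\underline{aT^c}$-local cohomology is computed by the \v{C}ech complex $\check{C}_{\underline{aT^c}}$, which depends only on the radical of the ideal $\underline{aT^c}R_A(\mathfrak{q})$. First I would prove (a). The submodule $\mathcal{N} = 0 :_{R_M(\mathfrak{q})} \langle \underline{aT^c}\rangle$ is graded because it is annihilated by homogeneous elements, so $\mathcal{N} = \oplus_{n\ge 0} \mathcal{N}_n$ with $\mathcal{N}_n \subseteq \mathfrak{q}^n M$. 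An element $x = mT^n \in [\mathfrak{q}^nM\,T^n]$ lies in $\mathcal{N}$ iff $(\underline{aT^c})^k x = 0$ for $k \gg 0$, i.e. iff each monomial $(a_1T^{c_1})^{k_1}\cdots(a_tT^{c_t})^{k_t} x = 0$ for $\sum k_i$ large; passing to the underlying $A$-module this says $a_1^{k_1}\cdots a_t^{k_t} m = 0$ for all such exponent vectors, which (since $\underline a$ generates $\underline aA$) is equivalent to $m \in 0:_M \langle \underline a \rangle$. Hence $\mathcal{N}_n = (0:_M\langle\underline a\rangle)\cap \mathfrak{q}^nM$, which is (a). I should also note here that $\underline aA$ and $\mathfrak{q}$ have the same radical issue is irrelevant — what matters is that $\mathcal{N}$ as an $A$-module in degree $n$ is exactly the submersion into the fixed submodule $0:_M\langle\underline a\rangle$ of $M$.

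Next, (b). Set $\overline R = R_M(\mathfrak{q})/\mathcal{N}$. I claim $0:_{\overline R}\langle\underline{aT^c}\rangle = 0$, which is precisely $H^0_{\underline{aT^c}}(\overline R) = 0$ by the standard identification of the zeroth local cohomology with the stable annihilator. This is the usual ``saturation kills itself'' argument: if $\bar y \in \overline R$ is killed by a power of $\underline{aT^c}$ and $y \in R_M(\mathfrak{q})$ is a lift, then $(\underline{aT^c})^k y \subseteq \mathcal{N}$ for some $k$, hence $(\underline{aT^c})^{k+l}y = 0$ for $l\gg 0$ since $\mathcal{N}$ is annihilated by a power of $\underline{aT^c}$; thus $y \in \mathcal{N}$ and $\bar y = 0$. (Here one uses that $\underline{aT^c}R_A(\mathfrak q)$ is finitely generated, so a single power $l$ works uniformly on the finitely generated module $\mathcal N$.)

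Finally, (c). The isomorphism $H^0_{\underline{aT^c}}(R_M(\mathfrak{q})) \cong \mathcal{N}$ is just the definition of $\Gamma_{\underline{aT^c}}$ as the stable annihilator submodule. For the higher cohomology, apply $\Rgam[\underline{aT^c}]$ to the short exact sequence $0 \to \mathcal{N} \to R_M(\mathfrak{q}) \to \overline R \to 0$ and use the long exact sequence together with $H^i_{\underline{aT^c}}(\mathcal{N}) = 0$ for $i > 0$ (since $\mathcal{N}$ is $\underline{aT^c}$-torsion, so $\Gamma_{\underline{aT^c}}$-acyclic) and (b); this yields $H^i_{\underline{aT^c}}(R_M(\mathfrak{q})) \cong H^i_{\underline{aT^c}}(\overline R)$ for all $i > 0$. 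It remains to identify $\overline R$ with $R_{M/(0:_M\langle\underline a\rangle)}(\mathfrak{q})$. Writing $N = 0:_M\langle\underline a\rangle$, part (a) gives $\mathcal{N} = \oplus_n (N\cap\mathfrak{q}^nM)T^n$, so in degree $n$ we have $\overline R_n = \mathfrak{q}^nM/(N\cap\mathfrak{q}^nM) \cong (\mathfrak{q}^nM + N)/N$, which is the image of $\mathfrak{q}^n$ in $M/N$; the graded $R_A(\mathfrak{q})$-module structure is visibly the Rees module structure on $M/N$. I expect the only point requiring care — the ``main obstacle'' such as it is — to be this last identification: one must check that the image of $\mathfrak q^n M$ in $M/N$ really equals $\mathfrak q^n(M/N)$, i.e. that $(\mathfrak q^n M + N)/N = \mathfrak q^n(M/N)$, which is immediate, and that the comparison of the two graded module structures is an isomorphism of $R_A(\mathfrak q)$-modules and not merely a degreewise $A$-isomorphism; both are routine but should be stated. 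Everything else is formal from the excerpt's setup and the standard properties of local cohomology recalled in Remark~\ref{cech}.
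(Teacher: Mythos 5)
Your proof is correct and follows essentially the same route as the paper: identify $\mathcal{N}$ degree by degree to get (a), observe the short exact sequence $0 \to \mathcal{N} \to R_M(\mathfrak{q}) \to R_{M/0:_M\langle\au\rangle}(\mathfrak{q}) \to 0$, and exploit that $\mathcal{N}$ is $\underline{aT^c}$-torsion to deduce (b) and (c) from the long exact local cohomology sequence. The only cosmetic difference is that you verify (b) by the direct ``saturation kills itself'' computation on $R_M(\mathfrak{q})/\mathcal{N}$, whereas the paper leaves it implicit in the long exact sequence; these are the same argument.
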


\begin{proof}
	The proof of (a) is clear since $\mathcal{N} = \oplus_{n \geq 0} 0 :_{\mathfrak{q}^n M} \langle \au \rangle$. 
	Moreover we have a short exact sequence 
	\[
	0 \to \mathcal{N} \to R_M(\mathfrak{q}) \to R_{M/0:_M\langle \au \rangle}(\mathfrak{q}) \to 0.
	\]
	To this end note that $[ R_{M/0:_M\langle \au \rangle}(\mathfrak{q})]_n = 
	\mathfrak{q}^n(M/0:_M\langle \au \rangle) \cong \mathfrak{q}^nM/(0:_M\langle \au \rangle) \cap \mathfrak{q}^nM$ 
	for all $n \geq 0$. Then we apply the local cohomology functor $H^i_{\underline{aT^c}}(\cdot)$. The statements 
	in (b) and (c) follow now since $\mathcal{N}$ is of $\underline{aT^c}$-torsion.
\end{proof}

In the following we will investigate the behaviour modulo a certain element. 

\begin{lemma} \label{seq-2}
	With the previous notation let $a \in A$ denote an $M$-regular element with $a \in \mathfrak{q}^c \setminus 
	\mathfrak{q}^{c+1}$ such that $a^{\star}= a + \mathfrak{q}^{c+1} \in G_A(\mathfrak{q})$ has the property 
	that $a^{\star} \notin \mathfrak{P}$ for all $\mathfrak{P} \in \Ass G_M(\mathfrak{q})$ with 
	$\mathfrak{P} \not\supset G_A(\mathfrak{q})_+$. Put $\mathcal{M} = \oplus_{n \geq 0} 
	aM \cap \mathfrak{q}^n M /a \mathfrak{q}^{n-c} M$. Then there is a short exact sequence
	\[
	0 \to\mathcal{M} \to H^0_{\underline{aT^c}}(R_M(\mathfrak{q})/(aT^c)R_M(\mathfrak{q})) \to 
	H^0_{\underline{aT^c}}(R_{M/aM}(\mathfrak{q})) \to 0
	\]
	and isomorphisms $ H^i_{\underline{aT^c}}(R_M(\mathfrak{q})/(aT^c)R_M(\mathfrak{q})) \cong 
		H^i_{\underline{aT^c}}(R_{M/aM}(\mathfrak{q}))$ for all $i > 0$.
\end{lemma}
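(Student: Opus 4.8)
The plan is to produce a short exact sequence of graded $R_A(\mathfrak{q})$-modules whose kernel term is exactly $\mathcal{M}$ and whose cokernel term is $R_{M/aM}(\mathfrak{q})$, and then to apply the local cohomology functors $H^\bullet_{\underline{aT^c}}(\cdot)$, exploiting that $\mathcal{M}$ is $\underline{aT^c}$-torsion so that it contributes only in cohomological degree $0$.

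First I would construct the exact sequence
\[
0 \longrightarrow \mathcal{M} \longrightarrow R_M(\mathfrak{q})/(aT^c)R_M(\mathfrak{q}) \stackrel{\pi}{\longrightarrow} R_{M/aM}(\mathfrak{q}) \longrightarrow 0
\]
of graded $R_A(\mathfrak{q})$-modules. The surjection $M \to M/aM$ induces a surjection of Rees modules $R_M(\mathfrak{q}) \to R_{M/aM}(\mathfrak{q})$ which in degree $n$ is the natural map $\mathfrak{q}^n M \to \mathfrak{q}^n(M/aM) \cong \mathfrak{q}^n M/(aM \cap \mathfrak{q}^n M)$, and so has degree-$n$ kernel $aM \cap \mathfrak{q}^n M$. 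Since $a \in \mathfrak{q}^c$, the degree-$n$ component of the submodule $(aT^c)R_M(\mathfrak{q})$ is $a\,\mathfrak{q}^{n-c}M \subseteq aM \cap \mathfrak{q}^n M$; hence the above surjection factors through $R_M(\mathfrak{q})/(aT^c)R_M(\mathfrak{q})$, giving the map $\pi$, whose degree-$n$ kernel is now $(aM \cap \mathfrak{q}^n M)/a\,\mathfrak{q}^{n-c}M$. Taking the direct sum over $n$, this kernel is precisely $\mathcal{M}$, which yields the sequence. The only point needing a little care is the bookkeeping with the degree shift by $c$; neither the regularity of $a$ nor the hypothesis on $a^{\star}$ enters here.

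Next I would observe that $\mathcal{M}$ is an $\underline{aT^c}$-torsion $R_A(\mathfrak{q})$-module. By construction $\mathcal{M}$ is a submodule of $R_M(\mathfrak{q})/(aT^c)R_M(\mathfrak{q})$, on which multiplication by $aT^c$ is zero; hence $aT^c$ annihilates $\mathcal{M}$, and as $aT^c$ lies in the ideal $\underline{aT^c}$ it follows that $\mathcal{M}$ is $\underline{aT^c}$-torsion. Since $R_A(\mathfrak{q})$ is Noetherian, torsion modules are acyclic for $\Gamma_{\underline{aT^c}}$, so $H^0_{\underline{aT^c}}(\mathcal{M}) = \mathcal{M}$ and $H^i_{\underline{aT^c}}(\mathcal{M}) = 0$ for all $i > 0$. (This is the step in which the hypothesis on $a^{\star}$ becomes relevant in the intended applications: the classical Valabrega--Valla characterization of superficial elements gives $aM \cap \mathfrak{q}^n M = a\,\mathfrak{q}^{n-c}M$ for all $n \gg 0$, so that $\mathcal{M}_n = 0$ for $n \gg 0$ and $\mathcal{M}$ is in fact of finite length; the bare torsion property, which is all the present statement requires, holds unconditionally.)

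Finally I would apply $H^\bullet_{\underline{aT^c}}(\cdot)$ to the short exact sequence of the first step and read off the long exact cohomology sequence. Feeding in $H^0_{\underline{aT^c}}(\mathcal{M}) = \mathcal{M}$ and $H^i_{\underline{aT^c}}(\mathcal{M}) = 0$ for $i \geq 1$, the initial segment of the long exact sequence reads
\[
0 \longrightarrow \mathcal{M} \longrightarrow H^0_{\underline{aT^c}}\bigl(R_M(\mathfrak{q})/(aT^c)R_M(\mathfrak{q})\bigr) \longrightarrow H^0_{\underline{aT^c}}\bigl(R_{M/aM}(\mathfrak{q})\bigr) \longrightarrow 0,
\]
which is the asserted short exact sequence, and for every $i \geq 1$ the vanishing of $H^i_{\underline{aT^c}}(\mathcal{M})$ and $H^{i+1}_{\underline{aT^c}}(\mathcal{M})$ forces the isomorphism $H^i_{\underline{aT^c}}(R_M(\mathfrak{q})/(aT^c)R_M(\mathfrak{q})) \cong H^i_{\underline{aT^c}}(R_{M/aM}(\mathfrak{q}))$. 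The substantive content is entirely in the degreewise identification of the kernel $\mathcal{M}$ in the first step; the rest is the formal long exact sequence together with the acyclicity of torsion modules, so I do not expect a real obstacle beyond getting the degree-shift bookkeeping right.
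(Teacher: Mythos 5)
Your construction of the short exact sequence
\[
0 \longrightarrow \mathcal{M} \longrightarrow R_M(\mathfrak{q})/(aT^c)R_M(\mathfrak{q}) \longrightarrow R_{M/aM}(\mathfrak{q}) \longrightarrow 0
\]
and the plan to pass to the long exact cohomology sequence match the paper exactly; the degree-$n$ bookkeeping you give is correct. The problem is the step in which you declare that $\mathcal{M}$ is ``unconditionally'' $\underline{aT^c}$-torsion because it is annihilated by $aT^c$. Being killed by a single element of an ideal does not make a module torsion with respect to that ideal: you would need every element of $\mathcal{M}$ to be annihilated by a power of the \emph{whole} ideal $(\underline{aT^c})R_A(\mathfrak{q}) = (a_1T^{c_1},\ldots,a_tT^{c_t})R_A(\mathfrak{q})$, and knowing only $aT^c\cdot\mathcal{M}=0$ gives $\Supp\mathcal{M}\subseteq V(aT^c)$, which is strictly weaker than $\Supp\mathcal{M}\subseteq V(\underline{aT^c})$ (think of $k[x,y]/(x)$, which is killed by $x$ but is not $(x,y)$-torsion). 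So the parenthetical in your write-up is exactly backwards: the hypothesis on $a^\star$ is not a dispensable refinement but is needed to make $\mathcal{M}$ torsion at all.

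The paper's route is: the condition on $a^\star$ forces $0:_{G_M(\mathfrak{q})}a^\star$ to be supported in $V(G_A(\mathfrak{q})_+)$, hence to vanish in large degrees; combined with $M$-regularity of $a$ and Artin--Rees this yields $aM\cap\mathfrak{q}^nM = a\,\mathfrak{q}^{n-c}M$ for $n\gg 0$, so $\mathcal{M}_n=0$ for $n\gg 0$ and $\mathcal{M}$ is a finitely generated $A/\mathfrak{q}$-module. \emph{That} is what makes $\mathcal{M}$ $\underline{aT^c}$-torsion: a graded module concentrated in a bounded range of degrees is annihilated by a high power of any ideal generated by positive-degree elements, and the $a_iT^{c_i}$ all have positive degree. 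Once you repair this step by supplying the bounded-degree argument, the rest of your proof (the long exact sequence and the isomorphisms in positive degrees) goes through as written.
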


\begin{proof}
	By the choice of $a^{\star}$ it follows that 
	$$
	0 :_{G_M(\mathfrak{q})} a^{\star} = \oplus_{n \geq 0} 
	(\mathfrak{q}^{n+c+1}M :_M a)\cap \mathfrak{q}^nM/ \mathfrak{q}^{n+1}M
	$$
	has support contained in $V(G_A(\mathfrak{q})_+)$. It is a finitely generated $A/\mathfrak{q}$-module 
	and $[0 :_{G_M(\mathfrak{q})} a^{\star}]_n = 0$ for $n\gg 0$. Because $a$ is $M$-regular the 
	Artin-Rees Lemma implies that there is an integer $\ell$ such that $\mathfrak{q}^{n+c}M :_M a = 
	\mathfrak{q}^n M$ for all $ n \geq \ell$. As a consequence we get 
	\[
	a M \cap \mathfrak{q}^{n} M = a \mathfrak{q}^{n-c} M \text{ for all } n \gg 0. 
	\]
	By inspecting the corresponding graded pieces there is a short exact sequence of graded 
	$R_A(\mathfrak{q})$-modules 
	\[
	0 \to \mathcal{M} \to R_M(\mathfrak{q})/(aT^c)R_M(\mathfrak{q}) \to R_{M/aM}(\mathfrak{q}) \to 0.
	\]
	Since $\mathcal{M}$ is a finitely generated $A/\mathfrak{q}$-module, it is of $\underline{aT^c}$-torsion 
	and the long exact cohomology sequence provides the short exact sequence as well as the 
	isomorphisms of the statement.
\end{proof}

Next we want to prove that our cohomologies allow to mod out the annihilator of a finitely generated 
module. To be more precise.

\begin{lemma} \label{seq-3}
	Let $\au, \mathfrak{q}, M$ as before with $I = \Ann_A M$and $\tilde{A} = A/I$. Then there are isomorphisms
	\begin{gather*}
	\check{H}^i(\au,\mathfrak{q},M;n) \cong \check{H}^i(\tilde{\au},\mathfrak{q}\tilde{A},M;n) \text{ and }\\
	\check{L}^i(\au,\mathfrak{q},M;n) \cong \check{L}^i(\tilde{\au},\mathfrak{q}\tilde{A},M;n)
	\end{gather*}
	for all $i,n \in \mathbb{N}$, where $\tilde{\au}$ denotes the sequence formed by $\au$ in $\tilde{A}$.
\end{lemma}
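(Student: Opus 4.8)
The plan is to observe that the whole construction of Notation~\ref{cech-1} for the data $(\au,\mathfrak{q},M,n)$ uses $A$ only through its action on $M$, so that replacing $A$ by $\tilde A = A/I$ with $I = \Ann_A M$ changes nothing.

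First I would record the elementary compatibilities. Since $IM = 0$, the module $M$ is canonically an $\tilde A$-module, and for every $m \in \mathbb{N}$ the image of $\mathfrak{q}^m$ in $\tilde A$ is $(\mathfrak{q}\tilde A)^m$; hence $\mathfrak{q}^m M = (\mathfrak{q}\tilde A)^m M$. In particular $a_i \in \mathfrak{q}^{c_i}$ gives $\tilde a_i \in (\mathfrak{q}\tilde A)^{c_i}$, so that the complexes $\check{C}^{\bullet}(\tilde\au,\mathfrak{q}\tilde A,M;n)$ and $\check{\mathcal{L}}^{\bullet}(\tilde\au,\mathfrak{q}\tilde A,M;n)$ are defined. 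Moreover, for any $a \in A$ with image $\tilde a \in \tilde A$ one has $M_a = M_{\tilde a}$ (localizing the $\tilde A$-module $M$ at $a$ or at $\tilde a$ yields the same object), and consequently $\check{C}_{\au} \otimes_A M = \check{C}_{\tilde\au} \otimes_{\tilde A} M$ with identical differentials.

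Next I would compare term by term. Fix indices $1 \leq j_1 < \cdots < j_i \leq t$ and set $S = a_{j_1}\cdots a_{j_i}$, $e = c_{j_1} + \cdots + c_{j_i}$. By Lemma~\ref{dirlim} the $(j_1,\ldots,j_i)$-summand $\mathfrak{q}^n M[\mathfrak{q}^e/S] \subseteq M_S$ of $\check{C}^i(\au,\mathfrak{q},M;n)$ is the direct limit of the system $\{\mathfrak{q}^{n+ke}M,\ S\}_k$, while the analogous limit of $\{(\mathfrak{q}\tilde A)^{n+ke}M,\ \tilde S\}_k$ inside $M_{\tilde S}$ is the corresponding summand of $\check{C}^i(\tilde\au,\mathfrak{q}\tilde A,M;n)$. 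Since $\mathfrak{q}^{n+ke}M = (\mathfrak{q}\tilde A)^{n+ke}M$ and multiplication by $S$ agrees with multiplication by $\tilde S$ on $M$, these two submodules of $M_S = M_{\tilde S}$ coincide. Hence $\check{C}^{\bullet}(\au,\mathfrak{q},M;n)$ and $\check{C}^{\bullet}(\tilde\au,\mathfrak{q}\tilde A,M;n)$ are one and the same subcomplex of $\check{C}_{\au}\otimes_A M$, so $\check{H}^i(\au,\mathfrak{q},M;n) \cong \check{H}^i(\tilde\au,\mathfrak{q}\tilde A,M;n)$ for all $i,n$. Taking cokernels of this common inclusion in the common ambient \v{C}ech complex identifies $\check{\mathcal{L}}^{\bullet}(\au,\mathfrak{q},M;n)$ with $\check{\mathcal{L}}^{\bullet}(\tilde\au,\mathfrak{q}\tilde A,M;n)$, and hence $\check{L}^i(\au,\mathfrak{q},M;n) \cong \check{L}^i(\tilde\au,\mathfrak{q}\tilde A,M;n)$.

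I do not expect a genuine obstacle here: the argument is bookkeeping, and the only point needing a little care is the identity $\mathfrak{q}^m M = (\mathfrak{q}\tilde A)^m M$ together with the insensitivity of localization and of the operation $-[\mathfrak{q}^e/S]$ to the passage from $A$ to $\tilde A$. Should a more structural phrasing be preferred, one may route everything through Remark~\ref{cech-4}: from $IM = 0$ one checks $\Ann_{R_A(\mathfrak{q})} R_M(\mathfrak{q}) = \bigoplus_{m \geq 0}(I \cap \mathfrak{q}^m)T^m$ and obtains a graded isomorphism $R_A(\mathfrak{q})/\bigoplus_{m}(I \cap \mathfrak{q}^m)T^m \cong R_{\tilde A}(\mathfrak{q}\tilde A)$, so that $\check{C}_{\underline{aT^c}}(R_M(\mathfrak{q}))$ over $R_A(\mathfrak{q})$ coincides with the corresponding \v{C}ech complex over $R_{\tilde A}(\mathfrak{q}\tilde A)$; passing to $n$-th graded components and to the short exact sequence in \ref{cech-1}~(B) then delivers both isomorphisms simultaneously.
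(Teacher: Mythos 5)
Your proposal is correct, and your primary argument takes a slightly different and somewhat more elementary route than the paper's. The paper first passes to the graded Rees-module picture of Remark~\ref{cech-4}, observing that $H^i_{\underline{aT^c}}(R_M(\mathfrak q)) \cong H^i_{\underline{\tilde a T^c}}(R_M(\mathfrak q\tilde A))$ by the usual base-change independence of local cohomology, and then reads off both families of isomorphisms by taking graded components and invoking the short exact sequence from the proof of Lemma~\ref{cech-3}. You instead identify the two complexes $\check C^{\bullet}(\au,\mathfrak q,M;n)$ and $\check C^{\bullet}(\tilde\au,\mathfrak q\tilde A,M;n)$ term by term as the \emph{same} subcomplex of the common ambient complex $\check C_{\au}\otimes_A M = \check C_{\tilde\au}\otimes_{\tilde A} M$, using $\mathfrak q^m M = (\mathfrak q\tilde A)^m M$ and $M_a = M_{\tilde a}$. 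This is more elementary (no appeal to base change of local cohomology) and in fact proves the stronger statement that the complexes themselves, not merely their cohomologies, coincide; the cokernel comparison then gives the $\check{\mathcal L}^{\bullet}$ identification with no further work. Your closing remark that one could instead route everything through the Rees-module picture is essentially a sketch of the paper's own argument. Both approaches are sound; yours buys a cleaner on-the-nose identification at the level of complexes, while the paper's fits more uniformly into the framework it has built around $\check C_{\underline{aT^c}}(R_M(\mathfrak q))$.
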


\begin{proof}
	First of all note that $a_i + I \in \mathfrak{q}^{c_i} +I$ for $i = 1,\ldots,t$ as elements in $\tilde{A}$.
	By base change of local cohomology we have isomorphisms
	\[
	H^i_{\underline{aT^c}}(R_M(\mathfrak{q})) \cong H^i_{\underline{\tilde{a}T^c}}(R_M(\mathfrak{q}\tilde{A})) \text{ and } 
	H^i_{\au A}(M) \cong H^i_{\tilde{\au}\tilde{A}}(M)
	\] 
	for all $i \in \mathbb{N}$. Because of 
	$$
	H^i_{\underline{aT^c}}(R_M(\mathfrak{q}))_n \cong \check{H}^i(\au,\mathfrak{q},M;n) \text{ and } 
    H^i_{\underline{\tilde{a}T^c}}(R_M(\mathfrak{q}\tilde{A}))_n \cong H^i(\tilde{\au},\mathfrak{q}\tilde{A},M;n)
    $$ 
    this proves the first family of isomorphisms. The second one follows by view of the exact sequence at the end 
    of the proof of \ref{cech-3}.
\end{proof}

We prove now the first structural result on a certain cohomology. 

\begin{theorem} \label{seq-4}
	With the previous notation suppose that $(A,\mathfrak{m})$ is a local ring and $\au A$ is an 
	$\mathfrak{m}$-primary ideal. Then 
	\[
	\check{H}^i(\au,\mathfrak{q},M;n) \; \text{ and } \; \check{L}^i(\au,\mathfrak{q},M;n)
	\]
	are Artinian $A$-modules for all $i,n \in \mathbb{N}$.
\end{theorem}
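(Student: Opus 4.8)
The plan is to first reduce the statement for $\check{L}^i$ to the one for $\check{H}^i$, and then to prove Artinianness of $\check{H}^i(\au,\mathfrak{q},M;n)$ by an induction on $\dim M$ built from the exact sequences of this section. For the reduction, the short exact sequence of complexes in \ref{cech-1}(B) gives the long exact sequence $\cdots\to\check{H}^i(\au,\mathfrak{q},M;n)\to H^i_{\au A}(M)\to\check{L}^i(\au,\mathfrak{q},M;n)\to\check{H}^{i+1}(\au,\mathfrak{q},M;n)\to\cdots$. Since $\au A$ is $\mathfrak{m}$-primary we have $H^i_{\au A}(M)=H^i_{\mathfrak{m}}(M)$, which is Artinian by Grothendieck's theorem; so if every $\check{H}^i(\au,\mathfrak{q},M;n)$ is Artinian, then each $\check{L}^i(\au,\mathfrak{q},M;n)$ fits in a short exact sequence with a quotient of $H^i_{\mathfrak{m}}(M)$ and a submodule of $\check{H}^{i+1}(\au,\mathfrak{q},M;n)$, hence is Artinian. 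Thus only $\check{H}^i$ needs treatment.

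For $\check{H}^0$ I would argue directly: by \ref{seq-1}(a),(c) it equals $(0:_M\langle\au\rangle)\cap\mathfrak{q}^nM$, and since $\au A$ is $\mathfrak{m}$-primary the module $0:_M\langle\au\rangle$ is finitely generated with support $\{\mathfrak{m}\}$, hence of finite length; so $\check{H}^0$ has finite length, in particular is Artinian. For $i\ge1$, \ref{seq-1}(c) lets one replace $M$ by $\bar M=M/(0:_M\langle\au\rangle)$, and \ref{seq-1}(b) forces $0:_{\bar M}\mathfrak{m}\subseteq 0:_{\bar M}\langle\au\rangle=0$, so either $\bar M=0$ (nothing to prove) or $\depth_{\mathfrak{m}}\bar M\ge1$; in particular the case $\dim M=0$ is disposed of and provides the base of an induction on $d=\dim M$. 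For the inductive step ($d\ge1$, $\depth_{\mathfrak{m}}M\ge1$), after the harmless passage to a ring with infinite residue field I would pick an $M$-regular element $a\in\mathfrak{q}\setminus\mathfrak{q}^2$ satisfying the condition $(\star)$ on $a^\star$ from \ref{seq-2}, and use it to mod out. Since $a$ is $M$-regular, $aT$ is a nonzerodivisor on $R_M(\mathfrak{q})$; applying $H^\bullet_{\underline{aT^c}R_A(\mathfrak{q})}(-)$ to $0\to R_M(\mathfrak{q})(-1)\xrightarrow{aT}R_M(\mathfrak{q})\to R_M(\mathfrak{q})/aT R_M(\mathfrak{q})\to0$ and passing to degree $n$ yields $\cdots\to\check{H}^i(\au,\mathfrak{q},M;n-1)\xrightarrow{aT}\check{H}^i(\au,\mathfrak{q},M;n)\to[H^i_{\underline{aT^c}R_A(\mathfrak{q})}(R_M(\mathfrak{q})/aT R_M(\mathfrak{q}))]_n\to\cdots$. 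By \ref{seq-2} — using that $aT$ annihilates $R_{M/aM}(\mathfrak{q})$, so that modulo $aT$ the ideal generated by $\underline{aT^c}$ may be replaced by the one generated by the remaining members — the middle term is $\check{H}^i$ of the lower-dimensional module $M/aM$ for $i\ge1$, and for $i=0$ it is an extension of that by the finite-length module $\mathcal{M}_n$; by the induction hypothesis applied to $M/aM$ (after reducing its annihilator to zero via \ref{seq-3}, which turns the remaining members into an $\mathfrak{m}$-primary system there) these middle terms are Artinian. Since $aT$ lies in $\underline{aT^c}R_A(\mathfrak{q})$, multiplication by $aT$ is locally nilpotent on $H^\bullet_{\underline{aT^c}R_A(\mathfrak{q})}(R_M(\mathfrak{q}))$, and by \ref{cech-2} together with \ref{kos-5}(b),(c) the module $\check{H}^i(\au,\mathfrak{q},M;n)$ is $\mathfrak{m}$-torsion (a filtered colimit of finite-length Koszul cohomology modules); one then extracts Artinianness of $\check{H}^i(\au,\mathfrak{q},M;n)$ from the long exact sequence.

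The main obstacle is exactly that last extraction: a priori the long exact sequence only shows that $\operatorname{coker}(aT)$ and $\ker(aT)$ on the modules $\check{H}^i(\au,\mathfrak{q},M;n)$ are Artinian, and one must
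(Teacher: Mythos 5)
Your proposal tracks the paper's proof quite closely up to the point where it breaks off, and the obstacle you flag at the end is exactly the right one to worry about: from the long exact sequence you only learn that the kernel and cokernel of $aT^c$ acting on the family $\{\check{H}^i(\au,\mathfrak{q},M;n)\}_n$ are Artinian, and this does not by itself give Artinianness of the individual pieces. The paper closes this gap with Melkersson's characterization of Artinian modules: an $A$-module $T$ is Artinian if and only if $\Supp_A T \subseteq \{\mathfrak{m}\}$ and $\Hom_A(\Bbbk, T)$ is a finite-dimensional $\Bbbk$-vector space. The two conditions are verified as follows. For the support condition, localize the degree-$n$ complex $\check{C}_{\underline{aT^c}}(R_M(\mathfrak{q}))_n$ at any prime $\mathfrak{p}\neq\mathfrak{m}$: since $\au A$ is $\mathfrak{m}$-primary, some $a_j$ becomes a unit and the complex becomes exact, so $\Supp [H^i]_n\subseteq\{\mathfrak{m}\}$ (this is what your remark about $\mathfrak{m}$-torsion is reaching for, but it needs to be made at the level of the localized complex, not by appealing to a colimit of Koszul modules). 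For the socle condition, the long exact sequence from $0 \to R_M(\mathfrak{q})(-c) \xrightarrow{aT^c} R_M(\mathfrak{q}) \to R_M(\mathfrak{q})/(aT^c)R_M(\mathfrak{q}) \to 0$ gives in degree $n$ a surjection
\[
H^{i-1}_{\underline{aT^c}}\bigl(R_M(\mathfrak{q})/(aT^c)R_M(\mathfrak{q})\bigr)_n \twoheadrightarrow \Hom_A\bigl(A/aA, [H^{i}_{\underline{aT^c}}(R_M(\mathfrak{q}))]_{n-c}\bigr),
\]
whose source is Artinian by the inductive hypothesis (via \ref{seq-2}), so the target is Artinian; by adjointness its socle is the socle of $[H^i]_{n-c}$, hence finite-dimensional, and Melkersson's criterion then yields Artinianness of $[H^i]_{n-c}$. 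This is the idea you were missing.

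Two smaller remarks. First, the paper's induction is on the cohomological degree $i$ (with the statement ranging over all finitely generated $M$), not on $\dim M$; your induction on $\dim M$ would also work here, but the induction on $i$ is what makes the surjection above immediately usable, since its source is an $H^{i-1}$ of a (possibly different) Rees module. Second, your reduction of the statement for $\check{L}^i$ to that for $\check{H}^i$ via the long exact cohomology sequence and the Artinianness of $H^i_{\mathfrak{m}}(M)$ is the same as the paper's, and your treatment of the $i=0$ case and the passage to a regular element $a$ satisfying condition $(\star)$ all match the paper.
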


\begin{proof}
	We start with the proof for $\check{H}^i(\au,\mathfrak{q},M;n) \cong  H^{i}_{\underline{aT^c}}(R_M(\mathfrak{q}))_n$. 
	We show that $H^{i}_{\underline{aT^c}}(R_M(\mathfrak{q}))_n$ is  Artinian for all finitely 
	generated $A$-modules $M$ and all $n \in \mathbb{N}$ by an induction on $i$. 
	
	For $i= 0$ we have that $H^{0}_{\underline{aT^c}}(R_M(\mathfrak{q})) \cong \mathcal{N}$ with $\mathcal{N}_n 
	\cong (0 :_M \langle \au \rangle) \cap \mathfrak{q}^n M$. As an $A$-module of finite length $\mathcal{N}_n$ 
	is an Artinian $A$-module for all $n \in \mathbb{N}$. 
	
	So let $i > 0$. By view of \ref{seq-1} we may assume that $\mathcal{N} = 0$. 
	Therefore we may choose an $M$-regular element $a$ satisfying the requirements of the element $a$ in 
	\ref{seq-2}. This can be done by prime avoidance since $\Ass G_M(\mathfrak{q}) \not\subseteq V(G_A(\mathfrak{q})_+)$ 
	because otherwise $\dim G_M(\mathfrak{q}) = \dim M =0$. By induction hypothesis (see \ref{seq-2}) 
	it follows that $H^{i-1}_{\underline{aT^c}}(R_M(\mathfrak{q})/(aT^c)R_M(\mathfrak{q}))_n$ is Artinian for 
	all $n \in \mathbb{N}$. In order to prove that $H^i_{\underline{aT^c}}(R_M(\mathfrak{q}))_n$ is an 
	Artinian $A$-module we have to show 
	\begin{enumerate}
		\item $\Supp_A H^i_{\underline{aT^c}}(R_M(\mathfrak{q}))_n \subseteq \{\mathfrak{m}\}$ and 
		\item $\dim \Hom_A(\Bbbk,H^i_{\underline{aT^c}}(R_M(\mathfrak{q}))_n) < \infty$.
	\end{enumerate}
	The first result is easily seen since $\check{C}_{\underline{aT^c}}(R_M(\mathfrak{q}))_n \otimes_A A_{\mathfrak{p}}$ 
	is exact for any non-maximal prime ideal $\mathfrak{p}$. For the proof of (2) we use the 
	short exact sequence 
	$$
	0 \to R_M(\mathfrak{q})(-c) \stackrel{aT^c}{\to} R_M(\mathfrak{q}) \to R_M(\mathfrak{q})/(aT^c)R_M(\mathfrak{q}) \to 0.
	$$
	By applying local cohomology and restriction to degree $n$ it implies a surjection
	\[
	H^{i-1}_{\underline{aT^c}}( R_M(\mathfrak{q})/(aT^c)R_M(\mathfrak{q}))_n \to 
	\Hom_A(A/aA, [H^{i}_{\underline{aT^c}}(R_M(\mathfrak{q}))]_{n-c}) \to 0.
	\]
	That is, $\Hom_A(A/aA, [H^{i}_{\underline{aT^c}}(R_M(\mathfrak{q}))]_{n-c})$ is an Artinian $A$-module and 
	by adjointness 
	\[
	\dim \Hom_A(\Bbbk, [H^{i}_{\underline{aT^c}}(R_M(\mathfrak{q}))]_{n-c}) 
	= \dim \Hom_A(\Bbbk,\Hom_A(A/aA, [H^{i}_{\underline{aT^c}}(R_M(\mathfrak{q}))]_{n-c} )) 
	< \infty.
	\]
	Therefore $ [H^{i}_{\underline{aT^c}}(R_M(\mathfrak{q}))]_n$ is an Artinian $A$-module for all $n \in \mathbb{N}$. 
	This completes the inductive step.

	For the Artinianess of $\check{L}^i(\au,\mathfrak{q},M;n)$ we first note that the local cohomology modules 
	$H^i_{\au A}(M)$ are Artinian $A$-modules. Then the claim follows by virtue of the long exact sequence 
	\[
	\ldots \to  \check{H}^i(\au,\mathfrak{q},M;n) \to H^i_{\au A}(M) \to \check{L}^i(\au,\mathfrak{q},M;n) \to \ldots
	\]
	as follows from the short exact sequence of the corresponding \v{C}ech 
	complexes.
\end{proof}

\section{Vanishing results}
Let $\au = a_1,\ldots,a_t$ denote a system of elements of $A$. Let $\mathfrak{q} \subset A$ be an ideal and 
let $M$ be a finitely generated $A$-module. Let $a_i \in \mathfrak{q}^{c_i}, i = 1,\ldots,t$. 
%Moreover, we put $ c = c_1 \cdots c_t$ and $e_i = c/c_i, i = 1,\ldots,t$. 
Then we have the following technical lemma. 

\begin{lemma} \label{van-0}
	With the previous notation let $a \in \mathfrak{q}^c$. Then the following conditions 
	are equivalent:
	\begin{itemize}
		\item[(i)] $a^{\star} \notin \mathfrak{P} \text{ for all } 
		\mathfrak{P} \in \Ass G_{M}(\mathfrak{q}) \setminus V(G_A(\mathfrak{q})_+)$.
		\item[(ii)] There are integers $l,k$ such that $(\mathfrak{q}^{n+c}M :_Ma) \cap \mathfrak{q}^lM =
		\mathfrak{q}^n M$ for all $n \geq k$.
	\end{itemize}
\end{lemma}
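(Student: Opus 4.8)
The plan is to prove the equivalence of (i) and (ii) by passing to the associated graded module $G_M(\mathfrak{q})$ and interpreting both conditions there. The bridge is the observation that
\[
0:_{G_M(\mathfrak{q})} a^{\star} = \bigoplus_{n \geq 0} (\mathfrak{q}^{n+c+1}M :_M a) \cap \mathfrak{q}^n M / \mathfrak{q}^{n+1} M,
\]
so that condition (i), that $a^{\star}$ avoids every associated prime of $G_M(\mathfrak{q})$ not containing $G_A(\mathfrak{q})_+$, is equivalent to saying that $0 :_{G_M(\mathfrak{q})} a^{\star}$ is supported in $V(G_A(\mathfrak{q})_+)$, hence (being finitely generated over the Noetherian ring $G_A(\mathfrak{q})$) is annihilated by a power of $G_A(\mathfrak{q})_+$ and therefore vanishes in all large degrees. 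That last property — $[0:_{G_M(\mathfrak{q})} a^{\star}]_n = 0$ for $n \gg 0$ — unwinds to: there is an integer $k$ with $(\mathfrak{q}^{n+c+1}M :_M a) \cap \mathfrak{q}^n M = \mathfrak{q}^{n+1}M$ for all $n \geq k$.

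For the implication (i) $\Rightarrow$ (ii) I would then argue by a descending induction / stabilization on the exponent of $\mathfrak{q}$ in the intersection. Starting from $(\mathfrak{q}^{n+c+1}M :_M a)\cap \mathfrak{q}^n M = \mathfrak{q}^{n+1}M$ for $n\ge k$, one wants to replace the ``$n+c+1$'' by an arbitrarily large fixed exponent and the ``$\mathfrak{q}^nM$'' by a fixed $\mathfrak{q}^lM$. The key auxiliary input is the Artin–Rees lemma applied to the submodule $aM \subseteq M$ (or directly to the colon filtration): there is an integer $\ell_0$ such that $aM \cap \mathfrak{q}^n M \subseteq a\mathfrak{q}^{n-c}M$ for all $n \ge \ell_0$, equivalently $\mathfrak{q}^n M :_M a \subseteq \mathfrak{q}^{n-c}M + (0:_M a)$ for $n \ge \ell_0$. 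Combining this with the graded vanishing statement above, one obtains for $n$ large that $(\mathfrak{q}^{n+c}M :_M a) \cap \mathfrak{q}^l M$ collapses to $\mathfrak{q}^n M$ once $l$ is chosen below the stable range; choosing $l$ and $k$ carefully — $l$ large enough for Artin–Rees to bite and $k \ge l$ — gives exactly (ii). The cleanest bookkeeping is to fix $l := \ell_0$ and then increase $k$ until the degreewise vanishing of $0:_{G_M(\mathfrak{q})}a^{\star}$ guarantees $(\mathfrak{q}^{n+c}M:_Ma)\cap \mathfrak{q}^lM \subseteq \mathfrak{q}^nM$ for all $n\ge k$; the reverse inclusion $\mathfrak{q}^nM \subseteq (\mathfrak{q}^{n+c}M:_Ma)\cap \mathfrak{q}^lM$ is automatic for $n\ge l$ since $a\mathfrak{q}^nM\subseteq \mathfrak{q}^{n+c}M$.

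For the converse (ii) $\Rightarrow$ (i) I would run the implication backwards: from $(\mathfrak{q}^{n+c}M :_M a) \cap \mathfrak{q}^l M = \mathfrak{q}^n M$ for all $n \ge k$ one reads off directly that $[0 :_{G_M(\mathfrak{q})} a^{\star}]_n = 0$ for $n \gg 0$ (intersect the defining filtration against $\mathfrak{q}^{n+1}M$ and use that the ambient $\mathfrak{q}^lM$-condition is vacuous once $n \ge l$), hence $0 :_{G_M(\mathfrak{q})} a^{\star}$, being finitely generated over $G_A(\mathfrak{q})$ and zero in high degrees, has support in $V(G_A(\mathfrak{q})_+)$; this forces $a^{\star}$ to lie outside every $\mathfrak{P} \in \Ass G_M(\mathfrak{q})$ with $\mathfrak{P} \not\supseteq G_A(\mathfrak{q})_+$, which is (i).

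The main obstacle I anticipate is the careful translation between the ``three-index'' colon conditions — varying the power of $\mathfrak{q}$ inside the colon, the power cutting down the intersection, and the resulting power — and the single clean statement about $[0:_{G_M(\mathfrak{q})}a^{\star}]_n$. In particular one has to be attentive that (ii) allows an \emph{arbitrary} large fixed $l$ rather than $l$ tied to $n+1$, so the Artin–Rees step is genuinely needed to decouple these, and the quantifier order ($\exists l,k\,\forall n\ge k$) must be respected. Everything else — finite generation of $G_M(\mathfrak{q})$, the support-versus-associated-primes dictionary, and the elementary inclusion $a\mathfrak{q}^{n-c}M\subseteq \mathfrak{q}^nM$ — is routine.
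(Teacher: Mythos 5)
Your overall strategy matches the paper's: both pass to the associated graded module and use the bridge $0:_{G_M(\mathfrak{q})} a^{\star} = \oplus_{n \geq 0} \bigl((\mathfrak{q}^{n+c+1}M :_M a) \cap \mathfrak{q}^n M\bigr)/\mathfrak{q}^{n+1}M$, together with the observation that condition (i) is equivalent to this module vanishing in all large degrees. The paper stops there and says the equivalence with (ii) ``follows easily''; you try to fill in that last translation, and this is where your argument has a gap.

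The Artin--Rees claim $aM \cap \mathfrak{q}^n M \subseteq a\mathfrak{q}^{n-c}M$ for $n \gg 0$ is not correct as a free-standing consequence of the Artin--Rees lemma: that lemma only yields a constant $h$ (depending on $a$, $M$, $\mathfrak{q}$) with $aM \cap \mathfrak{q}^n M \subseteq a\mathfrak{q}^{n-h}M$ for $n \geq h$, and there is no reason that $h$ can be taken to be the initial degree $c$ of $a$. In fact, getting $h = c$ is essentially the content of hypothesis (i) combined with $a$ being $M$-regular, as in the proof of Lemma \ref{seq-2} --- but $a$ is not assumed $M$-regular here, so even invoking (i) does not rescue the inclusion you wrote. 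Happily this extra input is unnecessary; the decoupling you worry about is achieved by direct iteration of the graded vanishing. Writing the vanishing as $(\mathfrak{q}^{m+c+1}M :_M a) \cap \mathfrak{q}^m M = \mathfrak{q}^{m+1}M$ for all $m \geq k$, take $l := k$ in (ii) and let $x \in (\mathfrak{q}^{n+c}M :_M a) \cap \mathfrak{q}^k M$ with $n \geq k$. If $x \in \mathfrak{q}^j M$ for some $k \leq j < n$, then $ax \in \mathfrak{q}^{n+c}M \subseteq \mathfrak{q}^{j+1+c}M$, so $x \in (\mathfrak{q}^{j+1+c}M :_M a) \cap \mathfrak{q}^j M = \mathfrak{q}^{j+1}M$; iterating from $j = k$ up to $j = n-1$ gives $x \in \mathfrak{q}^n M$, and the reverse inclusion is trivial since $a\mathfrak{q}^n M \subseteq \mathfrak{q}^{n+c}M$. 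Your converse (ii) $\Rightarrow$ (i) is fine. So the route is the intended one, but the Artin--Rees step should be deleted and replaced by this telescoping argument.
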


\begin{proof}
	First note that $0:_{G_M(\mathfrak{q})} a^{\star} = \oplus_{n \geq 0} (\mathfrak{q}^{n+1+c}M :_M a) \cap 
	\mathfrak{q}^nM /\mathfrak{q}^{n+1}M$. Its support is contained in $V(G_M(\mathfrak{q})_+)$ 
	if and only if it is a finitely generated $G_A(\mathfrak{q})/G_A(\mathfrak{q})_+ = A/\mathfrak{q}$-module. 
	This is equivalent to 
	$$
	[0:_{G_M(\mathfrak{q})} a^{\star}]_n = (\mathfrak{q}^{n+1+c}M :_M a) \cap 
	\mathfrak{q}^nM /\mathfrak{q}^{n+1}M = 0
	$$
	for all $n \gg 0$. Now the condition in (i) is equivalent to the fact that the support of 
	$0:_{G_M(\mathfrak{q})} a^{\star}$ is contained in $V(G_M(\mathfrak{q})_+)$. Whence the equivalence 
	of (i) and (ii) follows easily. 
\end{proof}

Note that in case of $c=1$ in the above statement the element $a \in A$ is called a superficial element 
of $\mathfrak{q}$ with respect to $M$ (see \cite[Section 8.5]{SH} for more information). 

\begin{corollary} \label{van-1}
	With the notation of \ref{van-0} suppose that $a^{\star} \notin \mathfrak{P} \text{ for all } 
	\mathfrak{P} \in \Ass G_{M}(\mathfrak{q}) \setminus V(G_A(\mathfrak{q})_+)$. Then 
	$a^{\star} \notin \mathfrak{P} \text{ for all } 
	\mathfrak{P} \in \Ass G_{M/0:_M a}(\mathfrak{q}) \setminus V(G_A(\mathfrak{q})_+)$
\end{corollary}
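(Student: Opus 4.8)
The plan is to reduce the whole statement to the colon-ideal criterion of Lemma \ref{van-0}. Write $N = 0:_M a$ and $\bar M = M/N$. Since $\bar M$ is again a finitely generated $A$-module and $a \in \mathfrak{q}^c$, the conclusion $a^{\star} \notin \mathfrak{P}$ for all $\mathfrak{P} \in \Ass G_{\bar M}(\mathfrak{q}) \setminus V(G_A(\mathfrak{q})_+)$ is, by the equivalence (i) $\Leftrightarrow$ (ii) of \ref{van-0} applied to $\bar M$, the same as producing integers $l',k'$ with $(\mathfrak{q}^{n+c}\bar M :_{\bar M} a) \cap \mathfrak{q}^{l'}\bar M = \mathfrak{q}^n \bar M$ for all $n \geq k'$. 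On the other hand, by hypothesis and \ref{van-0} applied to $M$ we already have integers $l \leq k$ with $(\mathfrak{q}^{m+c}M :_M a) \cap \mathfrak{q}^l M = \mathfrak{q}^m M$ for all $m \geq k$. I will show one may take $l' = l$ and $k' = k$.

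The first step is a \emph{power version} of the criterion for $M$: $(\mathfrak{q}^{n+2c}M :_M a^2) \cap \mathfrak{q}^l M = \mathfrak{q}^n M$ for all $n \geq k$. The inclusion $\supseteq$ is clear. For $\subseteq$, let $x$ lie in the left side; then $a^2 x \in \mathfrak{q}^{n+2c}M$ gives $ax \in \mathfrak{q}^{(n+c)+c}M :_M a$, and since $ax \in a\mathfrak{q}^l M \subseteq \mathfrak{q}^l M$ and $n+c \geq k$, the criterion for $M$ yields $ax \in \mathfrak{q}^{n+c}M$. Hence $x \in (\mathfrak{q}^{n+c}M :_M a) \cap \mathfrak{q}^l M = \mathfrak{q}^n M$, using the criterion once more with $n \geq k$.

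The second, main step transfers this to $\bar M$. Fix $n \geq k$ and let $\bar m \in (\mathfrak{q}^{n+c}\bar M :_{\bar M} a) \cap \mathfrak{q}^l \bar M$. Choose a representative $m \in \mathfrak{q}^l M$ of $\bar m$ (possible since $\mathfrak{q}^l \bar M$ is the image of $\mathfrak{q}^l M$). Then $am \in \mathfrak{q}^{n+c}M + N$, say $am = p + \nu$ with $p \in \mathfrak{q}^{n+c}M$ and $\nu \in N = 0:_M a$. Multiplying by $a$ and using $a\nu = 0$ gives $a^2 m = ap \in \mathfrak{q}^c\mathfrak{q}^{n+c}M = \mathfrak{q}^{n+2c}M$, so $m \in (\mathfrak{q}^{n+2c}M :_M a^2) \cap \mathfrak{q}^l M = \mathfrak{q}^n M$ by the power version above. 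Therefore $\bar m \in \mathfrak{q}^n \bar M$; the reverse inclusion $\mathfrak{q}^n \bar M \subseteq (\mathfrak{q}^{n+c}\bar M :_{\bar M} a) \cap \mathfrak{q}^l \bar M$ is immediate from $a\mathfrak{q}^n M \subseteq \mathfrak{q}^{n+c}M$ and $n \geq l$. This establishes condition (ii) of \ref{van-0} for $\bar M$, hence condition (i), which is the assertion.

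The only genuinely non-routine point — and the single idea of the argument — is the passage to $a^2$: passing to $\bar M = M/(0:_M a)$ destroys any regularity of $a$, so one cannot control the colon with $a$ alone; but multiplying the relation $am = p + \nu$ by $a$ kills the uncontrolled error term $\nu \in 0:_M a$ at the cost of replacing $a$ by $a^2$, and the $\mathfrak{q}$-adic colon criterion \ref{van-0} makes this elementary to exploit (morally: if $a^{\star}$ avoids the relevant associated primes, so does $(a^{\star})^2$). Everything else is bookkeeping with powers of $\mathfrak{q}$, the Artin–Rees-type stabilization being already absorbed into \ref{van-0}.
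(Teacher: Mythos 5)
Your proof is correct and takes essentially the same approach as the paper: translate the claim into the colon-ideal criterion of Lemma \ref{van-0}(ii) for $\bar{M} = M/(0:_M a)$ and kill the uncontrolled error term in $0:_M a$ by multiplying through by $a$ once more. The paper applies \ref{van-0}(ii) twice inline (at exponents $n+c$ and $n$, using the modular law to peel off $0:_M a$), while you package the same double application as a separate ``power version'' $(\mathfrak{q}^{n+2c}M :_M a^2) \cap \mathfrak{q}^l M = \mathfrak{q}^n M$ and choose a representative already lying in $\mathfrak{q}^l M$; the two are bookkeeping variants of the same argument, and your cleaner formulation incidentally sidesteps what appears to be a typo in the paper's displayed reduction (where $\mathfrak{q}^{n+2c}$ should read $\mathfrak{q}^{n+c}$).
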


\begin{proof}
	By virtue of the equivalent conditions in \ref{van-0} it will be enough to show that 
	\[
	(\mathfrak{q}^{n+2c}M, 0:_M a) :_M a\cap (\mathfrak{q}^l M, 0:_M a) = (\mathfrak{q}^n M, 0:_Ma) 
	\text{ for all } n \geq k.
	\]
	Let $m$ be an element of the left side. Then $am \in \mathfrak{q}^{n+2c}M :_Ma \cap \mathfrak{q}^lM = 
	\mathfrak{q}^{n+c}M$ as follows by the assumption. Therefore 
	$$
	m \in (\mathfrak{q}^{n+c}M :_M a) \cap (\mathfrak{q}^lM, 0:_M a) = 
	(\mathfrak{q}^{n+c} M:_M a \cap \mathfrak{q}^l M, 0:_M a) = (\mathfrak{q}^n M, 0:_M a)
	$$
	for all  $n \geq k$. Since the opposite inclusion is trivial the result follows by \ref{van-0}. 
\end{proof}

In the next we investigate the following condition for a system of elements $\au = a_1,\ldots,a_t$ 
with $a_i \in \mathfrak{q}^{c_i}, i = 1,\ldots,t$, namely 
\[
(\star) \hspace{.5cm} a_i^{\star} \notin \mathfrak{P} \text{ for all } 
\mathfrak{P} \in \Ass G_{M/(a_1,\ldots,a_{i-1})M}(\mathfrak{q}) 
\setminus V(G_A(\mathfrak{q})_+) \text{ for } i = 1,\ldots,t.
\]
In the case of $c_1 = \ldots= c_t = 1$ the condition ($\star$) is that of a superficial sequence 
of $\mathfrak{q}$ with respect to $M$ (see \cite{SH}).

\begin{theorem} \label{van-2}
	We fix the previous notation. Suppose that $\au = a_1,\ldots,a_t$ is a system of parameters satisfies the condition ($\star$). Then  $$\check{H}^t(\underline{a},\mathfrak{q},M;n) = H^t_{\underline{aT^c}}(R_M(\mathfrak{q}))_n = 0$$ 
	for all $n \gg 0$.
\end{theorem}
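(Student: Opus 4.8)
The strategy is to reduce to the vanishing of the top local cohomology $H^t_{\underline{aT^c}}(R_M(\mathfrak{q}))$ in high degrees by an induction on $t = \dim M$, using the element $a_1$ provided by condition ($\star$) as an $R_M(\mathfrak{q})$-regular-up-to-torsion element. First I would dispose of the torsion: by Lemma \ref{seq-1} we may pass from $M$ to $M/(0:_M\langle\au\rangle)$ without changing $H^i_{\underline{aT^c}}(R_M(\mathfrak{q}))$ for $i>0$, so assume $\mathcal{N}=0$, i.e. $\au$ is a filter-regular type sequence on the Rees module. In particular, since $\dim M = t \geq 1$, we have $\Ass G_M(\mathfrak{q}) \not\subseteq V(G_A(\mathfrak{q})_+)$, so condition ($\star$) for $i=1$ is meaningful and $a_1$ is (after the reduction) $M$-regular; Corollary \ref{van-1} guarantees that the analogue of ($\star$) is inherited by $M/a_1M$, so the induction hypothesis will apply to $M/a_1M$, which is a system of parameters of the $(t-1)$-dimensional module $M/a_1M$ with the sequence $a_2,\ldots,a_t$.

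\textbf{Key steps.} Consider the short exact sequence
\[
0 \to R_M(\mathfrak{q})(-c_1) \stackrel{a_1T^{c_1}}{\longrightarrow} R_M(\mathfrak{q}) \to R_M(\mathfrak{q})/(a_1T^{c_1})R_M(\mathfrak{q}) \to 0
\]
and apply $H^\bullet_{\underline{aT^c}}(\cdot)$. Since $H^i_{\underline{aT^c}} = 0$ for $i>t$, the top of the long exact sequence gives a surjection
\[
H^{t-1}_{\underline{aT^c}}\bigl(R_M(\mathfrak{q})/(a_1T^{c_1})R_M(\mathfrak{q})\bigr) \to H^t_{\underline{aT^c}}(R_M(\mathfrak{q}))(-c_1) \to 0
\]
together with the fact that $a_1T^{c_1}$ acts as a (split) surjection on $H^t_{\underline{aT^c}}(R_M(\mathfrak{q}))$. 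Now Lemma \ref{seq-2} identifies $H^{t-1}_{\underline{aT^c}}(R_M(\mathfrak{q})/(a_1T^{c_1})R_M(\mathfrak{q}))$ with $H^{t-1}_{\underline{aT^c}}(R_{M/a_1M}(\mathfrak{q}))$ up to a finitely generated $A/\mathfrak{q}$-module $\mathcal{M}$ which vanishes in degrees $n \gg 0$. By the inductive hypothesis applied to $M/a_1M$ with the system of parameters $a_2,\ldots,a_t$ (which satisfies ($\star$) by Corollary \ref{van-1}), we get $[H^{t-1}_{\underline{aT^c}}(R_{M/a_1M}(\mathfrak{q}))]_n = 0$ for all $n \gg 0$. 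Hence $[H^t_{\underline{aT^c}}(R_M(\mathfrak{q}))]_{n-c_1} = 0$ for all $n \gg 0$, which is the claim. The base case $t=0$ is immediate since then $\dim M = 0$, $M$ has finite length, and after the torsion reduction $M = 0$ so $R_M(\mathfrak{q}) = 0$.

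\textbf{Main obstacle.} The delicate point is that the inductive hypothesis is a statement about a module of \emph{lower dimension} with the \emph{truncated} sequence $a_2,\ldots,a_t$, so one must check carefully that $a_2,\ldots,a_t$ really is a system of parameters for $M/a_1M$ (this is where $a_1$ being a genuine superficial-type element, not merely part of a system of parameters, is used — cutting dimension by exactly one) and that condition ($\star$) transfers; this is exactly what Corollary \ref{van-1} and the definition of ($\star$) are designed to supply. A second subtlety is bookkeeping with the degree shifts by $c_1$ and with the finitely generated correction term $\mathcal{M}$ from Lemma \ref{seq-2}: one needs that ``$n \gg 0$'' can be chosen uniformly, but since all the modules involved are finitely generated graded modules over the Noetherian ring $R_A(\mathfrak{q})/\underline{aT^c}R_A(\mathfrak{q})$ (which is Artinian, being a quotient of $A/\au A$ by Theorem \ref{kos-5}-type reasoning), every graded piece vanishes for large degree and there is no accumulation problem. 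Finally, the first assertion $\check{H}^t(\au,\mathfrak{q},M;n) = H^t_{\underline{aT^c}}(R_M(\mathfrak{q}))_n$ is just the identification from Remark \ref{cech-4} / the construction in Proposition \ref{cech-2}, so the two displayed quantities in the theorem coincide and a single argument handles both.
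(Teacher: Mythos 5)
Your overall strategy — pass to the Rees module, kill the $\underline{aT^c}$-torsion via Lemma \ref{seq-1}, pick $a_1$ as a regular element using condition ($\star$), and run an induction on $t$ through the short exact sequence $0 \to R_M(\mathfrak{q})(-c_1) \xrightarrow{a_1T^{c_1}} R_M(\mathfrak{q}) \to R_M(\mathfrak{q})/(a_1T^{c_1})R_M(\mathfrak{q}) \to 0$ together with Lemma \ref{seq-2} — is exactly the paper's route, and your remarks about transferring ($\star$) to $M/a_1M$ are sound. However, the key deduction at the end of your ``Key steps'' paragraph is wrong. The long exact cohomology sequence does \emph{not} give a surjection
\[
H^{t-1}_{\underline{aT^c}}\bigl(R_M(\mathfrak{q})/(a_1T^{c_1})R_M(\mathfrak{q})\bigr) \to H^t_{\underline{aT^c}}(R_M(\mathfrak{q}))(-c_1) \to 0.
\]
What the long exact sequence actually gives is
\[
H^{t-1}_{\underline{aT^c}}(Q) \to H^t_{\underline{aT^c}}(R_M(\mathfrak{q}))(-c_1) \xrightarrow{a_1T^{c_1}} H^t_{\underline{aT^c}}(R_M(\mathfrak{q})) \to H^t_{\underline{aT^c}}(Q) \to 0,
\]
where $Q$ is the quotient, so the image of the connecting map is only the kernel of multiplication by $a_1T^{c_1}$, not all of $H^t(-c_1)$. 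Your parenthetical justification that ``$a_1T^{c_1}$ acts as a (split) surjection on $H^t$'' is also not established and in any case does not turn the kernel into the whole module; it is the Koszul cohomology $H^i(\au;M)$, not local cohomology, that is annihilated by $\au$.

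The missing idea is the torsion argument that the paper uses to close the induction: the vanishing $[H^{t-1}_{\underline{aT^c}}(Q)]_n = 0$ for $n \gg 0$ only yields that multiplication by $a_1T^{c_1}$ is \emph{injective} on $[H^t_{\underline{aT^c}}(R_M(\mathfrak{q}))]_n$ for $n \gg 0$ (and bijective if one also notes $H^t_{\underline{aT^c}}(Q) \cong H^t_{\underline{aT^c}}(R_{M/a_1M}(\mathfrak{q})) = 0$). One must then combine this with the fact that $\Supp H^t_{\underline{aT^c}}(R_M(\mathfrak{q})) \subseteq V(\underline{aT^c})$, so every element of $[H^t_{\underline{aT^c}}(R_M(\mathfrak{q}))]_n$ is annihilated by a power of $a_1T^{c_1}$; together with eventual injectivity this forces $[H^t_{\underline{aT^c}}(R_M(\mathfrak{q}))]_n = 0$ for $n \gg 0$. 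Without this torsion step the proof does not close. As a smaller point, your appeal to ``Theorem \ref{kos-5}-type reasoning'' to claim the graded pieces of $H^i_{\underline{aT^c}}(R_M(\mathfrak{q}))$ are controlled by a Noetherian quotient of $A/\au A$ is not valid for local cohomology (as opposed to Koszul cohomology), which is precisely why the paper proves Artinianness separately in Theorem \ref{seq-4}; but in the corrected argument you do not actually need that uniformity claim.
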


\begin{proof}
	First consider the following short exact sequence 
	\[
	0 \to \mathcal{N} %= 0:_{R_M{\mathfrak}} \langle a_1T^{c_1} \rangle 
	\to R_M(\mathfrak{q}) \to 
	R_{M/0:_M \langle a_1\rangle}(\mathfrak{q}) \to 0,
	\]
	where $\mathcal{N} = 0:_{R_M(\mathfrak{q})}  \langle a_1T^{c_1} \rangle$. 
	Recall that $[0:_{R_M(\mathfrak{q})} \langle a_1T^{c_1}\rangle]_n = (0:_M \langle a_1\rangle) \cap \mathfrak{q}^n M$ 
	for all $n \geq 0$. By base change on the local cohomology it follows that 
	$H^t_{\underline{aT^c}}(\mathcal{N}) = 0$. Note that $\mathcal{N}$ is annihilated 
	by a power of $(a_1 T^{c_1})$. By the long exact cohomology sequence it follows that 
	$$
	H^t_{\underline{aT^c}}(R_M(\mathfrak{q})) \cong 
	H^t_{\underline{aT^c}}(R_{M/0:_M \langle a_1 \rangle}(\mathfrak{q})).
	$$ 
	Therefore, we may assume that $a_1$ is $M$-regular as follows by an iterated use of \ref{van-1} 
	because of $0:_M \langle a_1 \rangle = 0:_M a^l_1$ for a certain large integer $l$. 

    If $t = 1$, then $\dim_A M = 1$ and $M/a_1M$ is of finite length. Then $[R_M(\mathfrak{q})/(a_1T^{c_1})R_M(\mathfrak{q})]_n =\mathfrak{q}^nM/a_1\mathfrak{q}^{n-c_1}M$. 
    By the assumption and since $a \in A$ is $M$-regular it follows that 
    $a_1M \cap \mathfrak{q}^n M = a_1 \mathfrak{q}^{n-c_1}M$ for all $n \gg 0$. Since $M/a_1M$ is of 
    finite length we get that $\mathfrak{q}^n M \subset a_1M$ for all $n \gg 0$, so that $\mathfrak{q}^n M = a_1\mathfrak{q}^{n-c_1}M$ 
    for all $n \gg 0$. Therefore $[R_M(\mathfrak{q})/(a_1T^{c_1})R_M(\mathfrak{q})]_n = 0$ 
    for all $n \gg 0$.  
    
	Now we proceed by induction on $t$. If $t = 1$ the short exact sequence in the statement 
	of 	\ref{seq-2} yields $[H^{t-1}_{\underline{aT^{c}}}(R_M(\mathfrak{q})/(a_1T^{c_1})R_M(\mathfrak{q}))]_n = 0$ 
	for all $n \gg 0$. By induction hypothesis this holds for $t > 1$ by view of the 
	isomorphism in the statement in \ref{seq-2}. Now we show $[H^t_{\underline{aT^c}}(R_M(\mathfrak{q}))]_n = 0$
	for $n \gg 0$.
	To this end recall the short exact sequence 
	$$
	0 \to R_M(\mathfrak{q})(-c_1) \stackrel{a_1T^{c_1}}{\to} R_M(\mathfrak{q}) \to R_M(\mathfrak{q})/(aT^c)R_M(\mathfrak{q}) \to 0.
	$$
	By the vanishing of $[H^{t-1}_{\underline{aT^{c}}}(R_M(\mathfrak{q})/(a_1T^{c_1})R_M(\mathfrak{q}))]_n$ 
	for $n \gg 0$ it implies the bijectivity 
	\[
	H^t_{\underline{aT^c}}(R_M(\mathfrak{q}))(-c_1) \stackrel{a_1T^{c_1}}{\longrightarrow} H^t_{\underline{aT^c}}(R_M(\mathfrak{q}))
	\]
	in large degrees. Assume now there is an $0 \not= r \in [H^t_{\underline{aT^c}}(R_M(\mathfrak{q})]_n$ 
	for a large $n$. Because $\Supp H^t_{\underline{aT^c}}(R_M(\mathfrak{q})) \subseteq V(\underline{aT^c})$ 
	there is an integer $m$ such that $(a_1 T^{c_1})^m \cdot r = 0$. By the bijectivity this implies that $r =0$, 
	a contradiction. This completes the inductive step. 
\end{proof}

As a consequence of the previous result we have the following corollary.

\begin{corollary} \label{van-3} 
	We fix the previous notation. Suppose that $\au = a_1,\ldots,a_t$ is a system of parameters satisfies the condition ($\star$). Then 
	\[
	H^t_{\au}(M) \cong \check{L}^t(\au,\mathfrak{q},M;n)
	\]
	for all $n \gg 0$. In particular, if in addition $H^t_{\au}(M) \not= 0$, then 
	$\check{L}^t(\au,\mathfrak{q},M;n) \not= 0$ for all $n \gg 0$. 
\end{corollary}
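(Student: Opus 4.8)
The plan is to deduce the corollary directly from Theorem \ref{van-2} by feeding it into the long exact cohomology sequence attached to the short exact sequence of complexes in \ref{cech-1} (B),
\[
0 \to \check{C}^{\bullet}(\au,\mathfrak{q},M;n) \to \check{C}_{\au} \otimes_A M \to
\check{\mathcal{L}}^{\bullet}(\au,\mathfrak{q},M;n) \to 0.
\]
Passing to cohomology and using $H^i(\check{C}_{\au} \otimes_A M) \cong H^i_{\au}(M)$ (see \ref{cech} (C)) yields the long exact sequence
\[
\ldots \to \check{H}^i(\au,\mathfrak{q},M;n) \to H^i_{\au}(M) \to \check{L}^i(\au,\mathfrak{q},M;n) \to \check{H}^{i+1}(\au,\mathfrak{q},M;n) \to \ldots
\]
already recorded in the Introduction.

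First I would observe that all three complexes are concentrated in cohomological degrees $0,\ldots,t$, so $\check{H}^{t+1}(\au,\mathfrak{q},M;n) = 0$, and therefore the tail of the long exact sequence reads
\[
\check{H}^t(\au,\mathfrak{q},M;n) \to H^t_{\au}(M) \to \check{L}^t(\au,\mathfrak{q},M;n) \to 0.
\]
Next, since $\au$ is a system of parameters satisfying $(\star)$, Theorem \ref{van-2} applies and gives $\check{H}^t(\au,\mathfrak{q},M;n) = 0$ for all $n \gg 0$. Combining the two displays, the map $H^t_{\au}(M) \to \check{L}^t(\au,\mathfrak{q},M;n)$ is an isomorphism for all such $n$, which is the asserted isomorphism. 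The final assertion is then immediate: if $H^t_{\au}(M) \neq 0$, then $\check{L}^t(\au,\mathfrak{q},M;n) \neq 0$ for all $n \gg 0$.

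Since the substantive content — the vanishing $\check{H}^t(\au,\mathfrak{q},M;n) = 0$ for $n \gg 0$ — has already been carried out in Theorem \ref{van-2} (where condition $(\star)$ and the local cohomology of the Rees module $R_M(\mathfrak{q})$ with respect to $\underline{aT^c}$ do the real work), the only step here is the purely formal passage through the long exact sequence, and I do not anticipate any obstacle; one just has to be mindful that the threshold ``$n \gg 0$'' is the one supplied by \ref{van-2}.
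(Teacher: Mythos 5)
Your proposal is correct and follows essentially the same route as the paper: both pass to the long exact cohomology sequence of the short exact sequence of \v{C}ech-type complexes, observe that the relevant map $H^t_{\au}(M) \to \check{L}^t(\au,\mathfrak{q},M;n)$ is surjective because the complexes vanish beyond degree $t$, and then invoke Theorem \ref{van-2} to kill the term $\check{H}^t(\au,\mathfrak{q},M;n)$ on the left for $n \gg 0$. You simply spell out more explicitly what the paper leaves as a one-line deduction.
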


\begin{proof}
	The short exact sequence of complexes as shown in \ref{cech-2} provides an exact sequence 
	\[
	H^t_{\underline{aT^c}}(R_M(\mathfrak{q}))_n \to H^t_{\au}(M) \to \check{L}^t(\au,\mathfrak{q},M;n) \to 0.
	\]	
	By virtue of Theorem \ref{van-2} the claim follows. 
\end{proof}

\section{Non Vanishing}

In this section let $\au = a_1,\ldots,a_t$ be a system of elements of a local ring $(A,\mathfrak{m},\Bbbk)$ 
that generates an $\mathfrak{m}$-primary ideal. Then $\mathfrak{q}$ is also an $\mathfrak{m}$-primary ideal. 
Let $M$ denote a finitely generated $A$-module. By view of \ref{seq-3} we may assume that $\dim A = \dim M$. By view 
of \ref{cech-3} we may assume that $\au$ is a system of parameters of $M$. Then it follows 
that $\check{L}^i(\au,\mathfrak{q},M;n) = 0$ for $i > \dim M$ and all $n \in \mathbb{Z}$. In this 
section we will investigate $\check{L}^t(\au,\mathfrak{q},M;n)$ for $t = \dim M$. 

\begin{definition} \label{nonv-1}
	Let $a \in \mathfrak{q}^c$. For an integer $n \geq 0$ it follows 
	$$
	\mathfrak{q}^n (\mathfrak{q}^c)^l M :_M a^l \subseteq 	\mathfrak{q}^n (\mathfrak{q}^c)^{l+1} M :_M a^{l+1}
	$$ 
	for all $l \geq 0$. We denote the stable value by $(\mathfrak{q}^nM)^{aA}$. Then $\mathfrak{q}^n M 
	\subseteq (\mathfrak{q}^n M)^{aA}$ and $0:_M \langle a \rangle 
	\subseteq (\mathfrak{q}^nM)^{aA}$
	for all $n \geq 1$. Moreover it is easily seen that $\mathfrak{q}^n M[\mathfrak{q}^c/a] \cap M = 
	(\mathfrak{q}^n M)^{aA}$ for all $n \geq 1$.
\end{definition}

Then we have the following result.

\begin{proposition} \label{nonv-2}
	With the previous notation the following conditions are equivalent:
	\begin{itemize}
		\item[(i)] $(\mathfrak{q}M)^{aA} =M$.
		\item[(ii)] $(\mathfrak{q}^k M)^{aA} = M$ for all $k \geq 1$.
		\item[(iii)] $(\mathfrak{q}^k M)^{aA} = M$ for some $ k \geq 1$.
	\end{itemize}
\end{proposition}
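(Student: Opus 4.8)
The plan is to reduce everything to the single implication (i)$\Rightarrow$(ii). First I would record the monotonicity $(\mathfrak{q}^{k+1}M)^{aA} \subseteq (\mathfrak{q}^{k}M)^{aA}$: indeed $\mathfrak{q}^{k+1+lc}M :_M a^l \subseteq \mathfrak{q}^{k+lc}M :_M a^l$ for every $l$, and passing to stable values (equivalently, taking the union over $l$) gives the inclusion. Consequently $(\mathfrak{q}^kM)^{aA} = M$ for some $k \geq 1$ forces $(\mathfrak{q}^jM)^{aA} = M$ for all $1 \leq j \leq k$, since $(\mathfrak{q}^kM)^{aA}\subseteq(\mathfrak{q}^jM)^{aA}\subseteq M$. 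This makes (ii)$\Rightarrow$(iii) and (iii)$\Rightarrow$(i) immediate, so only (i)$\Rightarrow$(ii) requires an argument.

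For (i)$\Rightarrow$(ii) the point is to convert the hypothesis into a uniform containment and then iterate. Since $A$ is Noetherian and $M$ finitely generated, the ascending chain $\{\mathfrak{q}^{1+lc}M :_M a^l\}_{l\geq 0}$ defining $(\mathfrak{q}M)^{aA}$ stabilises; hence $(\mathfrak{q}M)^{aA} = M$ means there is an integer $L$ with $a^L M \subseteq \mathfrak{q}^{1+Lc}M$. (Equivalently, one may take $L$ to be the maximum of the finitely many exponents attached to a fixed generating set of $M$, using $a\in\mathfrak{q}^c$ to raise all of them to $L$.) Now I would iterate this inclusion: because $a^L\,\mathfrak{q}^{1+Lc}M = \mathfrak{q}^{1+Lc}(a^LM) \subseteq \mathfrak{q}^{2(1+Lc)}M$, an easy induction on $k$ yields $a^{kL}M \subseteq \mathfrak{q}^{k(1+Lc)}M \subseteq \mathfrak{q}^{k+(kL)c}M$. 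Therefore $M \subseteq \mathfrak{q}^{k+(kL)c}M :_M a^{kL} \subseteq (\mathfrak{q}^kM)^{aA}$, i.e.\ $(\mathfrak{q}^kM)^{aA} = M$ for every $k\geq 1$, which is (ii).

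An essentially equivalent route is to argue inside $M_a$: with $B = A[\mathfrak{q}^c/a]$ and $N = M[\mathfrak{q}^c/a]$, Lemma \ref{dirlim} together with \ref{nonv-1} identifies $(\mathfrak{q}^nM)^{aA}$ with the preimage of $\mathfrak{q}^nN$ under the natural map $\pi\colon M \to M_a$, so (i) says $\pi(M)\subseteq\mathfrak{q}N$; since $N = B\cdot\pi(M)$ and each $\mathfrak{q}^nN$ is a $B$-submodule of $N$, one gets $N = B\pi(M)\subseteq B(\mathfrak{q}N) = \mathfrak{q}N\subseteq N$, whence $N = \mathfrak{q}N = \mathfrak{q}^kN$ and (ii) follows. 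I expect the only genuine obstacle to be the first step of the main argument --- producing the uniform containment $a^LM\subseteq\mathfrak{q}^{1+Lc}M$ from the \emph{a priori} pointwise hypothesis --- which is exactly where finiteness of $M$ (and the Noetherian hypothesis) enters; the iteration afterwards is routine bookkeeping with powers of $\mathfrak{q}$.
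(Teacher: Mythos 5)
Your proof is correct and the core step (i)$\Rightarrow$(ii) — extracting $a^L M \subseteq \mathfrak{q}^{1+Lc}M$ from the stable value and iterating to get $a^{kL}M\subseteq\mathfrak{q}^{k+kLc}M$ — is exactly the paper's argument. Your use of the monotonicity $(\mathfrak{q}^{k+1}M)^{aA}\subseteq(\mathfrak{q}^kM)^{aA}$ to dispatch (iii)$\Rightarrow$(i) is a minor cosmetic reorganization of the paper's direct observation that $\mathfrak{q}^k\subseteq\mathfrak{q}$, and the alternative route through $N=M[\mathfrak{q}^c/a]$ as a $B$-module is a pleasant addition but not needed.
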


\begin{proof}
	If (i) is satisfied, then $a^lM \subseteq \mathfrak{q}^{1+cl}M$ for an integer $l \geq 1$. This implies 
	$a^{kl}M \subseteq \mathfrak{q}^k (\mathfrak{q}^c)^{kl}M$ for al $k \geq 1$. This yields 
	$M \subseteq \mathfrak{q}^k (\mathfrak{q}^c)^{kl}M :_M a^{kl} \subseteq (\mathfrak{q}^kM)^{aA}$ which 
	proves (ii). Since (ii) $\Longrightarrow$ (iii) is trivially true we have to show (iii) $\Longrightarrow$ (i). 
	Let $m \in M$ denote an arbitrary element. Then 
	$$
	a^l m \in \mathfrak{q}^k (\mathfrak{q}^{c})^l M \subseteq \mathfrak{q} (\mathfrak{q}^{c})^l M
	$$ 
	for a certain $ l \in \mathbb{N}$. That is $m \in (\mathfrak{q}M)^{aA}$ and the proof is finished. 
\end{proof}

\begin{lemma} \label{nonv-3}
	With the previous notation the following conditions are equivalent:
	\begin{itemize}
		\item[(i)] $\check{L}^1(a,\mathfrak{q},M;1) =0$.
		\item[(ii)] $\check{L}^1(a,\mathfrak{q},M;n) =0$ for all $n \geq 1$.
		\item[(iii)] $\check{L}^1(a,\mathfrak{q},M;n) =0$ for some $n \geq 1$.
		\item[(iv)] $(\mathfrak{q}M)^{aA} =M$.
	\end{itemize}
\end{lemma}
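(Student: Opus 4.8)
The plan is to collapse everything to a single two-term complex and then use that multiplication by $a$ becomes invertible after inverting $a$. First I would record the shape of the relevant complexes for the one-element sequence $a\in\mathfrak{q}^c$: by \ref{cech-1} the subcomplex $\check{C}^{\bullet}(a,\mathfrak{q},M;n)$ of $\check{C}_a\otimes_A M$ (which is $0\to M\to M_a\to 0$) is $0\to \mathfrak{q}^nM\to \mathfrak{q}^nM[\mathfrak{q}^c/a]\to 0$, so the factor complex $\check{\mathcal{L}}^{\bullet}(a,\mathfrak{q},M;n)$ is $0\to M/\mathfrak{q}^nM\xrightarrow{\phi_n} M_a/\mathfrak{q}^nM[\mathfrak{q}^c/a]\to 0$, where $\phi_n$ is induced by the localization map $M\to M_a$. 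Writing $D_n:=M_a/\mathfrak{q}^nM[\mathfrak{q}^c/a]$, I then have $\check{L}^1(a,\mathfrak{q},M;n)=\operatorname{coker}\phi_n$, a quotient of $D_n$; and by \ref{nonv-1} the submodule $(\mathfrak{q}^nM)^{aA}$ is exactly the preimage of $\mathfrak{q}^nM[\mathfrak{q}^c/a]$ under $M\to M_a$. The first thing to prove is then
\[
D_n=0\iff M_a=\mathfrak{q}^nM[\mathfrak{q}^c/a]\iff (\mathfrak{q}^nM)^{aA}=M.
\]
The only implication with content is $(\mathfrak{q}^nM)^{aA}=M\Rightarrow M_a=\mathfrak{q}^nM[\mathfrak{q}^c/a]$: by \ref{nonv-2} the hypothesis upgrades to $(\mathfrak{q}^NM)^{aA}=M$ for \emph{every} $N\geq 1$, so, given $m/a^j\in M_a$, one picks $l$ with $a^lm\in\mathfrak{q}^{n+jc+lc}M$ and observes $m/a^j=a^lm/a^{l+j}\in\mathfrak{q}^{n+(l+j)c}M/a^{l+j}\subseteq\mathfrak{q}^nM[\mathfrak{q}^c/a]$.

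Granting this, the four conditions close up into the cycle (iv)$\Rightarrow$(ii)$\Rightarrow$(i)$\Rightarrow$(iii)$\Rightarrow$(iv). For (iv)$\Rightarrow$(ii): by \ref{nonv-2}, condition (iv) yields $(\mathfrak{q}^nM)^{aA}=M$ for every $n\geq 1$, hence $D_n=0$ for every $n$, hence $\check{L}^1(a,\mathfrak{q},M;n)=\operatorname{coker}\phi_n=0$ for every $n$. The implications (ii)$\Rightarrow$(i) and (i)$\Rightarrow$(iii) are trivial specializations of $n$. For the remaining step (iii)$\Rightarrow$(iv), suppose $\check{L}^1(a,\mathfrak{q},M;n)=0$ for some $n\geq 1$. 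Then $\phi_n$ is surjective, so $D_n$, being a homomorphic image of the finitely generated $A$-module $M/\mathfrak{q}^nM$, is itself finitely generated. On the other hand $a/1$ is a unit in $A_a$, so multiplication by $a$ is bijective on $M_a$; since $\mathfrak{q}^nM[\mathfrak{q}^c/a]$ is stable under multiplication by $a$ (immediate from the description $\mathfrak{q}^nM[\mathfrak{q}^c/a]=\varinjlim_k\{\mathfrak{q}^{n+kc}M,a\}$ in \ref{dirlim}), multiplication by $a$ is surjective on $D_n$, i.e.\ $aD_n=D_n$. As $a\in\mathfrak{q}\subseteq\mathfrak{m}$, Nakayama's lemma forces $D_n=0$, hence $(\mathfrak{q}^nM)^{aA}=M$ by the displayed equivalence, and finally $(\mathfrak{q}M)^{aA}=M$ by \ref{nonv-2}; this is (iv).

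The step I expect to be the main obstacle is the equivalence $D_n=0\iff(\mathfrak{q}^nM)^{aA}=M$: one implication is formal, but the other genuinely uses the stability statement \ref{nonv-2}, because knowing $M/1\subseteq\mathfrak{q}^nM[\mathfrak{q}^c/a]$ for a single exponent $n$ does not let one clear a denominator $a^j$ with $j$ large. Everything else is routine --- the description of the two complexes, the finite generation of $D_n$ once $\check{L}^1$ vanishes, and the one-line Nakayama argument. The hypothesis genuinely used is that $a$ is a non-unit: the Nakayama step needs $a\in\mathfrak{m}$, which holds since $a\in\mathfrak{q}^c\subseteq\mathfrak{q}\subseteq\mathfrak{m}$; if $a$ were a unit, $\phi_n$ would be an isomorphism, so $\check{L}^1(a,\mathfrak{q},M;n)$ would vanish for every $n$ while $(\mathfrak{q}M)^{aA}=\mathfrak{q}M$ would fail to be $M$ in general, and the assertion would be false.
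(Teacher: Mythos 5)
Your proof is correct and reaches the same equivalence by essentially the same circle of ideas (the submodule $(\mathfrak{q}^nM)^{aA}$, the stability result \ref{nonv-2}, and Nakayama), but the execution differs in a way worth noting. The paper keeps the direct-system description $\check{L}^1(a,\mathfrak{q},M;n)\cong\varinjlim\{M/(a^kM+\mathfrak{q}^{n+kc}M),a\}$, unwinds vanishing of the direct limit into the inclusion $M\subseteq a^kM+(\mathfrak{q}^{n+kc}(\mathfrak{q}^c)^lM:_Ma^l)$, and applies Nakayama to the finitely generated module $M$ itself to kill the $a^kM$ term, landing on $(\mathfrak{q}^{n+kc}M)^{aA}=M$. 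You instead pass immediately to the two-term limit complex $0\to M/\mathfrak{q}^nM\xrightarrow{\phi_n} D_n\to 0$ with $D_n=M_a/\mathfrak{q}^nM[\mathfrak{q}^c/a]$, observe $\check{L}^1=\operatorname{coker}\phi_n$, and apply Nakayama to $D_n$: once $\phi_n$ is surjective $D_n$ is finitely generated, while $aD_n=D_n$ always holds since $a$ is a unit on $M_a$, so $D_n=0$. The ingredient you correctly flagged as the one with content --- that $(\mathfrak{q}^nM)^{aA}=M$ forces the whole $M_a$ (not merely the image of $M$) to sit in $\mathfrak{q}^nM[\mathfrak{q}^c/a]$, which needs the upgrade in \ref{nonv-2} to clear denominators --- is handled silently in the paper because working with the direct system of $M/(a^kM+\mathfrak{q}^{n+kc}M)$ never requires you to talk about $M_a$ at all. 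Your route is a bit more structural and isolates the role of the auxiliary module $D_n$; the paper's is more computational but avoids introducing $D_n$. Both are sound.
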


\begin{proof}
	By the definition we have that $\check{L}^1(a,\mathfrak{q},M;n) \cong \varinjlim \{M/(a^k,\mathfrak{q}^{n+kc})M,a\}$, where the maps in the direct system are given by multiplication by $a$. 
	Then the direct limit is zero if and only if for each $k \geq 1$ there is an integer $l$ such that 
	$a^lM \subseteq (a^{k+l}M, \mathfrak{q}^{n+(k+l)c}M)$. This holds if and only if 
	$M \subseteq (a^{k}M, \mathfrak{q}^{n+kc} (\mathfrak{q}^c)^lM :_M a^l)$ and by Nakayama Lemma if and only if 
	$M = \mathfrak{q}^{n+kc} (\mathfrak{q}^c)^lM :_M a^l$. But this is equivalent to 
	$M = (\mathfrak{q}^{n+kc}M)^{aA}$. Then the equivalence of all the conditions of the statement 
	is a consequence of \ref{nonv-2}.
\end{proof}

\begin{remark} \label{nonv-4}
	With the previous notation it is not clear to us when $(\mathfrak{q}M)^{aA}$ is a proper submodule 
	of $M$. Suppose that $a^\star \notin \mathfrak{P}$ for all $\mathfrak{P} \in \Ass G_M(\mathfrak{q}) 
	\setminus V(G_A(\mathfrak{q})_+)$ and that $a$ is $M$-regular. Then $\mathfrak{q}^{n+c}M :_M a = 
	\mathfrak{q}^nM$ for all $n \gg 0$. This implies that $\mathfrak{q}^n (\mathfrak{q}^c)^l M:_M a^l = 
	\mathfrak{q}^n M$ for all $n \gg 0$. That is $(\mathfrak{q}^nM)^{aA} = \mathfrak{q}^n M$ for all $n \gg 0$ 
	and $\check{L}^1(a,\mathfrak{q},M;n) \not=0$ for all $n \geq 1$ (see \ref{nonv-2}). 
	Moreover, if $(\mathfrak{q}^nM)^{aA} = \mathfrak{q}^n M$ for all $n \gg 0$ then it follows also 
	$0:_M a = 0$ and $\mathfrak{q}^{n+c}M :_M a = \mathfrak{q}^n M$ for all $n \gg 0$ as easily seen. 
\end{remark}

Now we are ready to prove the non-vanishing result.

\begin{theorem} \label{nonv-5}
	With the previous notation let $\au = a_1,\ldots,a_t \subset \mathfrak{q}$ denote a system of parameters 
	of $M$. Then $\check{L}^t(\au,\mathfrak{q},M;n) \not= 0$ for all $n \gg 0$.
\end{theorem}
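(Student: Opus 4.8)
The plan is to reduce the statement to Corollary~\ref{van-3}, where the non-vanishing has already been proved under the additional hypothesis $(\star)$. Two observations make this possible. First, $H^t_{\au}(M) = H^t_{\mathfrak{m}}(M) \neq 0$: the equality holds because $\au A$ is $\mathfrak{m}$-primary, and the non-vanishing is Grothendieck's non-vanishing theorem, valid since $t = \dim M$. Second, by Lemma~\ref{cech-3} the module $\check{L}^t(\au,\mathfrak{q},M;n)$ is, up to isomorphism, unchanged if $\au$ is replaced by any system $\underline{b} = b_1,\ldots,b_s$ of elements with $b_j \in \mathfrak{q}^{d_j}$ and $\Rad \underline{b}A = \Rad \au A = \mathfrak{m}$. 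So it suffices to exhibit one system of parameters $\underline{b} \subset \mathfrak{q}$ of $M$ satisfying $(\star)$: for such a $\underline{b}$, Lemma~\ref{cech-3} gives $\check{L}^t(\au,\mathfrak{q},M;n) \cong \check{L}^t(\underline{b},\mathfrak{q},M;n)$, and Corollary~\ref{van-3} applied to $\underline{b}$ yields $\check{L}^t(\underline{b},\mathfrak{q},M;n) \cong H^t_{\underline{b}}(M) = H^t_{\mathfrak{m}}(M) \neq 0$ for all $n \gg 0$.

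The only thing that can obstruct the existence of such a $\underline{b}$ is a finite residue field: a superficial sequence for $\mathfrak{q}$ with respect to $M$ of length $t$ is exactly a sequence $\underline{b} = b_1,\ldots,b_t \in \mathfrak{q}$ satisfying $(\star)$ with $d_1 = \ldots = d_t = 1$, and, since $\mathfrak{q}$ is $\mathfrak{m}$-primary and $\dim M = t$, the genericity built into the choice of the $b_i$ forces them to avoid the finitely many top-dimensional primes of the successive quotients, so $\underline{b}$ is automatically a system of parameters --- but superficial elements need not exist when $\Bbbk$ is finite. To circumvent this I would first pass to $A' = A[X]_{\mathfrak{m}A[X]}$, a faithfully flat local extension with infinite residue field $\Bbbk(X)$. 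Since $\check{\mathcal{L}}^{\bullet}(\au,\mathfrak{q},M;n)$ is the cokernel of an inclusion of bounded complexes whose terms are finite direct sums of localizations of $M$ and of the submodules $\mathfrak{q}^n M[\mathfrak{q}^{c}/a] = \varinjlim_k \mathfrak{q}^{n+kc}M$ (Lemma~\ref{dirlim}), all of which commute with $-\otimes_A A'$, one obtains $\check{L}^t(\au,\mathfrak{q},M;n)\otimes_A A' \cong \check{L}^t(\au A',\mathfrak{q}A',M\otimes_A A';n)$, and faithful flatness reduces the non-vanishing over $A$ to that over $A'$. One checks routinely that $\mathfrak{q}A'$ is $\mathfrak{m}A'$-primary, that $\au$ remains a system of parameters of $M\otimes_A A'$, and that $\dim M\otimes_A A' = t$, so nothing is lost. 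Over $A'$ one then chooses $\underline{b}$ a superficial system of parameters of $\mathfrak{q}A'$ with respect to $M\otimes_A A'$ and applies the previous paragraph.

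I expect the main obstacle to be the bookkeeping around the base change $A \to A'$: verifying carefully that it is compatible with the Rees module $R_M(\mathfrak{q})$, with the complexes $\check{C}^{\bullet}$ and $\check{\mathcal{L}}^{\bullet}$, and with condition $(\star)$ (so that a superficial system of parameters chosen over $A'$ genuinely does the job). Everything else --- the existence of a superficial system of parameters once the residue field is infinite, and the final two lines invoking Lemma~\ref{cech-3}, Corollary~\ref{van-3}, and Grothendieck non-vanishing --- is then routine.
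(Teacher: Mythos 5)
Your argument is correct and follows the same skeleton as the paper's proof: reduce via Lemma~\ref{cech-3} to a system $\underline{b}$ satisfying condition $(\star)$, invoke Corollary~\ref{van-3} to identify $\check{L}^t(\underline{b},\mathfrak{q},M;n)$ with $H^t_{\underline{b}}(M) = H^t_{\mathfrak{m}}(M)$ for $n \gg 0$, and conclude by Grothendieck non-vanishing. The one place you diverge is the finite-residue-field detour. You identify $(\star)$ with a superficial sequence, i.e.\ you insist on $d_1 = \ldots = d_t = 1$, and then legitimately observe that such sequences may fail to exist when $\Bbbk$ is finite, whence the pass to $A' = A[X]_{\mathfrak{m}A[X]}$. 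But $(\star)$ as stated in the paper allows $b_i \in \mathfrak{q}^{d_i}$ with the initial degrees $d_i$ completely free, and with that freedom existence is unconditional: at each step one needs a homogeneous element of $G_A(\mathfrak{q})_+$ (of \emph{some} degree $d_i$) whose initial form avoids the finitely many homogeneous associated primes of $G_{M/(b_1,\ldots,b_{i-1})M}(\mathfrak{q})$ not containing $G_A(\mathfrak{q})_+$, and this is exactly homogeneous prime avoidance (e.g.\ \cite[Lemma 1.5.11]{BH}), which holds over an arbitrary residue field precisely because the degree is allowed to vary. The paper relies on this (it is what is invoked implicitly in the proof of Theorem~\ref{seq-4} with the phrase ``by prime avoidance''), so the gen\-eric\-\mbox{ity}/flatness bookkeeping you flag as the main work can simply be skipped. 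Your route is not wrong --- the base change argument, once carried out, would give the same conclusion --- but it is a heavier tool for a problem the flexible degrees in $(\star)$ already solve, and the reduction $\Ann M = 0$ via Lemma~\ref{seq-3} (which the paper performs, you omit) is what makes $\dim G_M(\mathfrak{q}) = t$ so that avoiding the relevant primes genuinely drops the dimension at each step and forces $\underline{b}$ to be a system of parameters.
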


\begin{proof}
	By view of \ref{seq-3} we may assume that $\Ann M = 0$. Then $\dim G_M(\mathfrak{q}) = t$. Inductively 
	we choose a system of elements $\underline{b} = b_1,\ldots,b_t$ such that $\underline{b}$ satisfies 
	the condition ($\star$) above. Then $\underline{b}$ is a system of parameters of $M$ and 
	$\check{L}^t(\au,\mathfrak{q},M;n) \cong \check{L}^t(\underline{b},\mathfrak{q},M;n)$ (see \ref{cech-3}) .
	Then by \ref{van-3} it follows that $0 \not= H^t_{\underline{b}}(M) \cong \check{L}^t(\au,\mathfrak{q},M;n)$ 
	for all $n \gg 0$. Note that the non-vanishing of $H^t_{\underline{b}}(M)$ follows by virtue of 
	\cite{aG} since $\dim M = t$ and $\underline{b}$ generates an $\mathfrak{m}$-primary ideal. 
\end{proof}

\end{document}